\def\version{02/02/2012 \quad Version 5
\hfill
\href{http://arxiv.org/abs/1005.5370}{arXiv:1005.5370}}
\theoremstyle{plain}
\newtheorem{thm}{Theorem}[section]
\newtheorem{lem}[thm]{Lemma}
\newtheorem{prop}[thm]{Proposition}
\newtheorem{cor}[thm]{Corollary}
\theoremstyle{definition}
\newtheorem{rem}[thm]{Remark}
\newtheorem{defn}[thm]{Definition}
\newtheorem{ex}[thm]{Example}
\numberwithin{equation}{section}
\def\ie{\emph{i.e.}}
\def\:{\colon}
\def\.{\cdot}
\def\<{\left\langle}
\def\>{\right\rangle}
\def\({\left(}
\def\){\right)}
\def\ph#1{\phantom{#1}}
\def\epsilon{\varepsilon}
\def\phi{\varphi}
\def\leq{\leqslant}
\def\geq{\geqslant}
\def\lra{\longrightarrow}
\def\Lra{\Longrightarrow}
\def\bar#1{\overline{#1}}
\def\tilde#1{\widetilde{#1}}
\def\iso{\cong}
\DeclareMathOperator{\Ker}{Ker}\renewcommand{\ker}{\Ker}
\DeclareMathOperator{\rank}{rank}
\def\F{\mathbb{F}}
\def\H{\mathbb{H}}
\def\k{{\boldsymbol{k}}}%{\Bbbk}
\def\Q{\mathbb{Q}}
\def\R{\mathbb{R}}
\def\Z{\mathbb{Z}}
\DeclareMathOperator{\End}{End}
\DeclareMathOperator{\Ext}{Ext}
\DeclareMathOperator{\Hom}{Hom}
\DeclareMathOperator{\THH}{THH}
\DeclareMathOperator{\Tor}{Tor}
\DeclareMathOperator{\Map}{Map}
\def\id{\mathrm{id}}
\def\op{\mathrm{o}}
\DeclareMathOperator{\Br}{Br}
\DeclareMathOperator{\Syl}{Syl}
\DeclareMathOperator{\Gal}{Gal}
\DeclareMathOperator{\Az}{Az}
\def\nr{\mathrm{nr}}
\def\Fc{\bar{\F}}
\def\Enr{E^\nr}
\def\Knr{K^\nr}
\DeclareMathOperator{\Tot}{Tot}
\begin{document}
\title[Brauer groups for $S$-algebras]
{Brauer groups for commutative $S$-algebras}
\author{Andrew Baker \and Birgit Richter \and Markus Szymik }
\address{School of Mathematics \& Statistics, University
of Glasgow, Glasgow G12 8QW, Scotland.}
\email{a.baker@maths.gla.ac.uk}
\urladdr{\href{http://www.maths.gla.ac.uk/~ajb}
                            {http://www.maths.gla.ac.uk/~ajb}}
\address{Fachbereich Mathematik der Universit\"at Hamburg,
Bundesstrasse 55, 20146 Hamburg, Germany.}
\email{richter@math.uni-hamburg.de}
\urladdr{\href{http://www.math.uni-hamburg.de/home/richter/}
                    {http://www.math.uni-hamburg.de/home/richter/}}
\address{Mathematisches Institut, Heinrich-Heine-Universit\"at D\"usseldorf,
Universit\"atsstrasse 1, 40225 D\"usseldorf, Germany.}
\email{szymik@math.uni-duesseldorf.de}
\urladdr{\href{http://www.math.uni-duesseldorf.de/Personen/indiv/Szymik}
              {http://www.math.uni-duesseldorf.de/Personen/indiv/Szymik}}
\thanks{The first named author thanks the Fachbereich Mathematik
der Universit\"at Hamburg for its hospitality. The second named
author was supported by the Glasgow Mathematical Journal Learning
and Research Support Fund and thanks the Mathematics Department
of the University of Glasgow for its hospitality.}
\date{\version}
\keywords{Azumaya algebras, Brauer groups, Galois theory of
structured ring spectra, topological Hochschild cohomology}
\subjclass[2010]{Primary 55P43; Secondary 11R52, 11R32}
\begin{abstract}
We investigate a notion of Azumaya algebras in the context of
structured ring spectra and give a definition of Brauer groups.
We investigate their Galois theoretic properties, and discuss
examples of Azumaya algebras arising from Galois descent and
cyclic algebras. We construct examples that are related to
topological Hochschild cohomology of group ring spectra and
we present a $K(n)$-local variant of the notion of Brauer
groups.
\end{abstract}

\maketitle

\begin{center}
\textit{To the memory of Gor\^o Azumaya, $\ast$ 26th of February
  1920, $\dagger$ 8th of July 2010}
\end{center}

\section*{Introduction}

The investigation of Brauer groups of commutative $S$-algebras
is one aspect of the attempt to understand arithmetic properties
of structured ring spectra.

In classical algebraic settings, Brauer groups are defined in
terms of Azumaya algebras over fields or more generally over
commutative rings~\cite{Azu:first,MA&OG:BrauerGps,Saltman} and
are closely involved in Galois theoretic considerations. In
this paper we discuss some ideas on Brauer groups for commutative
$S$-algebras and in Section~\ref{sec:Gal->Azumaya} we investigate
their behaviour with respect to Galois extensions of commutative
$S$-algebras in the sense of John Rognes~\cite{JR:Opusmagnus}.
%%explained in more detail
In earlier work, the first named author and Andrey Lazarev discussed
notions of Azumaya algebras \cite[sections 2, 4]{AB&AL:Morita}, but
these appear to be technically problematic: there faithfulness of
the underlying module spectra was not required, but many of standard
constructions with Azumaya algebras rely on this property. Also, the
link with other definitions (for instance \cite{To,NJ}) works only
under this faithfulness assumption.

Since our work was begun, other people have carried out work
on Azumaya algebras and Brauer groups in contexts related to
ours. Niles Johnson~\cite{NJ} discusses Azumaya objects for
general closed autonomous symmetric monoidal bicategories, 
and proves a comparison result~\cite[proposition~5.4]{NJ} 
which compares our definition of Azumaya algebras with his.
In~\cite[theorem~1.5]{NJ} he also shows that the derived 
Brauer group of a commutative ring in the sense of Bertrand 
To\"en agrees with our Brauer group of the corresponding 
Eilenberg-Mac~Lane ring spectrum.

In \cite[definition~4.6]{szymik} the third named author extends
our current work to construct a Brauer space for commutative
$S$-algebras such that the fundamental group of that space 
agrees with our Brauer group. The approaches in \cite{To,AG,GL} 
give descriptions of Brauer groups in terms of \'etale cohomology
groups in the derived context and the context of ring spectra,
respectively.

We present our  definition of topological Azumaya algebras 
in Section~\ref{sec:AzumayaAlg} and show that such algebras 
are always homotopically central (in the sense of 
Definition~\ref{def:htpcentral}) and separable, and also that 
the Azumaya property is preserved under base change.

In Section~\ref{sec:Brauer} we define Brauer groups of 
commutative $S$-algebras and in Section~\ref{sec:Gal->Azumaya} 
we prove a version of Galois descent for topological Azumaya 
algebras. This is applied in Section~\ref{sec:cyclic} where we 
explain how the classical theory of cyclic algebras can be 
extended to the context of commutative $S$-algebras.

In the case of Eilenberg-Mac~Lane spectra we show in
Section~\ref{sec:EM} under some assumptions on the ring $R$
that an extension $HR \lra HA$ is topologically Azumaya if
and only if the extension of commutative rings $R \lra A$ is
an algebraic Azumaya extension. Furthermore, using recent work
of Bertrand To\"en~\cite{To}, we can deduce that the Brauer
group $\Br(H\k)$ is trivial if $\k$ is an algebraically or
separably closed field.

Classically, the center of an associative algebra $A$ over a 
commutative ring $R$ can be described as endomorphisms of $A$ 
in the category of modules over the enveloping algebra 
$A^e = A \otimes_R A^\op$. For structured ring spectra, the direct 
analogue of this definition does not yield a homotopy invariant notion. 
Instead one has to replace $A$ by a cofibrant object in the category 
of module spectra over the enveloping algebra spectrum, so the center 
of an associative $R$-algebra spectrum $A$ is given by the topological 
Hochschild cohomology spectrum $\THH_R(A,A)$. This spectrum is not 
strictly commutative in general, but due to the affirmatively solved 
Deligne conjecture~\cite{MS} it is an $E_2$-spectrum. There are however 
exceptions and in Section~\ref{sec:grouprings} we discuss some examples 
arising from group ring spectra and their homotopy fixed point spectra.

In Section~\ref{sec:Azumaya-LT} we offer a variant of the construction
of Brauer groups in the $K(n)$-local context where it appears that
technical difficulties are minimized and we discuss some examples
related to $EO_2=L_{K(2)}\mathit{TMF}$ in Section~\ref{sec:K-localAzumaya}. 
In Section~\ref{sec:Br(SK)} we describe a non-trivial element in the 
$K(n)$-local Brauer group of the $K(n)$-local sphere.

\section{Azumaya algebras over commutative $S$-algebras}
\label{sec:AzumayaAlg}

Throughout, let $R$ be a cofibrant commutative $S$-algebra. We work
in the categories of $R$-modules, $\mathscr{M}_R$, and associative
$R$-algebras, $\mathscr{A}_R$ and for definiteness we choose the
framework of~\cite{EKMM}. Following~\cite{AB&BR:Galois,JR:Opusmagnus},
we will say that an $R$-module $W$ is \emph{faithful} if for an
$R$-module $X$, $W\wedge_R X\simeq *$ implies that $X\simeq *$.

We recall some ideas from~\cite{AB&AL:Morita}. If $A$ is
an $R$-algebra, we denote by $A^\op$ the $R$ algebra whose
underlying $R$-module is $A$ but whose multiplication is
reversed. The topological Hochschild cohomology spectrum
of $A$ (over $R$) is
\[
\THH_R(A)=\THH_R(A,A)=F_{A\wedge_R A^\op}(\tilde A,\tilde A),
\]
where $\tilde A$ is a cofibrant replacement for $A$ in the category
of left $A\wedge_R A^\op$-modules $\mathscr{M}_{A\wedge_R A^\op}$.
We write $\eta\:R\lra\THH_R(A)$ for the canonical map into the
$R$-algebra $\THH_R(A)$; we also write
$\mu\:A\wedge_R A^\op\lra F_R(A,A)$ for the $R$-algebra map induced
by the left and right actions of $A$ and $A^\op$ on $A$.

\begin{defn}\label{defn:Azumaya}
Let $A$ be an $R$-algebra. Then $A$ is a \emph{weak \emph{(}topological\emph{)}
Azumaya algebra over $R$} if and only if the first two of the following
conditions hold, while $A$ is a \emph{\emph{(}topological\emph{)}
Azumaya algebra over $R$} if and only if all three of them hold.
\begin{enumerate}
\item
$A$ is a dualizable $R$-module.
\item
$\mu\:A\wedge_R A^\op\lra F_R(A,A)$ is a weak equivalence.
\item
$A$ is faithful as an $R$-module.
\end{enumerate}
\end{defn}
Note that this definition of Azumaya algebras over~$R$ differs
from that in~\cite{AB&AL:Morita} since we demand faithfulness
of $A$ over $R$ and not just $A$-locality of $R$ as an $R$-module.

If $T$ is an ordinary commutative ring with unit and if $B$ is an
associative $T$-algebra, then the center of $B$ can be identified
with the endomorphisms of $B$ as an $B  \otimes_T B^\op$-module.
Therefore $\THH_R(A)$ can be viewed as a homotopy invariant version
of the center of $A$.
\begin{defn} \label{def:htpcentral}
An $R$-algebra $A$ is said to be \emph{homotopically central} if
the canonical map $\eta\:R\lra\THH_R(A)$ is a weak equivalence.
\end{defn}
For the following we recall a special case of the Morita theory
developed in \cite[section~1]{AB&AL:Morita}. For a topological
Azumaya algebra $A$ over $R$ we consider the category of left
modules over the endomorphism spectrum $F_R(A,A)$,
$\mathscr{M}_{F_R(A,A)}$ and we take a cofibrant replacement
$\bar{A}$ of $A$ in this category. The functor
\begin{equation*}
F\: \mathscr{M}_R \lra \mathscr{M}_{F_R(A,A)}
\end{equation*}
that sends $X$ to $X \wedge_R \bar{A}$ has an adjoint
\begin{equation*}
G \:\mathscr{M}_{F_R(A,A)} \lra \mathscr{M}_R
\end{equation*}
with $G(Y) =F_{F_R(A,A)}(\bar{A},Y)$. Then~\cite[theorem~1.2]{AB&AL:Morita}
implies that this adjoint pair of functors passes to an adjoint
pair of equivalences between the corresponding derived categories
\begin{equation*}
\xymatrix@1{
{\mathscr{D}_R}\ar@<0.5ex>[r]^{\widetilde{F}\ph{abc}}
      & \ar@<0.5ex>[l]^{G\ph{abc}} {\mathscr{D}_{F_R(A,A)}}
}
\end{equation*}
and as a direct consequence we obtain the following result.
\begin{prop}[{\cite[proposition~2.3]{AB&AL:Morita}}]\label{prop:htpcentral}
Every topological Azumaya algebra $A$ over $R$ is homotopically
central.
\end{prop}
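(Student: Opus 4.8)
The plan is to transport $\THH_R(A)$ across the Morita equivalence displayed above and to recognise the canonical map $\eta$ as the unit of that adjunction. First, condition~(2) of Definition~\ref{defn:Azumaya} gives a weak equivalence $\mu\colon A\wedge_R A^\op\lra F_R(A,A)$ of $R$-algebras, so restriction of scalars along $\mu$ induces an equivalence of derived module categories $\mathscr{D}_{F_R(A,A)}\simeq\mathscr{D}_{A\wedge_R A^\op}$. Under this equivalence the cofibrant $A\wedge_R A^\op$-module $\tilde A$ entering the definition of $\THH_R(A)$ goes over to the cofibrant $F_R(A,A)$-module $\bar A$ (both being cofibrant replacements of the bimodule $A$), whence
\[
\THH_R(A)=F_{A\wedge_R A^\op}(\tilde A,\tilde A)\simeq F_{F_R(A,A)}(\bar A,\bar A)=G(\bar A).
\]

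Next I would note that $\bar A$ is, up to weak equivalence, the image of $R$ under the left adjoint: $\bar A\simeq R\wedge_R\bar A=\widetilde{F}(R)$. Therefore $G(\bar A)\simeq (G\circ\widetilde{F})(R)$, and chasing the identifications above one sees that the composite $R\lra G(\bar A)\simeq\THH_R(A)$ is exactly the unit $R\lra (G\circ\widetilde{F})(R)$ of the adjunction $(\widetilde{F},G)$. Since this adjunction is an adjoint equivalence of derived categories, its unit is a natural weak equivalence, and evaluating at $R$ shows that $\eta\colon R\lra\THH_R(A)$ is a weak equivalence.

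The step that requires genuine care — and the main obstacle — is the last identification, namely that $\eta$ really is the unit at $R$. After transport along $\mu$ the map $\eta$ becomes the scalar multiplication map $R\lra F_{F_R(A,A)}(\bar A,\bar A)$ induced by the left $R$-module structure on $\bar A$, and one must check that this agrees with the morphism adjoint to $\id_{\widetilde{F}(R)}$. This is a diagram chase through the bar-type cofibrant replacements, and it must be carried out while keeping control of cofibrancy and fibrancy so that every function spectrum in sight models the corresponding derived mapping object. Conditions~(1) and~(3) of Definition~\ref{defn:Azumaya} are used only indirectly, through the hypotheses of the Morita equivalence \cite[theorem~1.2]{AB&AL:Morita} invoked above.
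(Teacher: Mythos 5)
Your argument is correct and is essentially the paper's own route: the paper obtains the proposition as a direct consequence of the Morita equivalence $\mathscr{D}_R\simeq\mathscr{D}_{F_R(A,A)}$ (citing \cite[proposition~2.3]{AB&AL:Morita}), via exactly your identification $\THH_R(A)\simeq G\circ\widetilde{F}(R)$ --- the same equivalence the paper reuses explicitly in the proof of Proposition~\ref{prop:separable}. Your additional care in checking that $\eta$ is the unit of the adjunction, and your remark that conditions~(1) and~(3) enter only through the hypotheses of the Morita theorem, are details the paper leaves implicit.
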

By proposition~2.3 and definition~2.1 of~\cite{AB&AL:Morita}, we
also see that any topological Azumaya algebra $A$ over $R$ is
dualizable as an $A \wedge_R A^\op$-module and $A \wedge_R A^{\op}$
is $A$-local as a left module over itself.

In classical algebra, Azumaya algebras are in particular separable.
Using Morita theory we can deduce the analogous statement for
topological Azumaya algebras. Here an $R$-algebra is \emph{separable}
in the sense of~\cite[definition~9.1.1]{JR:Opusmagnus} if the
multiplication $m\: A \wedge_R A \lra A$ has a section in the
derived category of left $A \wedge_R A^\op$-modules,
$\mathscr{D}_{A \wedge_R A^\op}$.

\begin{prop} \label{prop:separable}
Let $A$ be a topological Azumaya $R$-algebra. Then $A$ is
separable.
\end{prop}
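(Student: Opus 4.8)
The plan is to use the Morita equivalence recalled just above the statement. Recall that for a topological Azumaya algebra $A$ over $R$ the multiplication map $\mu\:A\wedge_R A^\op\lra F_R(A,A)$ is a weak equivalence, so the module category $\mathscr{M}_{A\wedge_R A^\op}$ is, up to the derived equivalence, the same as $\mathscr{M}_{F_R(A,A)}$. Under this identification the functor $G$ carries the cofibrant replacement $\bar A$ to $F_{F_R(A,A)}(\bar A,\bar A)$, and since $A$ is homotopically central by Proposition~\ref{prop:htpcentral}, that endomorphism spectrum is weakly equivalent to $R$. So the derived category $\mathscr{D}_{A\wedge_R A^\op}$ is generated by $A$ itself (viewed as a left $A\wedge_R A^\op$-module via $\mu$) with $\End$ equal to $R$, and the equivalence $G$ sends an arbitrary left $A\wedge_R A^\op$-module $Y$ to the $R$-module $F_{A\wedge_R A^\op}(\tilde A,Y)$.

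First I would compute what the equivalence $G$ does to the two $A\wedge_R A^\op$-modules in sight. The module $A\wedge_R A$ (with the outer left $A\wedge_R A^\op$-action) is a free rank-one $F_R(A,A)$-module after transport along $\mu$ — indeed $F_R(A,A)$ acting on $A\wedge_R A\simeq F_R(A,F_R(R,A))$ — so $G(A\wedge_R A)\simeq A$ as an $R$-module (more precisely, $G$ applied to it recovers $A$ with its right $R$-module structure, and the relevant point is that $G$ of the multiplication target $A$ is $R$). Concretely: $G(A)=F_{A\wedge_R A^\op}(\tilde A,A)\simeq\THH_R(A)\simeq R$, while $G(A\wedge_R A)\simeq A$ because smashing with $A$ over $R$ corresponds under the Morita equivalence to the free module functor. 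Hence applying $G$ to the multiplication map $m\:A\wedge_R A\lra A$ yields an $R$-module map $A\lra R$ in the derived category.

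Next I would produce the section. The composite $R\xrightarrow{\eta} A\xrightarrow{m\text{'s }G\text{-image}}R$ should be the identity, since $\eta$ is a unit map and $m$ restricts to the identity on $A$; this identifies the $G$-image of $m$ with a split surjection in $\mathscr{D}_R$, the splitting being (the $G$-image of) the unit $\eta$. Since $G$ is an equivalence of derived categories it is full and faithful, so a map in $\mathscr{D}_{A\wedge_R A^\op}$ is a split epimorphism if and only if its image under $G$ is; transporting the splitting of $G(m)$ back through the inverse equivalence $\widetilde F$ gives the desired section of $m$ in $\mathscr{D}_{A\wedge_R A^\op}$, which is exactly separability in the sense of \cite[definition~9.1.1]{JR:Opusmagnus}.

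The main obstacle is bookkeeping the module structures: one must be careful that $G$ is applied to $A\wedge_R A$ and to $A$ with the \emph{same} left $A\wedge_R A^\op$-action (the one used to set up the Morita equivalence, i.e. via $\mu$ after the weak equivalence $\mu$), and that under this action $m$ is genuinely $A\wedge_R A^\op$-linear — which holds because $m$ is a bimodule map. Identifying $G(A\wedge_R A)$ with $A$ and $G(A)$ with $\THH_R(A)\simeq R$ on the nose, compatibly with the map induced by $m$, is the step that requires the most care; once those identifications are in hand the existence of the section is formal from the equivalence of derived categories together with homotopy centrality.
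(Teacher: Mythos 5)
Your argument is correct and follows essentially the same route as the paper: both use the Morita equivalence together with homotopy centrality to identify $\THH_R(A,A)\simeq \tilde G\tilde F(R)$ with $R$ and $\THH_R(A,A\wedge_R A)\simeq \tilde G\tilde F(A)$ with $A$, and both observe that the unit splits the resulting map because $m\circ(\eta\wedge\id_A)=\id_A$. The only cosmetic difference is at the last step, where the paper invokes the remark after Rognes' definition~9.1.1 to reduce separability to surjectivity of $\pi_0(m_*)$, whereas you transport the section back through the equivalence to split $m$ in $\mathscr{D}_{A\wedge_R A^\op}$ directly; both are fine.
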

\begin{proof}
By the remark following~\cite[definition~9.1.1]{JR:Opusmagnus},
it suffices to prove that the induced map
\begin{equation*}
m_*\:\THH_R(A,A\wedge_R A) \lra \THH_R(A,A)
\end{equation*}
is surjective on $\pi_0(-)$. Denote by $\tilde{A}$ a cofibrant
replacement of $A$ in the category of $A \wedge_R A^\op$-modules.
Morita equivalence yields the two weak equivalences
\begin{align*}
\tilde{G}\circ \tilde{F}(R) &\simeq \THH_R(A,A), \\
\tilde{G}\circ \tilde{F}(A) &\simeq \THH_R(A,A\wedge_R A).
\end{align*}
The functoriality of $\tilde{G}\circ \tilde{F}$ ensures that
the unit $\eta\: R \lra A$ induces a map
${\tilde{G}\circ\tilde{F}(\eta)}$ with
\begin{equation*}
\xymatrix@1{ {R} \ar[r]^(0.35){\simeq} & {\tilde{G}\circ\tilde{F}(R)}
\ar[rr]^{\tilde{G}\circ
\tilde{F}(\eta)} &  & {\tilde{G}\circ\tilde{F}(A)} &
\ar[l]_(0.35){\simeq} {A}.
}
\end{equation*}
This is given by sending the coefficient module of $\THH$,
$\tilde{A} \simeq R\wedge_R\tilde{A} \simeq R \wedge_R A$,
to $A \wedge_R A \simeq A \wedge_R \tilde{A}$ using $\eta$.
Therefore
\begin{equation*}
\pi_0(m_*) \circ \pi_0(\tilde{G}\circ \tilde{F}(\eta)) = \id,
\end{equation*}
and so $\pi_0(m_*)$ is surjective.
\end{proof}

We now describe the behaviour of Azumaya algebras under base
change.
\begin{prop} \label{prop:basechange}
Let $A,B,C$ be $R$-algebras.
\begin{enumerate}
\item
If $A$ is an Azumaya algebra over $R$ and if\/ $C$ is a commutative
$R$-algebra, then $A \wedge_R C$ is an Azumaya algebra over\/ $C$.
\item
Conversely, let\/ $C$ be a commutative $R$-algebra such that\/ $C$
is dualizable and faithful as an $R$-module. If $A \wedge_R C$ is an
Azumaya algebra over\/ $C$, then $A$ is an Azumaya algebra over\/ $R$.
\item
If $A$ and $B$ are Azumaya algebras over $R$, then $A \wedge_R B$
is also Azumaya over $R$.
\end{enumerate}
\end{prop}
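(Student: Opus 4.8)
The plan is to verify the three defining conditions of Definition~\ref{defn:Azumaya} for each of the three constructions, exploiting the fact that dualizability, faithfulness, and the smash product all interact well with base change.

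For part~(1), assume $A$ is Azumaya over $R$ and $C$ is a commutative $R$-algebra. Dualizability is preserved under the base-change functor $(-)\wedge_R C$, so $A\wedge_R C$ is a dualizable $C$-module. For condition~(2), I would identify the enveloping algebra: $(A\wedge_R C)\wedge_C (A\wedge_R C)^\op \simeq (A\wedge_R A^\op)\wedge_R C$ as $C$-algebras, and similarly $F_C(A\wedge_R C, A\wedge_R C) \simeq F_R(A,A)\wedge_R C$ using dualizability of $A$ (so that $F_R(A,A)\simeq A\wedge_R A^\vee$ and smashing with $C$ commutes with this). Under these identifications the multiplication map $\mu$ for $A\wedge_R C$ over $C$ is simply $\mu_A\wedge_R \id_C$, which is a weak equivalence since $\mu_A$ is. For condition~(3), faithfulness of $A\wedge_R C$ as a $C$-module: if $(A\wedge_R C)\wedge_C X\simeq *$ for a $C$-module $X$, then restricting along $R\to C$ gives $A\wedge_R X\simeq *$, hence $X\simeq *$ by faithfulness of $A$ over $R$.

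For part~(2), the new input is the hypothesis that $C$ is dualizable and faithful over $R$. Dualizability of $A$ over $R$ follows because $A\wedge_R C$ is dualizable over $C$, $C$ is dualizable over $R$, and one can detect dualizability of $A$ by smashing with the faithful dualizable module $C$ (a retract/descent argument: $A$ is a retract, up to the relevant finiteness, of something built from $A\wedge_R C$ and the dual of $C$). Faithfulness of $A$ over $R$: if $A\wedge_R X\simeq *$ then $A\wedge_R C\wedge_R X\simeq *$, and smashing further down over $C$ shows $(A\wedge_R C)\wedge_C(C\wedge_R X)\simeq *$, so $C\wedge_R X\simeq *$ by faithfulness of $A\wedge_R C$ over $C$, whence $X\simeq *$ by faithfulness of $C$ over $R$. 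For condition~(2), the map $\mu_A\: A\wedge_R A^\op\to F_R(A,A)$ becomes a weak equivalence after applying the faithful functor $(-)\wedge_R C$ (by part~(1)'s identifications together with the hypothesis that $A\wedge_R C$ is Azumaya over $C$), and a map of $R$-modules that is a weak equivalence after smashing with a faithful $R$-module is itself a weak equivalence.

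For part~(3), with $A$ and $B$ both Azumaya over $R$: dualizability of $A\wedge_R B$ is immediate since a smash product of dualizable $R$-modules is dualizable, and faithfulness is likewise immediate since $(A\wedge_R B)\wedge_R X\simeq A\wedge_R(B\wedge_R X)$ vanishes only if $B\wedge_R X\simeq *$, only if $X\simeq *$. The content is condition~(2): I would use the symmetry isomorphism to identify $(A\wedge_R B)\wedge_R(A\wedge_R B)^\op \simeq (A\wedge_R A^\op)\wedge_R(B\wedge_R B^\op)$, and on the target side $F_R(A\wedge_R B, A\wedge_R B)\simeq F_R(A,A)\wedge_R F_R(B,B)$ using dualizability of both $A$ and $B$; then $\mu_{A\wedge_R B}$ is identified with $\mu_A\wedge_R\mu_B$, a smash of weak equivalences. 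The main obstacle is bookkeeping: making the identification $F_R(A,A)\wedge_R F_R(B,B)\xrightarrow{\sim} F_R(A\wedge_R B, A\wedge_R B)$ precise (it genuinely needs dualizability, not merely formal adjunction), and checking that under all the chosen identifications the various structure maps $\mu$ really do correspond to the external smash products claimed, rather than differing by a swap or an unwanted twist. These are the points where I would be most careful; everything else is a routine transport along standard equivalences in $\mathscr{M}_R$.
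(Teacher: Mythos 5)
Your proposal is correct and follows essentially the same route as the paper: verify dualizability and faithfulness by the formal base-change/restriction arguments, and reduce the Azumaya condition on $\mu$ to a commutative diagram identifying $\mu_{A\wedge_R C}$ with $\mu_A\wedge_R C$ composed with a duality equivalence $F_R(A,A)\wedge_R C\simeq F_C(A\wedge_R C,A\wedge_R C)$, with the converse in (2) obtained by descending along the faithful dualizable module $C$ (the paper cites Rognes, lemma~6.2.4, for the dualizability descent you sketch). The paper leaves part (3) as "straightforward"; your explicit identification of $\mu_{A\wedge_R B}$ with $\mu_A\wedge_R\mu_B$ is exactly the intended argument.
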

\begin{proof}
If $A$ is an Azumaya algebra over $R$, then it is formal
to verify that $A \wedge_R C$ is dualizable and faithful
over $C$ (compare~\cite[4.3.3, 6.2.3]{JR:Opusmagnus}). It
remains to show that
\begin{equation*}
\mu_{A \wedge_R C}\:(A \wedge_R C) \wedge_C (A \wedge_R C)^\op
                       \lra F_C(A \wedge_R C, A \wedge_R C)
\end{equation*}
is a weak equivalence. Note that since the multiplication
in $A\wedge_R C$ is defined componentwise,
\begin{equation*}
(A\wedge_R C)^\op = A^\op\wedge_R C^\op.
\end{equation*}
The diagram
\begin{equation} \label{eqn:mu}
\xymatrix{
{(A \wedge_R C) \wedge_C (A \wedge_R C)^\op } \ar[d]_{\simeq}
                        \ar[rr]^{\mu_{A \wedge_R C}} & {} &
{F_C(A \wedge_R C, A \wedge_R C)} \ar[d]^{\simeq}\\
{A \wedge_R A^\op \wedge_R C} \ar[dr]^{\mu_A \wedge_R C}& {}
                     & {F_R(A, A\wedge_R C)} \\
  & {F_R(A,A) \wedge_R C} \ar[ur]^{\nu} &
}
\end{equation}
commutes. Here $\nu\: F_R(A,A) \wedge_R C \lra F_R(A,A \wedge_R C)$
denotes the duality map. As $A$ is Azumaya over $R$ we know that
$\nu$ and $\mu_A$ are equivalences, and thus we obtain that the
top map is an equivalence as well.

For the converse we assume that $A \wedge_R C$ is Azumaya over
$C$ and $C$ is faithful and dualizable as an $R$-module. If $M$
is an $R$-module, then $A \wedge_R M \simeq *$ implies that
\begin{equation*}
(A \wedge_R C) \wedge_R M
      \simeq (A \wedge_R C) \wedge_C (C \wedge_R M) \simeq *.
\end{equation*}
Also, the faithfulness of $A \wedge_R C$ over $C$ ensures
that $C\wedge_R M \simeq *$. But as we assumed that $C$
is faithful over~$R$, we can conclude that $M$ was trivial.

The fact that $A$ is dualizable over $R$ follows
from~\cite[lemma~6.2.4]{JR:Opusmagnus}. Making use of
diagram~\eqref{eqn:mu} we see that $\mu_A$ is also a weak
equivalence.

The proof of the third claim is straightforward.
\end{proof}

Later we will consider Azumaya algebras in a Bousfield local
setting. Let $L$ be a cofibrant $R$-module.
\begin{defn}\label{defn:AzumAlg-Klocal}
An $L$-local $R$-algebra $A$ is an (\emph{$L$-local}) \emph{Azumaya
algebra} if
\begin{enumerate}
\item
$A$ is a dualizable $L$-local $R$-module.
\item
The natural morphism of $R$-algebras $A\wedge_R A^\op\lra F_R(A,A)$
is an $L$-local equivalence.
\item
$A$ is faithful as an $L$-local $R$-module.
\end{enumerate}
\end{defn}
Here dualizability as an $L$-local $R$-module means dualizability
in the derived category of $L$-local $R$-modules. This is a symmetric
monoidal category with the $L$-localization of the smash product 
over $R$ as the symmetric monoidal product, so the definition of 
dualizability from~\cite{AD&DP:Duality} applies.

\section{Brauer groups}\label{sec:Brauer}

Now suppose that $M$ is a dualizable $R$-module as discussed
in~\cite{JR:Opusmagnus,AB&BR:Galois}; a more detailed discussion
of dualizability can be found in~\cite{AD&DP:Duality}. Let
$\mathcal{E}_R(M)=F_R(M,M)$ be its endomorphism $R$-algebra. 
Then there is a weak equivalence
\begin{equation}\label{eqn:End=func}
\mathcal{E}_R(M) \simeq F_R(M,R) \wedge_R M.
\end{equation}
In order to identify endomorphism spectra of faithful and
dualizable $R$-modules as trivial Azumaya algebras we need
the following auxiliary result.

\begin{lem}\label{lem:Dual-faithful}
Let $M$ be a dualizable $R$-module.
\begin{enumerate}
\item
If $M$ is a faithful $R$-module, then the dual $F_R(M,R)$ is also
faithful.
\item
If $M$ is $L$-local with respect to a cofibrant $R$-module $L$,
then $F_R(M,R)$ is $L$-local.
\end{enumerate}
\end{lem}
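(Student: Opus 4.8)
The plan is to exploit the standard identification of the dual of a dualizable module together with the symmetry of the smash product, reducing both statements to properties that $M$ itself already enjoys. Write $DM = F_R(M,R)$. Since $M$ is dualizable there is a coevaluation map $R \lra M \wedge_R DM$ and an evaluation map $DM \wedge_R M \lra R$ satisfying the usual triangle identities, and moreover $M$ is canonically isomorphic to its double dual $D(DM) = F_R(DM,R)$ in $\mathscr{D}_R$. This last fact is the engine for part~(1): it says $M$ is itself \emph{a} dual module, so any general principle asserting that the dual of a faithful dualizable module is faithful will, applied to $DM$, recover faithfulness of $M$ and conversely.

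For part~(1) the key step is the natural equivalence
\begin{equation*}
DM \wedge_R X \simeq F_R(M, X),
\end{equation*}
valid for dualizable $M$ and any $R$-module $X$ (this is~\eqref{eqn:End=func} with $R$ replaced by $X$, or rather its one-variable analogue). Granting this, suppose $M$ is faithful and $DM \wedge_R X \simeq \ast$. Then $F_R(M,X) \simeq \ast$, so $[M \wedge_R P, X] = [P, F_R(M,X)] = 0$ for every $R$-module $P$; taking $P = R$ shows $[M, X] = 0$, and more generally the vanishing of $F_R(M,X)$ together with the fact that $M$ generates (via faithfulness, every nonzero $X$ receives a nonzero map from some suspension of $M$ after smashing — more precisely, $M \wedge_R X \not\simeq \ast$, and one relates $M \wedge_R X$ to $F_R(M,X)$ using the further equivalence $M \wedge_R X \simeq F_R(DM, X)$ coming from double duality) forces $X \simeq \ast$. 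I would phrase this last implication cleanly as: $DM \wedge_R X \simeq F_R(M,X)$, and smashing once more with $M$ (using double duality $M \simeq D(DM)$) gives $M \wedge_R (DM \wedge_R X) \simeq M \wedge_R F_R(M,X) \simeq F_R(M, M \wedge_R X)$; alternatively one observes directly that $DM$ is dualizable with dual $M$, and faithfulness is detected by the composite of unit and counit. The cleanest route: $DM$ is dualizable with $D(DM) \simeq M$, and for a dualizable module $N$ one has $N$ faithful iff $DN$ faithful — because $N \wedge_R X \simeq \ast \iff F_R(DN, X) \simeq \ast \iff DN \wedge_R Y \simeq \ast$ has no solutions, using the adjunction and the generator $R$.

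For part~(2), if $M$ is $L$-local I claim $DM = F_R(M,R)$ is $L$-local. Here I would use that for dualizable $M$ the functor $DM \wedge_R (-)$ is equivalent to $F_R(M,-)$, and that $F_R(M,Z)$ is $L$-local whenever $Z$ is $L$-local — because $L$-locality of $Z$ means $[W, Z] = 0$ for all $L$-acyclic $W$, whence $[W, F_R(M,Z)] = [W \wedge_R M, Z]$; but $W \wedge_R M$ is again $L$-acyclic (as $L \wedge_R W \simeq \ast$ implies $L \wedge_R W \wedge_R M \simeq \ast$), so this group vanishes. Wait — one must be slightly careful: this argument shows $F_R(M,Z)$ is $L$-local for any module $M$, not using dualizability, provided $Z$ is $L$-local; taking $Z = R$ is only legitimate if $R$ is $L$-local, which need not hold. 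So instead I would use dualizability: $DM \simeq DM \wedge_R R$, but more usefully $DM$ receives the coevaluation-derived description and one has $F_R(M,R) \simeq F_R(M, R)$ — the honest fix is that $M$ is $L$-local and dualizable, so $M \simeq M \wedge_R S$ for... no. The correct argument: $DM$ is $L$-local iff the localization map $DM \lra L_L DM$ is an equivalence, equivalently iff $DM \wedge_R W \simeq \ast$ — no, locality is not smashing. Let me instead argue via the hom-description directly: $DM \wedge_R X \simeq F_R(M,X)$ naturally, and $DM$ is $L$-local iff $[W, DM] = 0$ for all $L$-acyclic $W$; now $[W, DM] = [W, F_R(M,R)] = [W \wedge_R M, R]$, and $W \wedge_R M$ is $L$-acyclic, but we need $R$...

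\textbf{The real obstacle.} The honest difficulty is precisely part~(2): the naive "hom into $L$-local is $L$-local" does not apply since the target is $R$, not an $L$-local module. The fix I would pursue: since $M$ is dualizable and $L$-local, $DM$ is also dualizable, and I claim $L \wedge_R DM \simeq F_R(M, L)$ — more generally the $L$-localization of $DM$ is $F_R(M, L_L R)$, and dualizability lets one check that the natural map $DM = F_R(M,R) \lra F_R(M, L_L R)$ becomes the $L$-localization; one then shows it is already an equivalence because $M$ being $L$-local and dualizable means $M \simeq F_R(DM, L_L R)$ (double duality computed $L$-locally), which feeds back through the adjunction. I expect the clean statement to be: for dualizable $M$, $DM$ is $L$-local iff $M$ is $L$-local, proved by noting both are equivalent to the condition that $M \wedge_R (-)$ (equivalently $F_R(M,-)$, equivalently $F_R(DM,-)$) preserves $L$-acyclics, which is symmetric in $M$ and $DM$ via $D(DM) \simeq M$. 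Pinning down that symmetric reformulation of $L$-locality for dualizable modules is the step requiring the most care.
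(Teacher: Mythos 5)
Your write-up assembles the right ingredients (evaluation and coevaluation, double duality, and the identification $F_R(M,R)\wedge_R X\simeq F_R(M,X)$), but in neither part does it close into a correct argument, and the one tool that makes both parts a short computation is missing: the triangle identities of the duality, smashed with or mapped out of a test module. For part (1) the paper simply smashes the composite
\[
M\simeq R\wedge_R M\xrightarrow{\;\delta\wedge\id\;} M\wedge_R F_R(M,R)\wedge_R M\xrightarrow{\;\id\wedge\epsilon\;} M,
\]
which is the identity, with $N$: if $F_R(M,R)\wedge_R N\simeq *$, the middle term is trivial, so $\id_{M\wedge_R N}$ factors through a point, whence $M\wedge_R N\simeq *$ and faithfulness of $M$ gives $N\simeq *$. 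Your ``cleanest route'' (``for dualizable $N$, $N$ faithful iff $DN$ faithful'') is circular as justified: that equivalence \emph{is} the statement to be proved (in both directions, via $D(DM)\simeq M$), and the chain ``$N\wedge_R X\simeq *\iff F_R(DN,X)\simeq *\iff DN\wedge_R Y\simeq *$ has no solutions'' does not parse as an argument. The earlier attempt via $F_R(M,X)\simeq *$ also stalls, because faithfulness of $M$ is a hypothesis about $M\wedge_R(-)$, not about $F_R(M,-)$, and the bridge between them is again the triangle identity, which you never actually deploy.

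For part (2) the gap is more serious. You correctly diagnose that ``hom into an $L$-local object is $L$-local'' does not apply since the target is $R$, but the symmetric reformulation you propose --- that $L$-locality of $M$ is equivalent to $M\wedge_R(-)$ preserving $L$-acyclics --- is vacuous: for \emph{any} $R$-module $M$, $L\wedge_R W\simeq *$ implies $L\wedge_R W\wedge_R M\simeq *$, so this condition cannot detect locality and the claimed symmetry proves nothing. The paper's argument applies $F_R(W,-)$, for $L$-acyclic $W$, to the other triangle identity, which exhibits the identity of $F_R(M,R)$ as a composite through $F_R(M,R)\wedge_R M\wedge_R F_R(M,R)$; by dualizability and adjunction the middle term becomes $F_R(W\wedge_R M\wedge_R M,M)$, which vanishes because $W\wedge_R M\wedge_R M$ is $L$-acyclic and $M$ is $L$-local. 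Hence the identity of $F_R(W,F_R(M,R))$ is null for every $L$-acyclic $W$, which is exactly $L$-locality of $F_R(M,R)$.
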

\begin{proof}
(1) Dualizability of $M$ implies that the composition
\begin{equation*}
M \simeq R \wedge_R M 
   \xrightarrow{\delta\wedge\id} M \wedge_R F_R(M,R) \wedge_R M
   \xrightarrow{\id\wedge\epsilon} M \wedge_R R \simeq M
\end{equation*}
is the identity on $M$. Here $\delta\:R \lra M \wedge_R F_R(M,R)$
is the counit, and  $\epsilon \: F_R(M,R) \wedge_R M \lra R$ is 
the evaluation map. Now if $N$ is an $R$-module for which
$F_R(M,R) \wedge_R N \simeq *$, then the identity of $M \wedge_R N$
factors through the trivial map, hence  $N \simeq *$ by
faithfulness of $M$. \\
(2) A similar argument with the functor $F_R(W,-)$ shows that if
$L\wedge_R W\simeq *$, then the identity map on $F_R(W,F_R(M,R))$
factors through
\[
F_R(W,F_R(M,R)\wedge_R M \wedge_RF_R(M,R)) \simeq
    F_R(W \wedge_R M \wedge_R M, M) \simeq *.
\qedhere
\]
\end{proof}

It was shown in~\cite[proposition~2.11]{AB&AL:Morita} that if $M$
is a dualizable, cofibrant $R$-module, then $\mathcal{E}_R(M)$
is a weak topological Azumaya algebra in the sense
of~\cite[definition~2.1]{AB&AL:Morita}.
\begin{prop}\label{prop:End-Az}
If $M$ is a faithful, dualizable, cofibrant $R$-module, then
(a cofibrant replacement of) $\mathcal{E}_R(M)$ is an Azumaya
$R$-algebra.
\end{prop}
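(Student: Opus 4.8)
The plan is to verify the three conditions of Definition~\ref{defn:Azumaya} for $\mathcal{E}_R(M)=F_R(M,M)$, leaning on the weak equivalence~\eqref{eqn:End=func}, $\mathcal{E}_R(M)\simeq F_R(M,R)\wedge_R M$, throughout. Write $M^\vee=F_R(M,R)$ for brevity. First, for condition~(1), dualizability of $\mathcal{E}_R(M)$ as an $R$-module follows because $M^\vee\wedge_R M$ is a smash product of two dualizable $R$-modules: $M$ is dualizable by hypothesis and $M^\vee$ is dualizable with double dual $M$, and the class of dualizable objects in a symmetric monoidal category is closed under the monoidal product. Second, for condition~(3), I would use Lemma~\ref{lem:Dual-faithful}(1): since $M$ is faithful and dualizable, $M^\vee$ is faithful, and the smash product (over $R$) of a faithful module with any module is faithful, so $M^\vee\wedge_R M\simeq\mathcal{E}_R(M)$ is faithful as an $R$-module.

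The substantive point is condition~(2): one must show that the multiplication map
\[
\mu\:\mathcal{E}_R(M)\wedge_R\mathcal{E}_R(M)^\op\lra F_R(\mathcal{E}_R(M),\mathcal{E}_R(M))
\]
is a weak equivalence. Here I would invoke the Morita-type machinery already recalled in the excerpt — specifically, the cited result~\cite[proposition~2.11]{AB&AL:Morita} already establishes that $\mathcal{E}_R(M)$ is a \emph{weak} topological Azumaya algebra in the sense of~\cite[definition~2.1]{AB&AL:Morita}, which includes exactly this condition~(2) together with dualizability. So the only genuinely new ingredient over what is in~\cite{AB&AL:Morita} is upgrading their $A$-locality hypothesis to honest faithfulness, and that is precisely what the first two paragraphs above supply. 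Concretely, the equivalence $\mu$ can be checked by identifying both sides, via~\eqref{eqn:End=func} and the standard adjunction/duality isomorphisms for dualizable modules, with $M^\vee\wedge_R M\wedge_R M^\vee\wedge_R M$, and verifying that $\mu$ corresponds under these identifications to the map that pairs the inner $M$ with the inner $M^\vee$ via the evaluation $\epsilon\:M^\vee\wedge_R M\to R$ — an equivalence because $\epsilon$ together with the coevaluation $\delta$ witnesses the duality (this is the triangle identity used in the proof of Lemma~\ref{lem:Dual-faithful}(1)).

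The main obstacle is bookkeeping rather than conceptual: one has to keep careful track of which copy of $M$ and $M^\vee$ plays which role when passing through~\eqref{eqn:End=func} on both the source and target of $\mu$, and check that the opposite-algebra structure on $\mathcal{E}_R(M)^\op$ is correctly matched (it corresponds, under~\eqref{eqn:End=func}, to swapping the roles of $M$ and $M^\vee$). Since~\cite[proposition~2.11]{AB&AL:Morita} already carries out this identification, I would simply cite it for condition~(2) and devote the written proof to conditions~(1) and~(3), which are the parts not covered there because~\cite{AB&AL:Morita} works with the weaker $A$-locality assumption. One should also note that passing to a cofibrant replacement of $\mathcal{E}_R(M)$ (as the statement does) does not disturb any of these homotopy-invariant conditions.
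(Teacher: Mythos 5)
Your proposal is correct and follows essentially the same route as the paper: cite \cite[proposition~2.11]{AB&AL:Morita} to cover conditions (1) and (2), and reduce the new content to faithfulness, which you obtain from Lemma~\ref{lem:Dual-faithful}(1) together with the equivalence $\mathcal{E}_R(M)\simeq F_R(M,R)\wedge_R M$. One small slip in your wording: the smash product of a faithful module with an \emph{arbitrary} module need not be faithful (take the second factor to be trivial); the statement you actually need, and the one the paper uses, is that the smash product of \emph{two faithful} modules is faithful, which applies here since both $M$ and $F_R(M,R)$ are faithful.
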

\begin{proof}
As $\mathcal{E}_R(M)$ is a weak Azumaya algebra, it suffices
to show that $\mathcal{E}_R(M)$ is a faithful $R$-module.
Dualizability of $M$ ensures that
\begin{equation*}
\mathcal{E}_R(M) \simeq F_R(M,R) \wedge_R M,
\end{equation*}
and this is a smash product of two faithful $R$-modules which
is also faithful.
\end{proof}

This result shows that we can take the $R$-algebras of the form
$\mathcal{E}_R(M)$ with $M$ faithful, dualizable and cofibrant,
to be trivial Azumaya algebras when defining a topological version
of a Brauer group which we now do.

First we note that every Azumaya algebra is weakly equivalent
to a retract of a cell $R$-module, so the following construction
yields a \emph{set} of equivalence classes. Define $\Az(R)$
to be the collection of all Azumaya algebras. Now we introduce
our version of the Brauer equivalence relation $\approx$ on
$\Az(R)$.
\begin{defn}\label{defn:Brauer-eqce}
%%AB typo
%Let $R$ be a cofibrant commutative $S$-algbra.
Let $R$ be a cofibrant commutative $S$-algebra.
%%ABend
If $A_1,A_2\in\Az(R)$, then $A_1 \approx A_2$ if and only
if there are faithful, dualizable, cofibrant $R$-modules
$M_1,M_2$ for which
\[
A_1\wedge_R F_R(M_1,M_1)\simeq A_2\wedge_R F_R(M_2,M_2)
\]
as $R$-algebras. We denote the set of equivalence classes
of these by $\Br(R)$ and we use the notation $[A]$ for the
equivalence class of an $R$-Azumaya algebra $A$.
\end{defn}
\begin{thm}\label{thm:BrauerGps}
The set $\Br(R)$ is an  abelian group with multiplication
induced by the smash product $\wedge_R$. Furthermore, $\Br$
is a functor from the category of commutative $S$-algebras
to abelian groups.
\end{thm}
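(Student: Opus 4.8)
The plan is to verify the group axioms for $\Br(R)$ using the Azumaya algebras $\mathcal{E}_R(M)$ of Proposition~\ref{prop:End-Az} as the identity element, and then to promote the assignment $R\mapsto\Br(R)$ to a functor. First I would check that $\approx$ is an equivalence relation: reflexivity and symmetry are immediate from the definition, while transitivity uses that if $A_1\wedge_R F_R(M_1,M_1)\simeq A_2\wedge_R F_R(M_2,M_2)$ and $A_2\wedge_R F_R(N_2,N_2)\simeq A_3\wedge_R F_R(N_3,N_3)$, then smashing the first equivalence with $F_R(N_2,N_2)$ and the second with $F_R(M_2,M_2)$ and using the natural identification $F_R(M,M)\wedge_R F_R(N,N)\simeq F_R(M\wedge_R N,M\wedge_R N)$ together with commutativity of $\wedge_R$ exhibits $A_1\approx A_3$ via the modules $M_1\wedge_R N_2$ and $M_3$ appropriately smashed; here one uses that a smash product of faithful, dualizable, cofibrant $R$-modules is again of that type (cofibrancy after replacement, faithfulness as in the proof of Proposition~\ref{prop:End-Az}, dualizability being closed under $\wedge_R$).

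Next I would check that $\wedge_R$ is well-defined on equivalence classes and associative and commutative up to the relation: Proposition~\ref{prop:basechange}(3) guarantees that $A\wedge_R B$ is again Azumaya, and if $A\approx A'$ then $A\wedge_R B\approx A'\wedge_R B$ by smashing the defining equivalence with $B$ (again invoking the identification of $F_R(M,M)\wedge_R F_R(M',M')$ with an endomorphism algebra of a faithful dualizable cofibrant module). Associativity and commutativity of the product follow from the corresponding properties of $\wedge_R$ in $\mathscr{A}_R$ up to weak equivalence. For the identity, I would take $[\mathcal{E}_R(M_0)]$ for any faithful, dualizable, cofibrant $M_0$ (e.g. $M_0=R$, so the class of $R$ itself): the relation $A\wedge_R F_R(M_0,M_0)\simeq A\wedge_R F_R(M_0,M_0)$ shows $[A\wedge_R\mathcal{E}_R(M_0)]=[A]$ directly from the definition of $\approx$, and independence of the choice of $M_0$ follows since any two such endomorphism algebras are Brauer equivalent.

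The crux is the existence of inverses, which is where I expect the main obstacle to lie. For an Azumaya algebra $A$ the natural candidate is $[A^\op]$, and one must show $A\wedge_R A^\op\approx R$, i.e. that $A\wedge_R A^\op$ is an endomorphism algebra $F_R(M,M)$ of a faithful dualizable cofibrant $R$-module. But condition~(2) of Definition~\ref{defn:Azumaya} says precisely that $\mu\colon A\wedge_R A^\op\lra F_R(A,A)=\mathcal{E}_R(A)$ is a weak equivalence, and condition~(1) together with condition~(3) says $A$ is dualizable and faithful; after replacing $A$ by a weakly equivalent cofibrant $R$-module we get $A\wedge_R A^\op\simeq\mathcal{E}_R(A)$ as $R$-algebras with $A$ faithful, dualizable, cofibrant, so $[A\wedge_R A^\op]=[\mathcal{E}_R(A)]=[R]$ by Proposition~\ref{prop:End-Az} and the identification of the identity element. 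The subtlety to handle carefully is that $\mu$ is a map of $R$-algebras and that cofibrant replacement can be performed compatibly so that the equivalence is genuinely one of $R$-algebras, not merely of $R$-modules; this is the one place where the formal bookkeeping of model structures on $\mathscr{A}_R$ must be invoked.

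Finally, for functoriality, given a map $f\colon R\lra R'$ of commutative $S$-algebras I would define $\Br(f)\colon\Br(R)\lra\Br(R')$ by $[A]\mapsto[A\wedge_R R']$, which lands in $\Az(R')$ by Proposition~\ref{prop:basechange}(1); it respects $\approx$ because base change commutes with $\wedge_R$ and sends $F_R(M,M)$ to $F_{R'}(M\wedge_R R',M\wedge_R R')$ with $M\wedge_R R'$ still faithful, dualizable, cofibrant over $R'$. It is a group homomorphism since $(A\wedge_R B)\wedge_R R'\simeq(A\wedge_R R')\wedge_{R'}(B\wedge_R R')$ and sends the identity to the identity. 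Compatibility with composition, $\Br(g\circ f)=\Br(g)\circ\Br(f)$, follows from the associativity isomorphism $A\wedge_R R''\simeq(A\wedge_R R')\wedge_{R'}R''$. All of these are routine once the equivalence-relation and inverse steps above are in place.
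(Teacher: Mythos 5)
Your proposal is correct and follows exactly the route the paper intends: the published proof simply cites Auslander--Goldman's classical argument as admitting ``routine modifications'', and your write-up carries out precisely those modifications (identity class via $\mathcal{E}_R(M)$, inverses via $A^\op$ and the Azumaya equivalence $\mu\:A\wedge_R A^\op\xrightarrow{\;\simeq\;}F_R(A,A)$, transitivity via $F_R(M,M)\wedge_R F_R(N,N)\simeq F_R(M\wedge_R N,M\wedge_R N)$), with functoriality given by $[A]\mapsto[R'\wedge_R A]$ exactly as in the paper.
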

\begin{proof}
The details involve routine modifications of the approach
used in the case of Brauer groups of commutative rings
in~\cite[theorem~5.2]{MA&OG:BrauerGps}.

Functoriality for  morphisms of commutative $S$-algebras
$R\lra R'$ is achieved by sending an $R$-algebra $A$ to
the $R'$-algebra $R'\wedge_R A$.
\end{proof}

\begin{rem}\label{rem:NJ-EW}
Johnson's work~\cite[lemma~5.7]{NJ} shows that two
Azumaya $R$-algebras are Brauer equivalent in the sense of
Definition~\ref{defn:Brauer-eqce} if and only if they are
Eilenberg-Watts equivalent in the sense of~\cite[definition~1.1]{NJ}.
As a consequence any Azumaya algebra that is Brauer equivalent
to $R$ is weakly equivalent to $F_R(M,M)$ for some dualizable
faithful cofibrant $R$-module spectrum $M$.
\end{rem}

For a cofibrant $R$-module $L$, we can similarly define 
the sets of $L$-local Azumaya algebras $\Az_L(R)$ and 
the associated $L$-local Brauer group $\Br_L(R)$.

In order to relate Azumaya algebras to Galois theory, we
require the following notions modelled on algebraic analogues.
\begin{defn}\label{defn:Azumaya-split}
Let $R\lra R'$ be an extension of commutative $S$-algebras.
Then the Azumaya algebra $R\lra A$ is \emph{split} by
$R\lra R'$ (or just by $R'$) if $R'\wedge_R A\approx R'$,
or equivalently if $[A]\in\ker(\Br(R)\lra\Br(R'))$. We define
the \emph{relative Brauer group}
\[
\Br(R'/R) = \ker(\Br(R)\lra\Br(R')).
\]
Similarly we can define a \emph{relative $L$-local Brauer
group}
\[
\Br_L(R'/R) = \ker(\Br_L(R)\lra\Br_L(R')).
\]
\end{defn}

In practise, we will use this when $R\lra R'$ is a faithful
$G$-Galois extension for some finite group~$G$.

\section{Galois extensions and  Azumaya algebras}
\label{sec:Gal->Azumaya}

Consider a map of commutative $S$-algebras $A \lra B$, which
we often denote by $B/A$. If~$A$ is cofibrant as a commutative
$S$-algebra, $B$ is cofibrant as a commutative $A$-algebra,
and if $G$ is a finite group which acts on $B$ by morphisms of
commutative $A$-algebras, then following Rognes~\cite{JR:Opusmagnus},
then we call $B/A$ a \emph{$G$-Galois extension} if the canonical
maps $i\: A \lra B^{hG}$ and $h \: B \wedge_A B \lra F(G_+,B)$
are weak equivalences.

In addition to these conditions, we will assume that $B$ is
faithful as an $A$-module spectrum. This is a further restriction
as there are examples of Galois extensions which are not faithful.
The following example is due to Wieland (see \cite{JR:notff}).
\begin{rem}\label{rem:Wieland}
Let $p$ be a prime. Then the $\Z/p$-Galois extension
\begin{equation*}
F(B\Z/p_+,H\F_p) \lra F(E\Z/p_+,H\F_p) \simeq H\F_p
\end{equation*}
is not faithful. To its eyes the $\Z/p$-Tate spectrum of $H\F_p$
appears trivial, but it is not.
\end{rem}

Let $B\<G\>$ be the twisted group algebra over $B$, \ie, the
$A$-algebra whose underlying $A$-module is $B \wedge G_+$ and 
whose multiplication is the composition $\tilde{\mu}$
\begin{equation*}
\xymatrix{
{B \wedge G_+ \wedge B \wedge G_+} \ar[rr]^{\id \wedge \Delta\wedge\id\ph{abc}}
\ar@/_20pt/[ddrrrr]_{\tilde{\mu}} & &  {B \wedge G_+ \wedge G_+ \wedge
  B \wedge  G_+}
\ar[rr]^{\ph{abc}\id\wedge \nu \wedge \id} & & {B \wedge G_+  \wedge B \wedge G_+}
\ar[d]^{(2,3)} \\
& & & & {B \wedge B  \wedge G_+ \wedge G_+} \ar[d]^{\mu_B \wedge \mu_G} \\
& & & &  {B \wedge G_+}}
\end{equation*}
where $\Delta$ is the diagonal, $\nu$ denotes the $G$-action
on $B$, $\mu_B$ is the multiplication of $B$ and $\mu_G$ the
multiplication in $G$. Then $\tilde{\mu}$ factors through
$(B\wedge G_+)\wedge_A(B\wedge G_+)$ and turns $B\<G\>$ into
an $A$-algebra. Note that $B\<G\>$ is an associative algebra
but in general it lacks commutativity. More precisely, we
know that the morphism $j\: B\<G\> \lra F_A(B,B)$ is a weak
equivalence of $A$-algebras for every $G$-Galois extension
$A \lra B$. In particular, $B\<G\>$ gives rise to a trivial
element in the Brauer group of $A$.

\begin{lem}\label{lem:Azumaya-module}
Let $B/A$ be a faithful $G$-Galois extension and let $M$ 
be a $B\<G\>$-module which is of the form $B \wedge_A N$ 
for some $A$-module $N$, where the $B\<G\>$-module structure 
is given by the $B$-factor of $B \wedge_A N$. Then there 
is a weak equivalence of $A$-modules $N \simeq M^{hG}$.
\end{lem}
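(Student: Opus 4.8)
The key idea is to reduce the statement about $M^{hG}$ to a statement about cotensor products or fixed points that we can compute directly using the Galois condition. Recall that for a $G$-Galois extension $B/A$, we have the fundamental equivalence $h\: B \wedge_A B \xrightarrow{\simeq} F(G_+,B)$, and $B$ is faithful over $A$. Given $M = B \wedge_A N$ with its $B\<G\>$-module structure coming from the $B$-factor, the plan is to compute the homotopy fixed point spectrum $M^{hG} = F(EG_+, M)^G$ and identify it with $N$.

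First I would observe that since $B\<G\> \simeq F_A(B,B)$ as $A$-algebras, a $B\<G\>$-module is the same as a module over the endomorphism algebra $F_A(B,B)$, and Morita theory (as recalled in the excerpt, via~\cite[theorem~1.2]{AB&AL:Morita}, applied with the faithful dualizable module $B$) tells us that the category of $B\<G\>$-modules is equivalent to the category of $A$-modules, with the equivalence sending an $A$-module $N$ to $B \wedge_A N$ and its inverse sending $M$ to $F_{B\<G\>}(B, M)$ (where $B$ carries its canonical $B\<G\>$-module structure). So the real content is to show that, for $M$ of the special form $B \wedge_A N$, the homotopy fixed points $M^{hG}$ compute the same thing as the Morita inverse $F_{B\<G\>}(B,M)$, i.e.\ that $M^{hG} \simeq N$.

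Next I would make this concrete. The $G$-action on $M = B \wedge_A N$ relevant here is the diagonal-type action coming from the $B\<G\>$-structure, which on the $B$-factor is the Galois action $\nu$. Then $M^{hG} = (B \wedge_A N)^{hG}$, and since $N$ has trivial $G$-action, I want to commute homotopy fixed points past $\wedge_A N$. Because $N$ need not be dualizable this is not automatic, so instead I would use the Galois descent: the key computation is $(B \wedge_A B)^{hG} \simeq B$, which follows from $B \wedge_A B \simeq F(G_+,B)$ together with $F(G_+,B)^{hG} \simeq F(G_+,B)^{hG} \simeq B$ (the homotopy fixed points of a cofree/induced $G$-object). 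More precisely, one writes $M^{hG} = F(EG_+, B \wedge_A N)^G$ and uses the equivalence $i\: A \xrightarrow{\simeq} B^{hG}$ at the base together with the projection formula / the fact that $B \wedge_A (-)$ preserves the relevant (co)limits up to the faithfulness argument. The map $N \to M^{hG}$ itself is the composite $N \simeq A \wedge_A N \xrightarrow{i \wedge \id} B^{hG} \wedge_A N \to (B \wedge_A N)^{hG} = M^{hG}$.

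The main obstacle I expect is precisely the commutation of $(-)^{hG}$ with $- \wedge_A N$ when $N$ is an arbitrary (not necessarily dualizable or finite) $A$-module: homotopy fixed points are a homotopy limit and need not commute with smash products in general. I would handle this by reducing to the universal case $N = A$ (where $M = B$ and $M^{hG} = B^{hG} \simeq A = N$ by the Galois hypothesis) and then bootstrapping: the class of $A$-modules $N$ for which the natural map $N \to (B \wedge_A N)^{hG}$ is an equivalence contains $A$, is closed under arbitrary wedges, suspensions, desuspensions, and cofibre sequences (using that both $B \wedge_A(-)$ and, after the identification with $F(G_+,B) \wedge_A (-)$, the fixed-point functor behave well here — the point being that $F(G_+,B) \wedge_A N \simeq F(G_+, B \wedge_A N)$ since $G$ is finite, so homotopy fixed points become a finite limit that does commute with smashing), hence is all of $\mathscr{M}_A$ by a cell-induction argument. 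Faithfulness of $B$ over $A$ is what guarantees the final detection step if one prefers to check the map is an equivalence after smashing with $B$.
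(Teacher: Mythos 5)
Your closing remarks---base change along $A\lra B$, the identification $B\wedge_A B\wedge_A N\simeq F(G_+,B)\wedge_A N\simeq F(G_+,B\wedge_A N)$ using the Galois equivalence $h$ and the finiteness of $G$, the equivalence $(B\wedge_A M)^{hG}\simeq B\wedge_A M^{hG}$ coming from dualizability of $B$ over $A$, and detection of the resulting equivalence by faithfulness of $B$---are exactly the paper's proof, and they already constitute a complete argument. The Morita-theoretic framing is reasonable motivation but is not needed.

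The cell-induction route you foreground, however, has a gap at closure under arbitrary wedges. Homotopy fixed points $F(EG_+,-)^G$ for a finite group form a homotopy limit over an infinite complex, not a finite one, and they do not commute with infinite coproducts in general; moreover $B\wedge_A N$ itself is not cofree---only $B\wedge_A(B\wedge_A N)$ is identified with $F(G_+,B\wedge_A N)$, so the cofreeness you appeal to is visible only after a further base change to $B$, which is precisely the non-inductive argument. (One could rescue the wedge step by invoking the norm equivalence $(B\wedge_A N)_{hG}\simeq(B\wedge_A N)^{hG}$, valid for faithful Galois extensions since the Tate construction on $B$ vanishes and $B\wedge_A N$ is a $B$-module, homotopy orbits then commuting with wedges; but that is a substantial extra input, and the direct argument via faithful detection avoids it entirely.) I recommend dropping the induction and promoting your final sentence to the proof.
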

\begin{proof}
Consider $B \wedge_A M = B \wedge_A B \wedge_A N$. As $B$
is $G$-Galois over $A$, the latter term is equivalent to
$F(G_+,B)\wedge_A N$ and this in turn is equivalent to
$F(G_+,B \wedge_A N)$ because $G_+$ is finite. As $B$ is
dualizable over $A$, the homotopy fixed point spectrum
$(B\wedge_A M)^{hG}$ is equivalent to $B \wedge_A M^{hG}$.

There is a chain of equivalences of $B$-modules
\begin{equation*}
B \wedge_A N \xrightarrow{\;\simeq\;} F(G_+,B \wedge_A N)^{hG}
    \xrightarrow{\;\simeq\;} (B \wedge_A B \wedge_A N)^{hG}
                                     = (B \wedge_A M)^{hG}
    \xleftarrow{\;\simeq\;} B \wedge_A M^{hG},
\end{equation*}
and the result follows by faithfulness of $B$ over $A$.
\end{proof}

The following two results give analogues of Galois descent 
of algebraic Azumaya algebras as in~\cite[proposition~6.11]{Saltman}.
\begin{prop}\label{prop:Azumaya-fixedpts}
Suppose that\/ $C$ is an Azumaya algebra over\/ $B$ for 
which the natural morphism $B \wedge_A C^{hG}\lra C$ is 
a weak equivalence of $B\<G\>$-modules. Then $C^{hG}$ is 
also an Azumaya algebra over~$A$.
\end{prop}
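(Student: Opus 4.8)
The plan is to verify the three defining conditions of Definition~\ref{defn:Azumaya} for $C^{hG}$ as an $A$-algebra, using faithful flat descent along the $G$-Galois extension $B/A$ to reduce each statement to the corresponding (known) statement for $C$ over $B$. The guiding principle is that smashing with $B$ over $A$ is a conservative, symmetric-monoidal functor $\mathscr{M}_A \to \mathscr{M}_B$ which, because $C \simeq B \wedge_A C^{hG}$ as $B\langle G\rangle$-modules by hypothesis, carries $C^{hG}$ to $C$; and by Lemma~\ref{lem:Azumaya-module} the operations $B\wedge_A(-)$ and $(-)^{hG}$ are mutually inverse on modules of the relevant form.

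First I would record that $C^{hG}$ is an $A$-algebra: the $G$-action on $C$ is by $A$-algebra maps (this is part of what the $B\langle G\rangle$-module statement, suitably upgraded, should encode, or is built into the setup), so the homotopy fixed points inherit a multiplication; alternatively one transports the algebra structure across $C \simeq B\wedge_A C^{hG}$. Next, dualizability: $C$ is dualizable over $B$, and $B$ is dualizable over $A$ (being $G$-Galois and faithful), so $C$ is dualizable over $A$; since $C \simeq B \wedge_A C^{hG}$ and $B$ is faithful and dualizable over $A$, Lemma~\ref{lem:Azumaya-module} (or the argument of \cite[lemma~6.2.4]{JR:Opusmagnus} as used in Proposition~\ref{prop:basechange}(2)) lets me descend dualizability of $B\wedge_A C^{hG}$ to dualizability of $C^{hG}$ over $A$. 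For faithfulness: if $N$ is an $A$-module with $C^{hG}\wedge_A N \simeq *$, then $B \wedge_A C^{hG} \wedge_A N \simeq C \wedge_A N \simeq *$; now $C$ is faithful over $B$ and $C\wedge_A N \simeq C \wedge_B (B\wedge_A N)$, so $B\wedge_A N \simeq *$, and faithfulness of $B$ over $A$ gives $N \simeq *$ — this is exactly the descent-of-faithfulness argument already run in the proof of Proposition~\ref{prop:basechange}(2).

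The main point — and the step I expect to be the real obstacle — is condition~(2): showing that $\mu_{C^{hG}} \colon C^{hG} \wedge_A (C^{hG})^\op \to F_A(C^{hG}, C^{hG})$ is a weak equivalence. The strategy is to smash with $B$ over $A$ and identify the result with $\mu_C$. On the source, $B \wedge_A \big(C^{hG}\wedge_A (C^{hG})^\op\big) \simeq (B\wedge_A C^{hG})\wedge_B (B \wedge_A C^{hG})^\op \simeq C \wedge_B C^\op$, using that $B\wedge_A(-)$ is monoidal and the hypothesis on $C$. On the target I must check that the canonical map $B \wedge_A F_A(C^{hG}, C^{hG}) \to F_B(B\wedge_A C^{hG}, B\wedge_A C^{hG}) = F_B(C,C)$ is an equivalence; this is where dualizability of $C^{hG}$ over $A$ (just established) is essential, since for dualizable modules the function-spectrum functor commutes with base change — compare the duality map $\nu$ appearing in diagram~\eqref{eqn:mu}. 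Granting the compatibility of these identifications with the multiplication maps (a diagram chase using naturality of $\mu$ under base change), one concludes that $B\wedge_A \mu_{C^{hG}}$ is identified with $\mu_C$, which is an equivalence because $C$ is Azumaya over $B$; then faithfulness of $B$ over $A$ forces $\mu_{C^{hG}}$ itself to be a weak equivalence. Assembling conditions (1)–(3) gives that $C^{hG}$ is an Azumaya $A$-algebra.
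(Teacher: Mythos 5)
Your proof is correct and takes essentially the same route as the paper's: dualizability of $C^{hG}$ over $A$ via \cite[lemma~6.2.4]{JR:Opusmagnus}, descent of the $\mu$-equivalence by smashing with the faithful $A$-module $B$ and using the duality equivalence $B\wedge_A F_A(C^{hG},C^{hG})\simeq F_B(B\wedge_A C^{hG},B\wedge_A C^{hG})$, and the same two-step faithfulness argument through $C$ and $B$. The only place you go beyond the paper is in explicitly flagging that the identifications of source and target must be checked compatible with the multiplication maps; the paper leaves that naturality implicit.
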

\begin{proof}
We know from \cite[lemma~6.2.4]{JR:Opusmagnus} that the 
$A$-algebra $C^{hG}$ is dualizable as an $A$-module.

As $C$ is Azumaya over $B$, we know that $C\wedge_B C^\op\simeq F_B(C,C)$.
Also, dualizability of $C^{hG}$ over $A$ guarantees that
\begin{align*}
    B \wedge_A F_A(C^{hG}, C^{hG}) &\simeq F_A(C^{hG},B \wedge_A C^{hG}) \\
    &\iso F_B(B \wedge_A C^{hG}, B \wedge_A C^{hG}) \\
    &\simeq F_B(C,C) \simeq C\wedge_B C^\op,
\end{align*}
and so
\begin{align*}
C \wedge_B C^\op
  &\simeq (B \wedge_A C^{hG})\wedge_B (B \wedge_A (C^{hG})^\op) \\
  &\simeq B \wedge_A (C^{hG} \wedge_A (C^{hG})^\op).
\end{align*}
As $B$ is faithful over $A$, this shows that
\begin{equation*}
C^{hG} \wedge_A(C^{hG})^\op \simeq  F_A(C^{hG}, C^{hG}).
\end{equation*}

Since $C$ is faithful as a $B$-module and $B$ is faithful 
as an $A$-module, we know that $C$ is faithful as an 
$A$-module. Assume that for an $A$-module $M$ we have
$C^{hG}\wedge_A M\simeq*$. This is the case if and only if
\begin{equation*}
B \wedge_A C^{hG} \wedge_A M \simeq C \wedge_A M \simeq *
\end{equation*}
because $B$ is a faithful $A$-module. Now faithfulness of
$C$ over $A$ implies that $C^{hG}$ is also faithful over~$A$.
\end{proof}

Suppose that $B/A$ is a faithful $G$-Galois extension in 
the sense of Rognes~\cite{JR:Opusmagnus}, where~$G$ is a 
finite group. Now let $H\lhd K\leq G$ so that $B/B^{hH}$ 
is a faithful $H$-Galois extension, $K$ acts on $B^{hH}$ 
by $B^{hK}$-algebra maps and $B^{hK}\lra B^{hH}$ is a 
faithful $K/H$-Galois extension, in particular,
\begin{equation}\label{eqn:semi-dir}
B^{hK} \simeq (B^{hH})^{h(K/H)}.
\end{equation}

By~\cite[lemma~6.1.2(b)]{JR:Opusmagnus}, the twisted group 
ring $B\<H\>\simeq F_{B^{hH}}(B,B)$ is an Azumaya algebra 
over $B^{hH}$, and $K$ acts on $B\<H\>$ by extending the 
action on $B$ by conjugation on~$H$, so we will write 
$B\<H_c\>$ to emphasize this.

If $K=Q\ltimes H$ is a semi-direct product or $H$ is abelian,
the quotient $Q=K/H$ acts by conjugation on~$H$.

Note that as in algebra, there is an isomorphism of $A[K]$-modules
\begin{equation*}
A[K] \iso \prod_K A.
\end{equation*}
The algebraic version of this isomorphism is given by
\begin{equation*}
\sum_{k \in K} a_k k \leftrightarrow (a_{k^{-1}})_{k \in K}
\end{equation*}
and we will use the topological analogue of this.

Our next result is based on~\cite[proposition~6.11(b)]{Saltman}.
\begin{prop}\label{prop:Saltman-6.11}
Suppose that $K=Q\ltimes H$ is a semi-direct product, or that
$H$ is abelian. Then the $B^{hK}$-algebra $B\<H_c\>^{hQ}$ is
Azumaya, and
\begin{equation*}
B^{hH}\wedge_{B^{hK}}B\<H_c\>^{hQ} \simeq B\<H_c\>.
\end{equation*}
Hence the Azumaya algebra $B\<H_c\>^{hQ}$ over $B^{hK}$ 
is split by~$B^{hH}$.
\end{prop}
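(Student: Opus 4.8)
The plan is to deduce this from Proposition~\ref{prop:Azumaya-fixedpts} applied to the faithful $K/H$-Galois extension $B^{hK}\lra B^{hH}$ (recall \eqref{eqn:semi-dir}, and note $Q\cong K/H$ under the running hypotheses). We take $C = B\<H_c\>$, which by~\cite[lemma~6.1.2(b)]{JR:Opusmagnus} is an Azumaya algebra over $B^{hH}$, and we equip it with the residual $Q$-action coming from conjugation on $H$ and the given $K$-action on $B$. The conclusion of Proposition~\ref{prop:Azumaya-fixedpts} is exactly that $C^{hQ} = B\<H_c\>^{hQ}$ is Azumaya over $(B^{hH})^{hQ}\simeq B^{hK}$, and the splitting statement $B^{hH}\wedge_{B^{hK}}B\<H_c\>^{hQ}\simeq B\<H_c\>$ is then immediate from the same proposition's hypothesis, which is what the last sentence of the proposition asserts.

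So the real content is verifying the hypothesis of Proposition~\ref{prop:Azumaya-fixedpts} in this situation: that the natural morphism
\[
B^{hH}\wedge_{B^{hK}} C^{hQ}\lra C
\]
is a weak equivalence of $(B^{hH})\<Q\>$-modules. First I would invoke Lemma~\ref{lem:Azumaya-module} for the $K/H$-Galois extension $B^{hK}\lra B^{hH}$: if $C$ is of the form $B^{hH}\wedge_{B^{hK}} N$ for some $B^{hK}$-module $N$, with the $Q$-action living in the $B^{hH}$-factor, then automatically $N\simeq C^{hQ}$ and the natural map is an equivalence. Thus it suffices to exhibit such an $N$, i.e.\ to show $B\<H_c\>$ is extended from $B^{hK}$ along $B^{hK}\to B^{hH}$ in a $Q$-equivariant way.

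This is where the algebraic model $A[K]\cong\prod_K A$ flagged just before the statement enters. In the semi-direct-product case $K=Q\ltimes H$, one has a decomposition $B\<H\>\simeq B\wedge H_+$ and, since $B/B^{hK}$ is $G$-Galois hence (restricting) $K$-Galois with $B\wedge_{B^{hK}}B\simeq F(K_+,B)$, one can rewrite $B\wedge H_+$ as $B^{hH}\wedge_{B^{hK}}$ applied to an appropriate twisted group ring over $B^{hK}$; the conjugation $Q$-action on $H$ matches the residual $Q$-action, using the topological analogue of $A[K]\cong\prod_K A$ to identify the relevant fixed points. Concretely I expect $N\simeq B^{hK}\<K_c\>$ or a closely related twisted object, with the equivalence $B^{hH}\wedge_{B^{hK}} N\simeq B\<H_c\>$ coming from $B^{hH}\wedge_{B^{hK}}B^{hK}\<Q\>\simeq B^{hH}\<Q\>$ together with $B\simeq B^{hH}\wedge_{B^{hK}}B^{hK}\<Q/?\>$-type identifications; the abelian case is handled the same way since there $Q=K/H$ acts by conjugation automatically. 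The main obstacle is bookkeeping the $Q$-equivariant structure through these identifications — making sure the $Q$-action that results on $C^{hQ}$ from Proposition~\ref{prop:Azumaya-fixedpts} is the conjugation one and that the module structure over $(B^{hH})\<Q\>$ is the intended one — rather than any homotopy-theoretic subtlety; once $N$ is produced with its equivariant structure, Lemma~\ref{lem:Azumaya-module} and Proposition~\ref{prop:Azumaya-fixedpts} finish the argument formally.
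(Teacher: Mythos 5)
Your reduction is the same as the paper's: regard $B^{hK}\lra B^{hH}$ as a faithful $Q$-Galois extension, set $C=B\<H_c\>$, and feed it to Proposition~\ref{prop:Azumaya-fixedpts}, so that everything rests on showing the natural map $B^{hH}\wedge_{B^{hK}}B\<H_c\>^{hQ}\lra B\<H_c\>$ is an equivalence of $B^{hH}\<Q\>$-modules. But your method for that step has a genuine gap. Lemma~\ref{lem:Azumaya-module} only goes one way: \emph{if} $C$ is exhibited as $B^{hH}\wedge_{B^{hK}}N$ with $Q$ acting solely through the $B^{hH}$-factor, \emph{then} $N\simeq C^{hQ}$ and the map is an equivalence. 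Producing such an $N$ is precisely the hard part. The $Q$-action on $B\<H_c\>\simeq B\wedge H_+$ is genuinely twisted --- it acts on $B$ through $K$ and on $H_+$ by conjugation --- so it is not visibly concentrated in a ``$B^{hH}$-factor'', and asserting that it can be untwisted over $B^{hK}$ amounts to Galois descent for semilinear modules, which the paper neither proves nor has available; it is essentially the statement being established. Your concrete candidate $N\simeq B^{hK}\<K_c\>$ is also ruled out on rank grounds: $B^{hH}\wedge_{B^{hK}}B^{hK}\<K_c\>$ has rank $|K|=|Q|\,|H|$ over $B^{hH}$, whereas $B\<H_c\>$ has rank $|H|^2$ over $B^{hH}$, and these agree only when $|Q|=|H|$ (the correct answer, visible in the paper's proof, is an object of rank $|H|^2$ over $B^{hK}$).

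What the paper actually does to verify the hypothesis is different and is the real content. Since the map in question is a map of modules over $B^{hK}$ and the extension is faithful, one may check that it is an equivalence after a faithful base change that splits the Galois extension, i.e.\ one may assume $A=B^{hK}$ and $B=\prod_K A$. In that split situation the equivariant shear map gives an isomorphism of $A[K]$-modules ${}_{\mathrm{diag}}\bigl(\prod_K A\wedge_A A[H_c]\bigr)\iso{}_{\mathrm{left}}A[K\times H]$, untwisting the conjugation action; since $K$ is a free $Q$-set with orbit set $H$, the $Q$-homotopy fixed points are identified as $B\<H_c\>^{hQ}\simeq A[H\times H]$, and the natural map is then shown to be a $\pi_*$-isomorphism by applying the algebraic statement \cite[proposition~6.11(b)]{Saltman} to the graded ring $\pi_*(A)$. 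None of these ingredients --- the faithful base change to the trivial extension, the shear untwisting, the explicit computation of the fixed points, or the algebraic input from Saltman --- appears in your sketch; the ``bookkeeping'' you defer is exactly where the proof lives.
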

\begin{proof}
Note that we can assume that $G=K$ and $B^{hK}=A$. Making 
use of a faithful base change, it suffices to assume that 
$B$ is the trivial $K$-Galois extension, $B=\prod_K A$.

There are isomorphisms of $A[K]$-modules
\begin{align}
B\<H_c\> &\iso {}_{\mathrm{diag}}(\prod_K A \wedge_{A} A[H_c]) \notag\\
       &\iso {}_{\mathrm{left}}(\prod_K A \wedge_A A[H]) \notag\\
       &\iso {}_{\mathrm{left}}(A[K] \wedge_A A[H]) \notag\\
       &\iso {}_{\mathrm{left}}(A[K\times H]), \label{eqn:eqt-iso}
\end{align}
where ${}_{\mathrm{diag}}(-)$ and ${}_{\mathrm{left}}(-)$ indicate 
the diagonal and left $K$-actions respectively, the second isomorphism 
is the standard equivariant shear map similar to the map $\mathrm{sh}$
of~\cite[section~3.5]{JR:Opusmagnus}, and $K\times H$ is viewed as 
a $K$-set through the action on the left hand factor. As a $Q$-set, 
$K$ decomposes into free orbits indexed on $H$. On taking $Q$-homotopy
fixed points we obtain an equivalence of $A$-modules
\begin{equation}\label{eqn:eqt-iso-hfixed}
B\<H_c\>^{hQ} \iso A[H\times H].
\end{equation}
There is a map of $A$-modules
\begin{equation*}
B^{hH} \xrightarrow{\;\mathrm{unit}\;}
       B^{hH}\wedge_A B\<H_c\>^{hQ} \lra B\<H_c\>
\end{equation*}
which is also a map of $B^{hH}\<Q\>$-modules. Applying $\pi_*(-)$
and working algebraically with $\pi_*(A)$-modules,
using~\cite[proposition~6.11(b)]{Saltman} it follows that we have
an isomorphism
\begin{equation*}
\pi_*(B^{hH}\wedge_A B\<H_c\>^{hQ}) \iso \pi_*(B\<H_c\>),
\end{equation*}
and therefore a weak equivalence
\begin{equation*}
B^{hH}\wedge_A B\<H_c\>^{hQ} \xrightarrow{\;\simeq\;} B\<H_c\>
\end{equation*}
of $B^{hH}\<Q\>$-modules. Now Proposition~\ref{prop:Azumaya-fixedpts}
shows that $B\<H_c\>^{hQ}$ is Azumaya over $B^{hK}$.
\end{proof}

\section{Cyclic algebras}
\label{sec:cyclic}

In this section, we will assume that~$K\to L$ is a faithful
Galois extension of commutative~$S$-algebras, and that the
Galois group~$G=\Gal(L/K)$ is generated by an element~$\sigma$
of order~$n$, say. In particular, this group has to be cyclic.
The choice of a generator corresponds to an isomorphism~$\Z/n\cong G$,
whose inverse can be thought of as a (primitive) character of~$G$.
Last but not least, we also need a strict unit~$u$ in~$K$. For
the time being, this just means that there is an action of the
group~$\Z$ of integers on the spectrum~$K$ via maps of~$K$-modules.
We will extend this action to~$L$ without change of notation. The
strictness of $u$ is needed in order to ensure that relations hold
on the nose and this in turn is necessary to obtain a strictly
associative algebra extension.

Cyclic~$K$-algebras will be defined here via Galois descent
from matrix algebras over~$L$. As a model for the matrix
algebra we use
\begin{equation*}
	M_n(L)=\bigvee_{i,j=1}^nL_{i,j},
\end{equation*}
with all~$L_{i,j}=L$ and multiplication given on summands
\begin{equation*}
	L_{i,j}\wedge_K L_{j,k}\lra L_{i,k}
\end{equation*}
by the multiplication in~$L$. One could also work with the
endomorphism $K$-algebra spectrum $F_K(\bigvee_{n}L,\bigvee_{n}L)$,
but this mixes covariant and contravariant behaviour in $\bigvee_n L$
and that is inconvenient for the explicit formulae that we need.

The cyclic group~$\Z/n$ acts on the~$L$-algebra~$M_n(L)$
component-wise, \ie, the generator acts as~$\sigma\: L_{i,j}\lra L_{i,j}$
on each summand. The multiplication and unit~$L\lra M_n(L)$
are equivariant, and we have equivalences
\begin{equation*}
	M_n(L)^{h\Z/n}\simeq M_n(L^{h\Z/n})\simeq M_n(K)
\end{equation*}
of~$K$-algebras. Something possibly more interesting happens
when we twist this action with the chosen unit~$u$: we define
a self-map on~$M_n(L)$ as the composition
\begin{equation}\label{eq:twistedaction}
\xymatrix@1@C=50pt{
	L_{i,j}\ar[r]^-{\id}&
	L_{i+1,j+1}\ar[r]^-{u^{\delta_{i,n}-\delta_{j,n}}}&
	L_{i+1,j+1}\ar[r]^-{\sigma}&
	L_{i+1,j+1},
	}
\end{equation}
where the indices~$i+1$ and~$j+1$ are read modulo~$n$,
and~$\delta$ is the Kronecker symbol.

\begin{lem} \label{lem:multiplicativity}
The above self-map generates a~$\Z/n$-action on~$M_n(L)$
as an associative~$K$-algebra.
\end{lem}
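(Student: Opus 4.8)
The plan is to verify directly that the twisted self-map $\tau$ defined by \eqref{eq:twistedaction} satisfies $\tau^n = \id$ and is a $K$-algebra automorphism of $M_n(L)$, so that it generates a genuine $\Z/n$-action. First I would set up careful bookkeeping: on the summand $L_{i,j}$ the map $\tau$ is the composite "shift indices by $(1,1)$ mod $n$, multiply by $u^{\delta_{i,n}-\delta_{j,n}}$, then apply $\sigma$." Iterating $r$ times, $\tau^r$ sends $L_{i,j}$ to $L_{i+r,j+r}$ (indices mod $n$), applies $\sigma^r$, and multiplies by $u$ raised to the power $\sum_{t=0}^{r-1}(\delta_{i+t,n}-\delta_{j+t,n})$, where each $\delta$ is the indicator that the running index has wrapped around past $n$. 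The exponent telescopes: after $r=n$ steps each of $i$ and $j$ has passed the value $n$ exactly once, so the total exponent of $u$ is $1-1=0$; and $\sigma^n=\id$ since $\sigma$ has order $n$; and the index shift by $n$ is the identity mod $n$. Hence $\tau^n=\id$, which is the order claim.

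Next I would check multiplicativity, \ie\ that $\tau$ commutes with the algebra structure map $L_{i,j}\wedge_K L_{j,k}\to L_{i,k}$ given by multiplication in $L$. Here there are three things to compare. On the index level, the source summand $L_{i,j}\wedge_K L_{j,k}$ is sent by $\tau\wedge\tau$ to $L_{i+1,j+1}\wedge_K L_{j+1,k+1}$, which multiplies into $L_{i+1,k+1}$; and $\tau$ applied to the target $L_{i,k}$ also lands in $L_{i+1,k+1}$, so the indices match. On the $\sigma$ level, since $\sigma$ is a $K$-algebra map of $L$ it commutes with the multiplication $L\wedge_K L\to L$, so applying $\sigma$ on each factor and then multiplying agrees with multiplying and then applying $\sigma$. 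The only genuine content is the power of $u$: on the source the two factors contribute $u^{\delta_{i,n}-\delta_{j,n}}$ and $u^{\delta_{j,n}-\delta_{k,n}}$, whose product carries a total exponent $\delta_{i,n}-\delta_{k,n}$ (the $\delta_{j,n}$ terms cancel), which is exactly the exponent $\tau$ inserts on the target $L_{i,k}$. This cancellation is the reason the twist was chosen in the form $u^{\delta_{i,n}-\delta_{j,n}}$, and it is the crux of the verification.

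Finally I would note that $\tau$ is $K$-linear (it is built from $\id$, the $K$-module map $\sigma$, and multiplication by $u$, all of which are maps of $K$-modules) and respects the unit map $L\to M_n(L)$ up to the $\sigma$-twist, so that $\tau$ is an automorphism of $M_n(L)$ as an associative $K$-algebra; together with $\tau^n=\id$ this gives the asserted $\Z/n$-action. The main obstacle is purely combinatorial rather than homotopy-theoretic: keeping the Kronecker-symbol exponents and the index-wraparound conventions consistent across the iteration and across the multiplication, so that one really sees the exponents telescope to $0$ in the $\tau^n$ computation and cancel in the multiplicativity check. There is a minor subtlety in that these equalities of maps of $K$-modules should be arranged to hold strictly, not just up to homotopy — this is where the strictness of the unit $u$ is used, exactly as flagged in the text preceding the lemma.
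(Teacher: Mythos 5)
Your verification is correct and is precisely the ``as in algebra'' computation that the paper's own (very terse) proof invokes: the cancellation of the $\delta_{j,n}$-exponents in your multiplicativity check is exactly the point where the paper's stated key input $u\wedge_K L = L\wedge_K u$ on $L\wedge_K L$ (centrality of $u$, since it comes from $K$) is used, and your telescoping argument for $\tau^n=\id$ fills in the rest. Nothing further is needed.
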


\begin{proof}
This follows as in algebra using the fact that
\begin{equation*}
u\wedge_{K}L=L\wedge_K u\:
	L \wedge_{K}L \lra L \wedge_{K} L
\end{equation*}
since~$u$ is a unit in~$K$. This guarantees that the~$K$-algebra
multiplication on~$L$ behaves well with respect to the twisted
action. Together with the naturality and symmetry of the fold
map this proves the claim.
\end{proof}

\begin{defn}\label{defn:CyclicAlg}
The \emph{cyclic~$K$-algebra}
\begin{equation*}
		A(L,\sigma,u)=M_n(L)^{h\Z/n},
\end{equation*}
associated with~$L$,~$\sigma$, and~$u$ is obtained
from the matrix algebra~$M_n(L)$ with the
twisted~$\Z/n$-action by passage to homotopy fixed
points.
\end{defn}

The following result shows that the cyclic~$K$-algebra
$A(L,\sigma,u)$ defines a class in the relative Brauer
subgroup~$\Br(L/K)$ of~$\Br(K)$.

\begin{thm}\label{thm:CyclicAlg-splits}
The cyclic algebra~$A(L,\sigma,u)$ is an Azumaya algebra
over~$K$ which splits over~$L$.
\end{thm}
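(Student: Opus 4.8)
The plan is to identify $A(L,\sigma,u)$ as the $\Z/n$-homotopy fixed points of an Azumaya algebra over $L$ and then invoke the Galois descent machinery established in Proposition~\ref{prop:Azumaya-fixedpts}. First I would observe that $M_n(L)$ is a trivial Azumaya algebra over $L$: indeed $M_n(L)\simeq F_L(\bigvee_n L,\bigvee_n L)=\mathcal{E}_L(\bigvee_n L)$, and $\bigvee_n L$ is a faithful, dualizable, cofibrant $L$-module, so Proposition~\ref{prop:End-Az} applies. Hence $M_n(L)$ is Azumaya over $L$. (The mild nuisance that we have chosen the explicit wedge model $\bigvee_{i,j}L_{i,j}$ rather than the endomorphism model is immaterial up to weak equivalence of $L$-algebras, and one only needs the Azumaya property, which is homotopy invariant.)

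Next I would verify the descent hypothesis of Proposition~\ref{prop:Azumaya-fixedpts}, namely that the natural morphism
\[
L\wedge_K M_n(L)^{h\Z/n}\lra M_n(L)
\]
is a weak equivalence of $L\langle\Z/n\rangle$-modules, where $M_n(L)$ carries the \emph{twisted} $\Z/n$-action of~\eqref{eq:twistedaction}. For this I would argue component-wise. As a $\Z/n$-equivariant $L$-module, $M_n(L)=\bigvee_{i,j}L_{i,j}$ decomposes according to the action of the generator on index pairs $(i,j)\mapsto(i+1,j+1)$ modulo $n$; the orbits are indexed by the difference $j-i\in\Z/n$, and each orbit is free of size $n$. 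On each free orbit the twisted action is, up to the unit twist $u^{\delta_{i,n}-\delta_{j,n}}$ (which is a unit in $K$, hence an $L$-module equivalence), the action of $\Z/n$ on $L$ induced by $\sigma$ smashed with the permutation action on the orbit. Thus each orbit summand is equivalent, as a $\Z/n$-equivariant $L$-module, to $L\langle\Z/n\rangle$ with its left action, whose homotopy fixed points give a rank-one free $K$-module and for which $L\wedge_K(-)^{h\Z/n}\to(-)$ is an equivalence because $L/K$ is a faithful $\Z/n$-Galois extension (this is exactly the content of the shear/ff descent statement, cf.~\cite[section~3.5, lemma~6.2.4]{JR:Opusmagnus}). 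Reassembling over the finitely many orbits gives the claim. One must check that the unit-twist really is $\Z/n$-equivariantly trivializable; here the identity $u\wedge_K L=L\wedge_K u$ from Lemma~\ref{lem:multiplicativity} is what makes the twist wash out after base change to $L$, and this is the step I expect to require the most care.

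Given these two facts, Proposition~\ref{prop:Azumaya-fixedpts} immediately yields that $A(L,\sigma,u)=M_n(L)^{h\Z/n}$ is an Azumaya algebra over $K=L^{h\Z/n}$. Finally, the displayed equivalence $L\wedge_K A(L,\sigma,u)\simeq M_n(L)\simeq \mathcal{E}_L(\bigvee_n L)$ exhibits $L\wedge_K A(L,\sigma,u)$ as a trivial Azumaya algebra over $L$, so $[L\wedge_K A(L,\sigma,u)]=0$ in $\Br(L)$; that is, $[A(L,\sigma,u)]\in\ker(\Br(K)\to\Br(L))=\Br(L/K)$, which is precisely the statement that $A(L,\sigma,u)$ splits over $L$. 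The main obstacle, as noted, is the equivariant bookkeeping needed to see that the twisted action becomes the untwisted one after smashing with $L$; everything else is a formal consequence of Proposition~\ref{prop:Azumaya-fixedpts} and Proposition~\ref{prop:End-Az}.
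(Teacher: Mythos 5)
Your proposal is correct, and its skeleton --- exhibit $M_n(L)$ as a trivial Azumaya algebra over $L$, verify the descent hypothesis of Proposition~\ref{prop:Azumaya-fixedpts} for the twisted action, and read off the splitting from the resulting equivalence $L\wedge_K A(L,\sigma,u)\simeq M_n(L)$ --- is exactly the paper's. Where you genuinely diverge is in how the key equivalence $L\wedge_K M_n(L)^{h\Z/n}\to M_n(L)$ is established. You decompose $M_n(L)$ \emph{before} base change into the $n$ free $\Z/n$-orbits indexed by $j-i$, untwist each orbit to a copy of $L\langle\Z/n\rangle\simeq L\wedge_K K[\Z/n]$ via the shear map (the point being that the product of the unit twists around a full cycle is $u^{1-1}=u^0$, so the twist is a coboundary on each orbit --- this is precisely where the strictness of $u$ and Lemma~\ref{lem:multiplicativity} enter), and then apply Lemma~\ref{lem:Azumaya-module} orbit by orbit. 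The paper instead base-changes first: dualizability of $L$ over $K$ gives $L\wedge_K M_n(L)^{hG}\simeq(L\wedge_K M_n(L))^{hG}$ with $G$ acting only on the right-hand factor, and the Galois equivalence $L\wedge_K L\simeq\Map(G_+,L)$ identifies $L\wedge_K M_n(L)$ with the coinduced module $\Map(G_+,M_n(L))$, whose homotopy fixed points are $M_n(L)$; the unit twist is absorbed because a coinduced module remains coinduced after twisting by any further action on the target. Your route is more explicit about why the twist washes out (the paper leaves this implicit) and essentially reproves the induced-module structure on $\pi_*M_n(L)$ that the paper only exploits afterwards to compute $\pi_*A(L,\sigma,u)\cong(\pi_*L)^{\oplus n}$; the paper's route is shorter and avoids the equivariant bookkeeping you rightly flag as the delicate step. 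Both are valid.
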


\begin{proof}
Proposition~\ref{prop:Azumaya-fixedpts} above says that
if we have an Azumaya algebra~$B$ over~$L$ with a
compatible~$G$-action such that the natural morphism
\begin{equation*}
L\wedge_KB^{hG}\lra B
\end{equation*}
is an equivalence of~$L \< G\>$-modules, then~$A=B^{hG}$
is also an Azumaya algebra over~$K$. We want to apply
this here to the situation~$B=M_n(L)$ and~$G\cong\Z/n$.

Since~$L$ is a dualizable~$K$-module,
\begin{equation*}
L\wedge_{K}M_n(L)^{hG}
                    \simeq (L\wedge_{K}M_n(L))^{hG},
\end{equation*}
where~$G$ acts only on the right hand factor in
$L\wedge_{K}M_n(L)$.

As~$L$ is a~$G$-Galois extension of~$K$, we have
$L\wedge_{K}L\simeq \Map(G_+,L)$, and therefore
\begin{equation*}
L\wedge_{K}M_n(L) \cong M_n(L\wedge_{K}L)
                      \simeq M_n(\Map(G_+,L))
                      \simeq \Map(G_+,M_n(L)).
\end{equation*}
As the latter is equivariantly equivalent to~$L[G]$,
we see that
\begin{equation*}
	L\wedge_{K}M_n(L)^{hG} \simeq M_n(L)
\end{equation*}
and this yields the result.
\end{proof}

Of course, it may happen that a cyclic~$K$-algebra
$A(L,\sigma,u)$ represents the trivial element in
the Brauer group~$\Br(K)$ of~$K$. This depends very
much on the chosen unit~$u$, for example. One way
to prove non-triviality is to compute the homotopy
groups of~$A(L,\sigma,u)$, and to compare the result
with the homotopy groups of the representatives of
the elementary Azumaya algebras.

For this and other reasons, it is useful to know that
one may calculate the homotopy groups of the cyclic
algebra~$A(L,\sigma,u)$ by means of the homotopy fixed
point spectral sequence
\begin{equation}\label{eqn:KU-Quat-MS-LixPtSS}
\mathrm{E}_2^{s,t} = \mathrm{H}^s(\Z/n,\pi_tM_n(L))
                    \Lra\pi_{t-s}(M_n(L)^{h\Z/n}).
\end{equation}

The action on~$\pi_tM_n(L)\cong M_n(\pi_tL)$ is generated
by whatever the twisted action~\eqref{eq:twistedaction}
induces in homotopy:
\begin{equation*}
a_{i,j}\longmapsto
   \sigma_*u_*^{\delta_{i,n}-\delta_{j,n}}(a_{i+1,j+1}).
\end{equation*}
But, if we identify~$(\pi_tL)^{\oplus n}$ with the first
row (or column) of~$\pi_tM_n(L)$, we easily find
\begin{equation*}
	\pi_tM_n(L)\cong\Z[\Z/n]\otimes_\Z(\pi_tL)^{\oplus n},
\end{equation*}
which shows that~$\pi_tM_n(L)$ is an induced~$\Z/n$-module.
Therefore the~$E_2$-term of the homotopy fixed point spectral
sequence~\eqref{eqn:KU-Quat-MS-LixPtSS} vanishes above
the $0$-line, which shows
\begin{equation*}
	\pi_t A(L,\sigma,u)\cong(\pi_tL)^{\oplus n},
\end{equation*}
additively. In fact, this determines the underlying homotopy
type.

\begin{thm}\label{thm:Order2}
If~$n=2$, and if the unit $u$ has order $2$ in the sense that
it comes from a~$\Z/2$-action, then the class of~$A(L,\sigma,u)$
has order at most~$2$ in the Brauer group of~$\Br(K)$.
\end{thm}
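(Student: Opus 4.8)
The plan is to show that the square of the class $[A(L,\sigma,u)]$ in $\Br(K)$ is represented by the cyclic algebra $A(L,\sigma,u^2)$, and then to observe that the hypothesis ``$u$ has order $2$'' forces $u^2$ to be (equivalent to) the trivial unit, so that $A(L,\sigma,u^2)$ splits already over $K$. Since the class then has order dividing $2$, the theorem follows. Concretely, the first step is to set up the multiplication rule for the smash product $A(L,\sigma,u)\wedge_K A(L,\sigma,u)$ and match it with the descent presentation of $A(L,\sigma,u^2)$; this is exactly the topological analogue of the classical identity $(L,\sigma,u)\otimes_K(L,\sigma,v)\sim(L,\sigma,uv)$ modulo a matrix algebra, and one expects it to go through ``as in algebra'' because all the structure maps in the twisted action \eqref{eq:twistedaction} are built from $\sigma$, the unit $u$, and the fold map, each of which is strictly multiplicative by the strictness assumption on $u$ and Lemma~\ref{lem:multiplicativity}.

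For the first step I would work $\Z/2$-equivariantly over $L$. Since $M_2(L)\wedge_K M_2(L)\iso M_4(L\wedge_K L)\simeq M_4(\Map((\Z/2)_+,L))\simeq \Map((\Z/2)_+,M_4(L))$ by the Galois condition, the diagonal $\Z/2$-action on the left corresponds under this chain of equivalences to a twisted action on $M_4(L)$, and one checks (on summands $L_{i,j}$, reading indices modulo $2$) that after an equivariant reindexing this twisted action is precisely the one defining $M_4(L)$ with the unit twist coming from $u\cdot u=u^2$; taking homotopy fixed points then gives
\[
A(L,\sigma,u)\wedge_K A(L,\sigma,u) \simeq \bigl(M_4(L)\text{ with the }u^2\text{-twist}\bigr)^{h\Z/2}.
\]
The right-hand side is by construction a $4\times 4$ ``cyclic matrix algebra'' with twisting unit $u^2$; using the block decomposition $M_4=M_2(M_2)$ one identifies it with $M_2(K)\wedge_K A(L,\sigma,u^2)$, hence it is Brauer equivalent to $A(L,\sigma,u^2)$ by Theorem~\ref{thm:CyclicAlg-splits} and Proposition~\ref{prop:End-Az} (the $M_2(K)$ factor being an endomorphism algebra of a free, hence faithful dualizable, $K$-module).

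For the second step, the assumption that $u$ ``comes from a $\Z/2$-action'' means the $\Z$-action on $K$ through which $u$ was defined factors through $\Z\to\Z/2$; equivalently $u^2=u\cdot u$ is the identity self-map of $K$ up to the coherence built into that factorization, so the $u^2$-twist in \eqref{eq:twistedaction} reduces to the untwisted component-wise action. Therefore $A(L,\sigma,u^2)\simeq M_2(L)^{h\Z/2}\simeq M_2(K)$, which is a trivial Azumaya algebra by Proposition~\ref{prop:End-Az}. Combining the two steps, $2[A(L,\sigma,u)]=[A(L,\sigma,u^2)]=0$ in $\Br(K)$, so the order of $[A(L,\sigma,u)]$ divides $2$.

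The main obstacle I anticipate is making the first step genuinely rigorous at the point-set level rather than merely ``as in algebra'': one must produce an honest $\Z/2$-equivariant equivalence of \emph{associative $K$-algebras} between $M_2(L)\wedge_K M_2(L)$ (with the smash-product multiplication and diagonal action) and the $u^2$-twisted $M_4(L)$, and then an equivariant algebra equivalence of the latter with $M_2(K)\wedge_K M_n$-type pieces — all coherently enough that passing to homotopy fixed points is legitimate. The strictness of $u$ is precisely what lets the relevant diagrams commute on the nose, so the work is bookkeeping with the shear/fold maps of \cite[section~3.5]{JR:Opusmagnus} rather than a new idea; but it is the step where care is required, and where one should double-check that the reindexing of the $4$ summands is compatible with both the algebra structure and the $\Z/2$-action simultaneously.
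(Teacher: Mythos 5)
Your strategy is genuinely different from the paper's, and it is worth comparing the two. The paper never touches the multiplicativity of cyclic algebras in the unit $u$. Instead it uses the generality that $[A]^{-1}=[A^\op]$ in $\Br(K)$ (because $A\wedge_K A^\op\simeq F_K(A,A)$ is a trivial Azumaya algebra by Definition~\ref{defn:Azumaya}), and then exhibits a single $\Z/2$-equivariant equivalence of associative $K$-algebras $M_2(L)\simeq M_2(L)^\op$ modelled on matrix transposition, i.e.\ the permutation $L_{i,j}\lra L_{j,i}$ of summands. Compatibility of this permutation with the twisted action \eqref{eq:twistedaction} is exactly where the hypothesis enters: the exponent $\delta_{i,n}-\delta_{j,n}$ changes sign under transposition, so one needs $u=u^{-1}$. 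Passing to homotopy fixed points gives $A(L,\sigma,u)\simeq A(L,\sigma,u)^\op$, hence $[A]=-[A]$. This is a few lines and avoids any analysis of $A\wedge_K A$. Your route, via $2[A(L,\sigma,u)]=[A(L,\sigma,u^2)]=0$, is the other classical argument and would prove a stronger statement if completed, but it is substantially more work in this setting.

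The gap is in your first step, and you have correctly located it but not closed it. Two concrete points. First, $A(L,\sigma,u)\wedge_K A(L,\sigma,u)$ is the homotopy fixed point spectrum of $M_2(L)\wedge_K M_2(L)$ for the $\Z/2\times\Z/2$-action given by the twisted action on each factor separately, not for the diagonal $\Z/2$-action; you must first collapse one $\Z/2$-factor using the coinduced structure of $\Map((\Z/2)_+,M_4(L))$ before a residual single $\Z/2$-action on $M_4(L)$ appears, and only then identify its twist. Second, the identification of that residual twisted action with ``$M_2(K)$-block structure tensor the $u^2$-twisted $M_2(L)$'' is precisely the point where the classical proof of $(L,\sigma,a)\otimes_K(L,\sigma,b)\cong M_n((L,\sigma,ab))$ uses the idempotents of $L\otimes_K L$ and a non-obvious change of basis; describing it as a reindexing of the four summands understates it, and it is not supplied by Lemma~\ref{lem:multiplicativity}, which only concerns a single twisted action on one copy of $M_n(L)$. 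Until you actually construct the $\Z/2$-equivariant equivalence of associative $K$-algebras between $M_2(L)\wedge_K M_2(L)$ (with the correct residual action) and $M_2(M_2(L))$ with the $u^2$-twist concentrated in the inner block, the identity $2[A(L,\sigma,u)]=[A(L,\sigma,u^2)]$ is unproven. The second step of your argument (triviality of $A(L,\sigma,u^2)$ when $u$ comes from a $\Z/2$-action) is fine.
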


\begin{proof}
We prove that there is a~$\Z/2$-equivariant equivalence of
$K$-algebras between~$M_2(L)$ and~$M_2(L)^\op$. As in algebra,
this equivalence is given by the transposition of matrices,
which we have to model by the permutation~$L_{i,j}\lra L_{j,i}$.
Then the same proof as in algebra shows that this is a map of
associative~$K$-algebras. In order to show that the action passes
to the homotopy fixed points we have to prove that it is compatible
with the twisted action of~\eqref{eq:twistedaction} that we impose
on~$M_2(L)$. But this is trivial except for the~$u$-action, where
we have to use that $u=u^{-1}$ is a second root of unity.
\end{proof}

\section{Azumaya algebras over Eilenberg-Mac~Lane spectra}
\label{sec:EM}

In this section we consider the case of Azumaya algebras over
the Eilenberg-Mac~Lane spectrum of a commutative ring. In~\cite{To},
To\"en introduces the algebraic notion of a \emph{derived Azumaya
algebra} over a commutative ring as a special case of the more
general notion for simplicial rings. First we explain how the
topological and algebraic notions are related.

In~\cite[section~IV.2]{EKMM}, an equivalence of categories
\begin{equation}\label{eqn:HR->R}
\Psi\:\mathscr{D}_{HR} \lra \mathscr{D}_R
\end{equation}
is constructed, where $\Psi$ is defined on a CW $HR$-module
$M$ to be the cellular chain complex $C_*(M)$.
By~\cite[proposition~IV.2.5]{EKMM}, for CW $HR$-modules $M,N$
there are isomorphisms of chain complexes of $R$-modules
\begin{align*}
C_*(M\wedge_{HR}N) &\iso C_*(M)\otimes_R C_*(N), \\
C_*(F_{HR}(M,N)) &\iso \Hom_R(C_*(M),C_*(N)).
\end{align*}
The inverse functor $\Phi=\Psi^{-1}$ also preserves the
monoidal structure, thus we have an equivalence of symmetric
monoidal categories.

Following To\"en~\cite{To}, see remark~1.2, and \cite[theorem~1.5]{NJ}
we find that an Azumaya algebra $A$ over $HR$, corresponds to
a derived Azumaya algebra over $R$. Note that as we are working
with associative  (but not commutative) $HR$-algebras, a cofibrant
$HR$-algebra is a retract of a cell $HR$-module relative to~$HR$
by~\cite[theorem~VII.6.2]{EKMM}.

We get the following correspondence whose assumptions are satisfied
when $R$ is a principal ideal domain for instance.
\begin{prop}\label{prop:EM}
Let $R$ be a commutative ring such that for any finitely presented
$R$-module $M$ with $\Tor_k^R(M,M) = 0$ for $k>0$ we can deduce 
that $M$ is flat over $R$.

Let $T$ be an $R$-algebra. Then the $HR$-algebra $HT$ is a topological
Azumaya algebra if and only if\/ $T$ is an algebraic Azumaya $R$-algebra.
\end{prop}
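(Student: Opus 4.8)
The plan is to transport the three defining conditions of a topological Azumaya algebra across the equivalence of symmetric monoidal categories $\Psi\colon\mathscr{D}_{HR}\lra\mathscr{D}_R$ from~\cite{EKMM}, and to match each one with the corresponding requirement in the algebraic definition of an Azumaya $R$-algebra, namely: $T$ is finitely generated projective (equivalently, dualizable) as an $R$-module, $T$ is a faithful $R$-module, and the multiplication map $T\otimes_R T^\op\lra\End_R(T)$ is an isomorphism. The hypothesis on $R$ (that homological flatness detected by vanishing of higher self-$\Tor$ implies flatness) is exactly what is needed to promote \emph{dualizability in the derived category} to honest \emph{projectivity of a module}, which is where the two notions could a priori diverge.

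First I would record that, since $\Phi=\Psi^{-1}$ is monoidal, $HT$ is a dualizable $HR$-module if and only if its image $C_*(HT)$, which is quasi-isomorphic to $T$ concentrated in degree $0$, is a dualizable object of $\mathscr{D}_R$. A dualizable object of the derived category of $R$ is a perfect complex; a perfect complex with homology concentrated in a single degree and finitely presented there has finite $\Tor$-dimension, so its higher self-$\Tor$ groups vanish, and then the hypothesis on $R$ forces $T$ to be flat — hence finitely presented and flat, hence finitely generated projective. Conversely a finitely generated projective $R$-module is visibly dualizable in $\mathscr{D}_R$, so $HT$ is then dualizable over $HR$. Next, the monoidal equivalence sends the topological multiplication map $\mu\colon HT\wedge_{HR}HT^\op\lra F_{HR}(HT,HT)$ to the chain-level map underlying $T\otimes_R T^\op\lra\Hom_R(T,T)$; under the assumption that $T$ is projective over $R$ (already forced by condition~(1)), $\Hom_R(T,T)$ computes the derived endomorphisms, so $\mu$ is a weak equivalence of $HR$-algebras if and only if the algebraic map $T\otimes_R T^\op\to\End_R(T)$ is an isomorphism. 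Finally, faithfulness of $HT$ as an $HR$-module translates, again through $\Psi$, into the statement that $T\otimes^{\mathbb{L}}_R X\simeq 0$ implies $X\simeq 0$ for $X\in\mathscr{D}_R$; since $T$ is flat this says $T\otimes_R-$ reflects zero on modules, which is precisely faithfulness of $T$ over $R$. Assembling these three equivalences gives the stated biconditional.

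The main obstacle is the interplay between conditions~(1) and~(2) in the direction \emph{topological $\Rightarrow$ algebraic}: a priori dualizability of $HT$ only gives that $T$, viewed as a complex, is perfect, and one must rule out that $T$ acquires extra homological width. The point to get right is that a bounded complex of finitely generated projectives whose homology is a single finitely presented module $M$ in degree $0$ satisfies $\Tor^R_k(M,M)=0$ for $k>0$ — this needs the observation that $M$ itself, being a module of finite projective dimension over which condition~(2) (an isomorphism of the relevant Hochschild-type object) constrains the self-Tor, is forced by the hypothesis on $R$ to be flat; once flatness is known, everything else is formal. One should also note that passing between $HT$ as a cofibrant $HR$-\emph{algebra} (a retract of a cell $HR$-module relative to $HR$, by~\cite[theorem~VII.6.2]{EKMM}) and $T$ as an $R$-algebra is compatible with $\Psi$ on the level of multiplicative structure, so the algebra structures correspond and the identification of $\mu$ with the classical multiplication map is legitimate. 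With these points in place the proof is a routine dictionary translation; the only genuinely substantive input is the hypothesis on $R$, used exactly once, to upgrade homological flatness to flatness.
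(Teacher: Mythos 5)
Your overall strategy is the same as the paper's: transport the three conditions through the monoidal equivalence $\Psi\:\mathscr{D}_{HR}\to\mathscr{D}_R$, use dualizability to get that $T$ is finitely presented of finite projective dimension, invoke the hypothesis on $R$ to upgrade to flatness and hence projectivity, and then match $\mu$ and faithfulness with their algebraic counterparts. However, your second paragraph contains a false step at exactly the crucial point. You assert that a perfect complex with homology a single finitely presented module in degree $0$ ``has finite $\Tor$-dimension, so its higher self-$\Tor$ groups vanish.'' Finite $\Tor$-dimension does not imply vanishing of higher self-$\Tor$: take $R=\Z$ and $T=\Z/2$, which is perfect of projective dimension $1$ but has $\Tor^{\Z}_1(\Z/2,\Z/2)\cong\Z/2\neq0$. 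So condition (1) alone cannot force $T$ to be flat, and as written your argument would (incorrectly) make condition (2) irrelevant to establishing projectivity.

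Your final paragraph half-retracts this and correctly identifies that condition (2) must be what ``constrains the self-Tor,'' but you never supply the mechanism, and that mechanism is the heart of the paper's proof. It is a connectivity argument: by \cite[theorem~IV.2.1]{EKMM} one has $\pi_n(HT\wedge_{HR}HT^\op)\cong\Tor^R_n(T,T^\op)$, concentrated in degrees $n\geq0$, while $\pi_n(F_{HR}(HT,HT))\cong\Ext_R^{-n}(T,T)$, concentrated in degrees $n\leq0$. Since $\mu$ is a weak equivalence, both sides must be concentrated in degree $0$; in particular $\Tor^R_k(T,T)=0$ for all $k>0$ and $T\otimes_RT^\op\cong\Hom_R(T,T)$. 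Only now does the hypothesis on $R$ apply (together with finite presentation, which follows from dualizability as in \cite[remark~1.2]{To}) to give flatness, and then f.g.\ projectivity via the corollary to \cite[theorem~7.12]{Matsumura}. With that step inserted, the rest of your translation (the identification of $\mu$ with the classical multiplication map, and faithfulness of $HT$ corresponding to faithfulness of the flat module $T$) goes through as in the paper.
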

\begin{proof}
One direction is easy to see: if $R \lra T$ is an algebraic Azumaya
extension, then $HR \lra HT$ is topologically Azumaya without any
additional assumptions on $R$.

For the converse, from~\cite[theorem~IV.2.1]{EKMM} we have
\begin{align}
\label{eqn:SS-Tor-*}
\pi_n(HT\wedge_{HR}HT^\op) &= \Tor^R_n(T,T^\op), \\
\label{eqn:SS-Ext-*}
\pi_n(F_{HR}(HT,HT)) &= \Ext_R^{-n}(T,T).
\end{align}
Because $\Tor^R_s=0=\Ext_R^s$ when $s<0$, the Azumaya condition
\begin{equation*}
\mu\: HT \wedge_{HR} HT^\op \xrightarrow{\;\simeq\;} F_{HR}(HT,HT)
\end{equation*}
implies that for $n\neq0$,
\begin{equation}\label{eqn:Tor-Ext-0}
\pi_n(HT\wedge_{HR}HT^\op) = \Tor^R_n(T,T^\op) = 0
                  = \Ext_R^n(T,T) = \pi_n(F_{HR}(HT,HT)).
\end{equation}
In particular,
\begin{equation}\label{eqn:HR-Azumaya}
T\otimes_R T^\op = \pi_0(HT\wedge_{HR}HT^\op)
                 \iso \pi_0(F_{HR}(HT,HT)) = \Hom_R(T,T).
\end{equation}
According to \cite[remark~1.2]{To}, the $R$-module $T$ is finitely
presented and flat by assumption, therefore it is finitely generated
and projective by the corollary to~\cite[theorem~7.12]{Matsumura}.

For faithfulness, suppose that $M$ is a non-trivial $R$-module.
Since $HT$ is a faithful $HR$-module, $HT\wedge_{HR}HM \not\simeq*$.
Flatness of $T$ over $R$ together with~\cite[theorem~IV.2.1]{EKMM}
yields the isomorphisms
\begin{equation*}
\pi_*(HT\wedge_{HR}HM) \iso \pi_0(HT\wedge_{HR}HM) \iso T \otimes_R M,
\end{equation*}
and therefore $T \otimes_R M$ is not trivial.
\end{proof}
\begin{prop} \label{prop:real0}
For any commutative ring with unit $R$ there is a natural homomorphism
\begin{equation*}
H\:\Br(R) \lra \Br(HR)
\end{equation*}
induced by the functor which sends a ring to its Eilenberg-Mac~Lane
spectrum.
\end{prop}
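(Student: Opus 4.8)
The plan is to show that the functor $R \mapsto HR$ from commutative rings to commutative $S$-algebras carries algebraic Azumaya algebras to topological Azumaya algebras, sends the elementary (split) algebras to split algebras, respects the smash/tensor product, and is compatible with the base-change maps that define functoriality of $\Br$. Most of the work has already been done: the easy direction of Proposition~\ref{prop:EM} says precisely that if $R \lra T$ is an algebraic Azumaya extension then $HR \lra HT$ is a topological Azumaya algebra, with no hypotheses on $R$. So the first step is just to record that $A \mapsto HA$ gives a well-defined map $\Az(R) \lra \Az(HR)$.

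Next I would check that this map is compatible with the Brauer equivalence relations. If $P$ is a finitely generated projective faithful $R$-module, then $HP$ is a dualizable (in fact the cellular chain functor $\Psi$ of~\eqref{eqn:HR->R} identifies $HP$ with a perfect complex) and faithful $HR$-module, and the monoidal equivalence $\Psi\colon \mathscr{D}_{HR} \simeq \mathscr{D}_R$ gives $\End_{HR}(HP) \simeq H\!\left(\End_R(P)\right)$ as $HR$-algebras, using the isomorphism $C_*(F_{HR}(M,N)) \iso \Hom_R(C_*(M),C_*(N))$ quoted before Proposition~\ref{prop:EM}. Hence if $A_1 \otimes_R \End_R(P_1) \iso A_2 \otimes_R \End_R(P_2)$ as $R$-algebras, applying $H(-)$ and using that $H$ turns $\otimes_R$ into $\wedge_{HR}$ (again by~\cite[theorem~IV.2.1]{EKMM}, or by the monoidality of $\Phi$) yields $HA_1 \wedge_{HR} F_{HR}(HP_1,HP_1) \simeq HA_2 \wedge_{HR} F_{HR}(HP_2,HP_2)$, which is exactly Brauer equivalence in the sense of Definition~\ref{defn:Brauer-eqce}. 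Thus $H$ descends to a well-defined map of sets $\Br(R) \lra \Br(HR)$.

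To see that $H$ is a group homomorphism, note that the multiplication on $\Br(R)$ is induced by $\otimes_R$ and that on $\Br(HR)$ by $\wedge_{HR}$ (Theorem~\ref{thm:BrauerGps}), and $H(A_1 \otimes_R A_2) \simeq HA_1 \wedge_{HR} HA_2$ by the symmetric monoidality of $\Phi = \Psi^{-1}$; the identity element $[R]$ goes to $[HR]$ since $HR$ is the monoidal unit. Finally, naturality: given a ring map $R \lra R'$, functoriality of $\Br$ on the algebraic side is base change $A \mapsto R' \otimes_R A$, and on the topological side it is $A \mapsto HR' \wedge_{HR} A$ (Theorem~\ref{thm:BrauerGps}); compatibility is the canonical equivalence $H(R' \otimes_R A) \simeq HR' \wedge_{HR} HA$, which is once more a consequence of~\cite[theorem~IV.2.1]{EKMM}. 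Assembling these verifications gives the natural homomorphism $H\colon \Br(R) \lra \Br(HR)$.

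The only genuine subtlety — and the step I would expect to require the most care — is the identification of endomorphism objects $F_{HR}(HP,HP) \simeq H\!\left(\End_R(P)\right)$ at the level of $HR$-\emph{algebras}, not merely $HR$-modules, together with the compatibility of the monoidal equivalence $\Psi$ with the formation of endomorphism algebras. One must take a CW (hence cofibrant) $HR$-module model of $HP$ so that the formulas of~\cite[proposition~IV.2.5]{EKMM} apply, and then check that the multiplicative structure (composition on one side, the cup/composition product on $\Hom_R(C_*P, C_*P)$ on the other) is respected. Since $P$ is projective, $C_*(HP)$ is quasi-isomorphic to $P$ concentrated in degree $0$ and $\Hom_R(C_*P,C_*P) \simeq \End_R(P)$, so no higher $\Ext$ obstructions intervene; this is where the hypothesis that the $R$-modules in question are finitely generated projective (guaranteed on Azumaya algebras, cf.\ the proof of Proposition~\ref{prop:EM}) is used. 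Everything else is a routine transport of structure along the symmetric monoidal equivalence $\Phi$.
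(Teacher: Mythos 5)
Your proposal is correct and follows essentially the same route as the paper: the easy direction of Proposition~\ref{prop:EM} shows that $HA$ is a topological Azumaya algebra, and the identification $H\Hom_R(M,M)\simeq F_{HR}(HM,HM)$ for a finitely generated projective faithful $M$ handles the trivial classes. You additionally spell out the multiplicativity, well-definedness and naturality checks (via the monoidal equivalence of \cite[section~IV.2]{EKMM} and flatness of Azumaya algebras) that the paper leaves as routine.
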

\begin{proof}
Let $[A]$ be an element of $\Br(R)$, then Proposition~\ref{prop:EM}
identifies $HA$ as an $HR$-Azumaya algebra. If $[A]=0$, \ie, if
there is a finitely generated faithful projective $R$-module $M$
with $A \iso \Hom_R(M,M)$, then
\begin{equation*}
HA \simeq H\Hom_R(M,M) \simeq F_{HR}(HM,HM)
\end{equation*}
and therefore $HA$ is trivial in $\Br(HR)$.
\end{proof}
\begin{rem}\label{rem:inj}
To\"en shows that the Brauer group of derived Azumaya algebras over
$R$ is parametrized by
$H^2_{\text{\'et}}(R,\mathbb{G}_m) \times H^1_{\text{\'et}}(R,\Z)$
whereas the ordinary Brauer group of $R$, $\Br(R)$, corresponds to
the torsion part in $H^2_{\text{\'et}}(R,\mathbb{G}_m)$. Combining
this with the comparison result of Johnson~\cite[theorem~1.5]{NJ}
yields that the above homomorphism $H$ is injective for any $R$
corresponding to
$H^2_{\text{\'et}}(R,\mathbb{G}_m)_{\mathrm{tor}}
                \subseteq H^2_{\text{\'et}}(R,\mathbb{G}_m)$.
\end{rem}

The situation is drastically different if we consider arbitrary
$HR$-algebra spectra $A$. For instance, for every $R$, every
$R$-module spectrum $\Sigma^nHR$ is faithful and dualizable,
and therefore $F_{HR}(HR \vee \Sigma^nHR, HR \vee \Sigma^nHR)$
is a trivial topological Azumaya $HR$-algebra whose homotopy
groups spread over positive and negative degrees. This indicates
that the Eilenberg-Mac~Lane functor of Proposition~\ref{prop:real0}
will not induce an isomorphism in general. The relationship with
\'etale cohomology groups in~\cite{To} in fact shows that there
are derived Azumaya algebras that are not Brauer equivalent to
an ordinary Azumaya algebra. To\"en describes a concrete example
in~\cite[section~4]{To} originating in an example by Mumford.

We will discuss the case of $H\k$ for a field $\k$. If $A$ is
Azumaya over $H\k$, then as $A$ is dualizable over $H\k$ we
know that the homotopy groups of $A$ are concentrated in finitely
many degrees, say $\pi_r(A)\neq0$ only when $-m \leq r \leq n$
for some $m,n\geq0$. As $\k$ is a field, we have
\begin{equation*}
\pi_*(A \wedge_{H\k} A^\op) \cong \pi_*(A) \otimes_\k\pi_*(A)^\op.
\end{equation*}
Using the fact that $\mu$ induces an isomorphism, we can deduce
that $n = m$ because otherwise the kernel of $\pi_*(\mu)$ would
be nontrivial.

A derived Azumaya algebra over the field $\k$ is a differential
graded $\k$-algebra $B_*$ whose underlying chain complex is a
compact generator of the derived category of chain complexes of
$\k$-vector spaces $\mathscr{D}_\k$ such that the natural map
\begin{equation*}
\mu_{B_*} \: B_* \otimes_\k B_*^{\op} \lra \Hom_\k(B_*,B_*)
\end{equation*}
is an isomorphism in $\mathscr{D}_\k$. Here $B_* \otimes_\k B_*^{\op}$
agrees with the derived tensor product because we are working
over a field, and similarly, $\Hom_\k(B_*,B_*)$ is the graded
$\k$-vector space of derived endomorphisms of $B_*$. Now we can
relate topological $H\k$-Azumaya algebras to derived Azumaya
algebras over $\k$.

\begin{prop} \label{prop:derazk}
If $A$ is a topological Azumaya algebra over $H\k$, then 
$\pi_*(A)$ is a derived Azumaya algebra over $\k$.
\end{prop}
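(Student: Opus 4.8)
The strategy is to transport the three defining conditions of a topological Azumaya algebra over $H\k$ through the symmetric monoidal equivalence $\Phi = \Psi^{-1}\:\mathscr{D}_{H\k}\to\mathscr{D}_\k$ of~\eqref{eqn:HR->R}, using the fact that over a field $\k$ every chain complex is formal, so that $\Phi(A)$ is quasi-isomorphic (as a dg $\k$-algebra) to its homology $\pi_*(A)$ with zero differential. First I would recall that since $A$ is a cofibrant $H\k$-algebra it is a retract of a cell $H\k$-module relative to $H\k$, so $\Phi$ applies and sends $A$ to a dg $\k$-algebra $C_*(A)$ with $H_*(C_*(A)) = \pi_*(A)$. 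Because $\k$ is a field, the chain complex $C_*(A)$ splits as a direct sum of its homology, and this splitting can be realized multiplicatively up to quasi-isomorphism (a dg algebra over a field is intrinsically formal as a complex), giving a weak equivalence of dg $\k$-algebras $C_*(A)\simeq\pi_*(A)$ in $\mathscr{D}_\k$. Hence it suffices to check that $\pi_*(A)$, viewed as a graded $\k$-algebra with zero differential, satisfies the three conditions in the definition of a derived Azumaya algebra over $\k$ recalled just above.

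Next I would verify compactness: dualizability of $A$ as an $H\k$-module means $A$ is a retract of a finite cell $H\k$-module, so $C_*(A)$ is a perfect complex, hence $\pi_*(A)$ is finite-dimensional over $\k$ and concentrated in finitely many degrees — as observed in the paragraph preceding the statement, with $\pi_r(A)\neq 0$ only for $-m\le r\le n$ and in fact $n=m$. A perfect complex over $\k$ that is also faithful is a compact \emph{generator} of $\mathscr{D}_\k$ (any nonzero perfect complex generates, since $\mathscr{D}_\k$ is generated by $\k$ itself and faithfulness rules out the zero object after localization); this gives the generator condition. Then I would run the monoidal identities of~\cite[proposition~IV.2.5]{EKMM}: applying $C_*$ to the Azumaya map $\mu\:A\wedge_{H\k}A^\op\to F_{H\k}(A,A)$ yields precisely the map $\mu_{B_*}\:B_*\otimes_\k B_*^\op\to\Hom_\k(B_*,B_*)$ with $B_* = \pi_*(A)$, because $C_*(A\wedge_{H\k}A^\op)\cong C_*(A)\otimes_\k C_*(A)^\op$ and $C_*(F_{H\k}(A,A))\cong\Hom_\k(C_*(A),C_*(A))$, and over a field these underived tensor and Hom functors compute the derived ones. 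Since $\Phi$ is an equivalence, $\mu$ being a weak equivalence translates to $\mu_{B_*}$ being an isomorphism in $\mathscr{D}_\k$, as required.

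The one point needing slight care — and the step I expect to be the main obstacle — is the passage from "$C_*(A)$ is formal as a complex" to "$C_*(A)\simeq\pi_*(A)$ as dg algebras," i.e.\ checking that the formality can be upgraded to the multiplicative level so that all three conditions can genuinely be checked on the strict graded algebra $\pi_*(A)$ rather than on $C_*(A)$ itself. In fact this detour can be avoided entirely: one never needs a multiplicative splitting, because the definition of a derived Azumaya algebra over $\k$ only asks that $\mu_{B_*}$ be an isomorphism in the \emph{derived} category $\mathscr{D}_\k$, and the derived tensor product and derived Hom of a perfect complex are computed by the underived ones up to quasi-isomorphism; so the map $\pi_*(\mu)$, which is an isomorphism of graded $\k$-vector spaces carrying the required algebra structure by naturality of the identities in~\cite[proposition~IV.2.5]{EKMM}, directly exhibits $\pi_*(A)\otimes_\k\pi_*(A)^\op\xrightarrow{\cong}\Hom_\k(\pi_*(A),\pi_*(A))$. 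I would therefore present the argument by (i) noting $\pi_*(A)$ is a perfect, hence compact, generator of $\mathscr{D}_\k$ using dualizability and faithfulness of $A$; (ii) identifying $\pi_*(A\wedge_{H\k}A^\op)$ and $\pi_*(F_{H\k}(A,A))$ via~\eqref{eqn:SS-Tor-*} and~\eqref{eqn:SS-Ext-*} specialized to a field — where the spectral sequences collapse — with $\pi_*(A)\otimes_\k\pi_*(A)^\op$ and $\Hom_\k(\pi_*(A),\pi_*(A))$ respectively; and (iii) concluding that $\pi_*(\mu)$ is the desired isomorphism $\mu_{\pi_*(A)}$, so $\pi_*(A)$ is a derived Azumaya $\k$-algebra.
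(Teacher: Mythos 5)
Your proposal is correct and, once you strip away the detour through formality of $C_*(A)$ (which you rightly identify as unnecessary), it is essentially the paper's own argument: dualizability gives a finite-dimensional graded $\k$-vector space, hence (with faithfulness guaranteeing nontriviality) a compact generator of $\mathscr{D}_\k$, and applying $\pi_*$ to the weak equivalence $\mu\:A\wedge_{H\k}A^\op\to F_{H\k}(A,A)$ yields the required isomorphism $\pi_*(A)\otimes_\k\pi_*(A)^\op\cong\Hom_\k(\pi_*(A),\pi_*(A))$ since the K\"unneth and universal coefficient spectral sequences collapse over a field. Your step (i)--(iii) summary matches the paper's proof; no further changes are needed.
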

\begin{proof}
As $A$ is dualizable over $H\k$, its homotopy groups build 
a finite dimensional graded $\k$-vector space and hence 
$\pi_*(A)$ is a compact generator of $\mathscr{D}_\k$. The 
weak equivalence
\begin{equation*}
\mu\: A \wedge_{H\k} A^\op \lra F_{H\k}(A,A)
\end{equation*}
yields isomorphisms
\begin{equation*}
\mu_{\pi_*(A)} \: \pi_*(A) \otimes_\k \pi_*(A)^\op
                       \cong \pi_*(A\wedge_{H\k} A^\op)
                       \cong \pi_*F_{H\k}(A,A)
                       \cong \Hom_\k(A_*,A_*)
\end{equation*}
and so $\pi_*(A)$ is a derived Azumaya algebra over $\k$.
\end{proof}
Using Proposition~\ref{prop:derazk} together with To\"en's
results of~\cite[section~1]{To} we obtain the following.
\begin{thm}\label{thm:Br-Hk}
For any algebraically closed field $\k$, the Brauer group
of $H\k$ is trivial.
\end{thm}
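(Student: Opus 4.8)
The plan is to reduce the topological statement to To\"en's algebraic classification via Proposition~\ref{prop:derazk}. First I would recall that by Proposition~\ref{prop:derazk}, any topological Azumaya algebra $A$ over $H\k$ gives rise to a derived Azumaya algebra $\pi_*(A)$ over $\k$, so the Eilenberg-Mac~Lane functor $\Phi$ of~\eqref{eqn:HR->R} carries $\Br(H\k)$ to the derived Brauer group of $\k$ in the sense of To\"en. Conversely, every derived Azumaya algebra over $\k$ is realized (up to quasi-isomorphism) by a differential graded $\k$-algebra, and applying $\Psi^{-1}=\Phi$ produces a topological Azumaya $H\k$-algebra; moreover the Brauer equivalence relations match up under this correspondence, as spelled out in the discussion preceding Proposition~\ref{prop:derazk} and in~\cite[theorem~1.5]{NJ}. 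Hence $\Br(H\k)$ is isomorphic to the derived Brauer group of $\k$.

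Next I would invoke To\"en's computation from~\cite[section~1]{To}: the derived Brauer group of a commutative ring $R$ is $H^2_{\text{\'et}}(R,\mathbb{G}_m)\times H^1_{\text{\'et}}(R,\Z)$, as recalled in Remark~\ref{rem:inj}. When $\k$ is an algebraically closed field, $\Spec\k$ is a point with trivial \'etale topology, so $H^2_{\text{\'et}}(\k,\mathbb{G}_m)=0$ and $H^1_{\text{\'et}}(\k,\Z)=0$ (the latter because $\k$ has no nontrivial finite \'etale covers and $\Z$ is a constant sheaf, or equivalently because the absolute Galois group is trivial). Therefore the derived Brauer group of $\k$ vanishes, and so does $\Br(H\k)$.

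Alternatively, and perhaps more self-containedly, one can argue directly with homotopy groups: if $A$ is Azumaya over $H\k$ then $\pi_*(A)$ is a finite-dimensional graded $\k$-algebra which is a compact generator of $\mathscr{D}_\k$, and the isomorphism $\mu_{\pi_*(A)}\:\pi_*(A)\otimes_\k\pi_*(A)^\op\xrightarrow{\cong}\Hom_\k(\pi_*(A),\pi_*(A))$ forces $\pi_*(A)$ to be, after a shift, an ordinary (ungraded) Azumaya $\k$-algebra concentrated in degree zero together with a shearing by a Picard element; since $\k$ is algebraically closed its ordinary Brauer group is trivial and its Picard group is trivial, so $\pi_*(A)\cong\Hom_\k(V,V)$ for a finite-dimensional graded vector space $V$, which means $A$ is Brauer-trivial. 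I expect the main obstacle to be the careful bookkeeping in this second approach — keeping track of how the shift/Picard degree interacts with the Azumaya condition on the graded level — which is exactly why citing To\"en's clean \'etale-cohomological computation is the cleaner route; the only real content there is checking that the comparison of Brauer equivalence relations between the topological and derived settings is an isomorphism of groups, which is supplied by~\cite[theorem~1.5]{NJ} and the equivalence $\Phi$.
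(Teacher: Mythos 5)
Your proposal is correct and follows the same basic strategy as the paper: reduce to To\"en's classification of derived Azumaya algebras over a field via Proposition~\ref{prop:derazk}. The difference lies in which of To\"en's results you invoke and how much comparison machinery you need. The paper uses only the one-directional statement of Proposition~\ref{prop:derazk} together with \cite[corollary~1.11]{To} --- the concrete assertion that every derived Azumaya algebra over an algebraically closed field is quasi-isomorphic to $\Hom_\k(V,V)$ for some finite dimensional graded vector space $V$ --- and then realizes $V$ topologically as $M=HV=\bigvee_i\Sigma^{m_i}H\k$, so that $A\simeq F_{H\k}(M,M)$ is visibly a trivial Azumaya algebra. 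Your primary route instead leans on the full identification of $\Br(H\k)$ with To\"en's derived Brauer group via \cite[theorem~1.5]{NJ} and the parametrization by $H^2_{\text{\'et}}(\k,\mathbb{G}_m)\times H^1_{\text{\'et}}(\k,\Z)$ recalled in Remark~\ref{rem:inj}, both factors of which vanish over an algebraically closed field. That is a valid but slightly heavier input; note that for the conclusion you only need the derived Brauer group to be a singleton, so whether the parametrization is a group isomorphism or merely a bijection is immaterial. Your second, ``self-contained'' sketch is essentially an attempt to reprove \cite[corollary~1.11]{To} by hand, and you are right that the graded/Picard bookkeeping there is the nontrivial part --- that is exactly the content the citation carries, which is why both you and the paper ultimately outsource it to To\"en.
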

\begin{proof}
Let $A$ be a derived Azumaya algebra over $\k$. We know
from~\cite[corollary~1.11]{To} that every derived Azumaya
algebra over an algebraically closed field $\k$, in particular
$\pi_*(A)$, is quasi-isomorphic to a graded $\k$-vector space
$\Hom_\k(V,V)$ for some finite dimensional graded $\k$-vector
space $V$.

Let
\begin{equation*}
M = HV = \bigvee_{i=1}^n\Sigma^{m_i} H\k
\end{equation*}
be the $H\k$-module spectrum such that $\pi_*M \cong V$ as
graded $\k$-vector spaces. Then $A$ is weakly equivalent to
$F_{H\k}(A,A)$ since there are isomorphisms
\begin{equation*}
\pi_*(A) \cong \Hom_\k(V,V) \cong \pi_*(F_{H\k}(M,M)).
\end{equation*}
Therefore $[A]$ is trivial in the Brauer group $\Br(H\k)$.
\end{proof}

\begin{rem}
Using \cite[corollary~1.15]{To} one can extend the result
to obtain the triviality of the Brauer group $\Br(H\k)$
for any separably closed field $\k$.
\end{rem}

\section{Realizability of algebraic Azumaya extensions}\label{sec:real}

Using Angeltveit's obstruction theory~\cite[theorem~3.5]{VA:THH},
we can import algebraic Azumaya algebra extensions into topology. 
Let $R$ be a commutative $S$-algebra and let $\pi_0R \lra A_0$ be 
an algebraic Azumaya extension. Then
\begin{equation*}
A_* := \pi_*R \otimes_{\pi_0 R} A_0
\end{equation*}
is a projective module over $R_* = \pi_*R $ and there is an $R$-module
spectrum $A'$ with $\pi_*(A') \cong A_*$ which can be built as a mapping
telescope of an idempotent corresponding to viewing $A_*$ as a direct
summand of a free $R_*$-module. The methods of~\cite{AB&BR:Galois} carry
over to give a homotopy associative $R$-ring spectrum $A$ that realizes
$A_*$ as the homotopy ring $\pi_*A$.

Angeltveit's obstruction theory \cite{VA:THH} then yields the 
following.
\begin{thm} \label{thm:real}
There is a unique $A_\infty$ $R$-algebra structure on $A$, \ie, 
there is a unique rigidification $r(A)$ of $A$ to an associative 
$R$-algebra. The resulting extension $R \lra r(A)$ is an Azumaya 
algebra.
\end{thm}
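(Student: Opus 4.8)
The plan is to prove the two assertions in turn: first the existence and essential uniqueness of the $A_\infty$ $R$-algebra structure, and then the Azumaya property of $R \lra r(A)$. For the rigidification, I start from the homotopy associative $R$-ring spectrum $A$ produced above, whose homotopy ring is $A_* = \pi_*R \otimes_{\pi_0 R} A_0$; this is projective, hence flat, over $R_* = \pi_*R$. Under this flatness hypothesis Angeltveit's obstruction theory \cite[theorem~3.5]{VA:THH} applies, and the successive obstructions to refining the homotopy associative multiplication through higher $A_n$-structures, together with the obstructions comparing any two such refinements, are detected in the (internally graded) Hochschild cohomology groups $\HH^{s,t}_{R_*}(A_*, A_*)$ in positive cohomological degree~$s$. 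Since $A_0$ is an algebraic Azumaya $\pi_0 R$-algebra it is separable over $\pi_0 R$, and because $A_0$ is finitely generated projective this separability is inherited by the base change $A_*$ over $R_*$; a separable algebra has vanishing Hochschild cohomology in positive degrees with arbitrary bimodule coefficients, so $\HH^{s}_{R_*}(A_*, A_*) = 0$ for all $s \geq 1$. Hence every obstruction vanishes, the tower of $A_n$-structures can be completed, and the resulting $A_\infty$ structure $r(A)$ is unique up to equivalence.

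For the Azumaya property I check the three conditions of Definition~\ref{defn:Azumaya}. The underlying $R$-module of $r(A)$ is weakly equivalent to $A'$, which is by construction a retract of a finite free $R$-module, and a retract of a dualizable object is dualizable, so $r(A)$ is dualizable over $R$. For the second condition, flatness of $A_*$ over $R_*$ makes the Künneth spectral sequence collapse, so $\pi_*(r(A) \wedge_R r(A)^{\op}) \iso A_* \otimes_{R_*} A_*^{\op}$; dualizability gives $F_R(r(A), r(A)) \simeq F_R(r(A), R) \wedge_R r(A)$, and a second collapse (using that $A_*$ is finitely generated projective) identifies $\pi_* F_R(r(A), r(A)) \iso \Hom_{R_*}(A_*, A_*)$. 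Under these identifications $\pi_*(\mu)$ is the classical comparison map $A_* \otimes_{R_*} A_*^{\op} \lra \Hom_{R_*}(A_*, A_*)$, which is the base change along $\pi_0 R \lra R_*$ of the isomorphism witnessing that $A_0$ is Azumaya (again using that endomorphism rings of finitely generated projectives commute with base change), hence an isomorphism; so $\mu$ is a weak equivalence. Finally, an algebraic Azumaya algebra is faithfully projective and faithful projectivity is stable under base change, so $A_*$ is faithfully flat over $R_*$; if $r(A) \wedge_R X \simeq *$ then the collapsing spectral sequence forces $A_* \otimes_{R_*} \pi_*X = 0$, whence $\pi_* X = 0$ and $X \simeq *$. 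Thus all three conditions hold and $R \lra r(A)$ is Azumaya.

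The main obstacle is the first part: one must correctly match the bidegrees appearing in Angeltveit's obstruction groups with the algebraic Hochschild cohomology of $A_*$ over $R_*$, and then recognise that it is precisely \emph{separability} of $A_0$ — a consequence of, but more robust in flavour than, the bare Azumaya isomorphism — that annihilates all of the obstructions at once. Once the $A_\infty$ structure is in hand, the verification of the Azumaya conditions is a routine homotopy-group computation that reduces, via the collapse of the relevant Tor spectral sequences, to the corresponding statements for the algebraic Azumaya extension $\pi_0 R \lra A_0$.
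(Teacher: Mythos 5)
Your proposal is correct and follows essentially the same route as the paper: Angeltveit's obstruction theory with the obstruction groups killed by separability of $A_*$ over $R_*$ (inherited from the algebraic Azumaya extension $\pi_0R\lra A_0$), followed by a check of the three Azumaya conditions via the collapsing K\"unneth and universal coefficient spectral sequences over the projective $R_*$-module $A_*$. The paper's own proof is just a terser version of this, leaving the spectral-sequence identifications of $\pi_*(\mu)$ and the faithfulness argument implicit.
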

\begin{proof}
The existence of the $A_\infty$ structure on $A$ is given
by~\cite[theorem~3.5]{VA:THH}, because $\pi_*(A\wedge_R A^{\op})$
is separable over $A_*$ and hence the possible obstructions to an
$A_\infty$-structure on $A$ (which live in Hochschild cohomology
groups of $\pi_*(A\wedge_R A^{\op})$ over $A_*$) are trivial. The
possibility of rigidification follows from~\cite[II.4]{EKMM}.
Uniqueness also follows from the vanishing of all higher Hochschild
cohomology groups.

As $A_0$ is finitely generated projective and faithful over $\pi_0R$,
$r(A)$ is dualizable and faithful as an $R$-module spectrum. The
Azumaya condition
\begin{equation*}
\mu \: A_0 \otimes_{\pi_0R} A_0^\op \cong \Hom_{\pi_0R}(A_0,A_0)
\end{equation*}
for $A_0$ guarantees that the $\mu$-map
\begin{equation*}
\mu \: r(A) \wedge_R r(A)^\op \lra F_R(r(A),r(A))
\end{equation*}
is a weak equivalence.
\end{proof}

\begin{cor}\label{cor:real}
There is a natural group homomorphism
\begin{equation*}
r\:\Br(\pi_0R)\lra \Br(R);\quad [A]\mapsto [r(A)].
\end{equation*}
\end{cor}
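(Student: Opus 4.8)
The plan is to verify that the assignment $[A] \mapsto [r(A)]$ on $\Br(\pi_0 R)$ is well-defined and a group homomorphism, using Theorem~\ref{thm:real} to produce the topological Azumaya algebra $r(A)$ from the algebraic one. First I would check well-definedness: if $A_1 \approx A_2$ in $\Az(\pi_0 R)$, then $A_1 \otimes_{\pi_0 R} \Hom_{\pi_0 R}(P_1,P_1) \cong A_2 \otimes_{\pi_0 R} \Hom_{\pi_0 R}(P_2,P_2)$ for finitely generated faithful projective $\pi_0 R$-modules $P_i$. Applying the construction $A \mapsto A_* = \pi_* R \otimes_{\pi_0 R} A_0$ componentwise and then rigidifying, one needs that $r$ is compatible with smash products, i.e. $r(A \otimes_{\pi_0 R} B) \simeq r(A) \wedge_R r(B)$, and that the $R$-module spectrum built from $\Hom_{\pi_0 R}(P,P)$ rigidifies to $F_R(HP \wedge_{\pi_0 R} R, \,\cdot\,)$-type endomorphism algebra, which is a trivial Azumaya algebra by Proposition~\ref{prop:End-Az}. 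The uniqueness clause in Theorem~\ref{thm:real} is what makes these identifications rigid: any two $A_\infty$-structures realizing the same homotopy ring agree, so the weak equivalences of homotopy ring spectra upgrade to weak equivalences of $R$-algebras.

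Next I would verify the homomorphism property. The multiplication on $\Br(\pi_0 R)$ is induced by $\otimes_{\pi_0 R}$ and on $\Br(R)$ by $\wedge_R$, so it suffices to produce a weak equivalence of $R$-algebras
\begin{equation*}
r(A \otimes_{\pi_0 R} B) \simeq r(A) \wedge_R r(B).
\end{equation*}
On homotopy rings this is the standard isomorphism
\begin{equation*}
\pi_* R \otimes_{\pi_0 R} (A_0 \otimes_{\pi_0 R} B_0)
  \cong (\pi_* R \otimes_{\pi_0 R} A_0) \otimes_{\pi_* R} (\pi_* R \otimes_{\pi_0 R} B_0),
\end{equation*}
using flatness (indeed projectivity) of $A_0$ and $B_0$ over $\pi_0 R$ together with the K\"unneth isomorphism $\pi_*(r(A) \wedge_R r(B)) \cong \pi_*(r(A)) \otimes_{\pi_* R} \pi_*(r(B))$, which holds since $\pi_*(r(A))$ is projective over $\pi_* R$. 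Both sides are associative $R$-algebras realizing this common homotopy ring, so by the uniqueness part of Theorem~\ref{thm:real} they are equivalent as $R$-algebras. That $r$ sends the trivial class to the trivial class follows from the analysis above of $\Hom_{\pi_0 R}(P,P)$, together with Proposition~\ref{prop:real0}-style reasoning: the rigidification of $\pi_* R \otimes_{\pi_0 R} \Hom_{\pi_0 R}(P,P)$ is weakly equivalent to $F_R(M,M)$ for the dualizable faithful cofibrant $R$-module $M$ built from $P$, hence trivial in $\Br(R)$.

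Finally, naturality in $R$ is a formal check: a map $R \lra R'$ of commutative $S$-algebras induces $\pi_0 R \lra \pi_0 R'$, and the two composites $\Br(\pi_0 R) \to \Br(\pi_0 R') \to \Br(R')$ and $\Br(\pi_0 R) \to \Br(R) \to \Br(R')$ both send $[A]$ to the rigidification of the homotopy ring $\pi_* R' \otimes_{\pi_0 R'} (\pi_0 R' \otimes_{\pi_0 R} A_0)$, so they agree by uniqueness of the $A_\infty$-structure. I expect the main obstacle to be the compatibility $r(A \otimes_{\pi_0 R} B) \simeq r(A) \wedge_R r(B)$ as $R$-algebras rather than merely as homotopy ring spectra; this is exactly where the uniqueness clause of Theorem~\ref{thm:real} has to be invoked, and one must be slightly careful that the obstruction-theoretic input of~\cite{VA:THH} applies to the tensor product $A_0 \otimes_{\pi_0 R} B_0$, which it does since a tensor product of Azumaya algebras is Azumaya and hence separable, so the relevant Hochschild cohomology groups vanish.
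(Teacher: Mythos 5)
Your proposal is correct and matches the paper's intent: the paper states Corollary~\ref{cor:real} without proof, treating it as an immediate consequence of Theorem~\ref{thm:real}, and the verification you supply (well-definedness, multiplicativity via the K\"unneth isomorphism for projective homotopy, and naturality, all rigidified by the uniqueness clause of the $A_\infty$-structure from Angeltveit's obstruction theory) is exactly the argument the authors leave to the reader. Your identification of $r(A\otimes_{\pi_0R}B)\simeq r(A)\wedge_R r(B)$ as the crux, and of separability of the tensor product as the hypothesis making the obstruction groups vanish, is the right emphasis.
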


This result implies Proposition \ref{prop:real0} but reaches
further. For instance in the presence of enough roots of one,
we can build generalized quaternionic extensions of ring spectra
or consider cyclic extensions. Note, however, that in many cases
$\Br(\pi_0(R))=0$, for instance if $\pi_0(R)$ is isomorphic to
a finite field, $\Z$ (see~\cite{Gr}) or $\Z[A]$ for some finite
abelian group $A$ (see~\cite{KO}). We learned from David Gepner
that the Brauer group for a connective commutative $S$-algebra~$R$
can be described via the second \'etale cohomology group of
$\pi_0(R)$ with coefficients in the units and the first \'etale
cohomology group of $\pi_0(R)$ with coefficients in $\Z$ via a
short exact sequence.

John Rognes drew our attention to the non-trivial examples of
Brauer groups in~\cite{RoWe}.
\begin{prop}\label{prop:JR}
There is a quaternionic extension of the sphere spectrum with
$2$ inverted that is not trivial in the Brauer group and hence
\begin{equation*}
\Br(S[1/2]) \neq 0.
\end{equation*}
\end{prop}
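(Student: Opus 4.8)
The plan is to produce a concrete quaternionic cyclic-algebra-style construction over $S[1/2]$ and then detect its non-triviality by comparison with a known non-trivial algebraic Brauer class. The starting point is the work of Rognes--Weibel~\cite{RoWe} on the $2$-adic and étale $K$-theory of the integers, which exhibits non-trivial classes in $\Br(\Z[1/2])$ (or in the relevant étale cohomology $H^2_{\text{ét}}(\Z[1/2],\mathbb{G}_m)$); the classical quaternion algebra $\(\tfrac{-1,-1}{\Z[1/2]}\)$ is the prototype. The idea is to realize a topological avatar of this class. First I would invoke Corollary~\ref{cor:real}: there is a natural group homomorphism $r\:\Br(\pi_0 R)\lra\Br(R)$ for any commutative $S$-algebra $R$ whose obstruction groups vanish appropriately. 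Applying this with $R = S[1/2]$, so that $\pi_0 R = \Z[1/2]$, we obtain $r\:\Br(\Z[1/2])\lra\Br(S[1/2])$, and the class of the quaternion algebra over $\Z[1/2]$ maps to a quaternionic extension $r(A)$ of $S[1/2]$.

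The second step is to verify that $r$ does not kill this particular class, i.e.\ that $r\:\Br(\Z[1/2])\lra\Br(S[1/2])$ is injective on the subgroup generated by the quaternion class. For this I would argue via the unit map $S[1/2]\lra HQ\wedge S[1/2]$ — or more precisely use that there is a splitting of $\Br$-groups coming from the Postnikov truncation $S[1/2]\lra H\Z[1/2]$, together with the composite $\Br(\Z[1/2])\xrightarrow{H}\Br(H\Z[1/2])$ of Proposition~\ref{prop:real0}, which is injective on the torsion part of $H^2_{\text{ét}}(\Z[1/2],\mathbb{G}_m)$ by Remark~\ref{rem:inj}. Since $r$ followed by the map $\Br(S[1/2])\lra\Br(H\Z[1/2])$ induced by truncation agrees with $H$ on $\pi_0$-level Azumaya algebras (both send a ring to its Eilenberg--Mac~Lane spectrum, compatibly), the composite is injective, hence $r$ itself is injective on the quaternion class. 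Therefore $[r(A)]\neq 0$ in $\Br(S[1/2])$, which gives the displayed inequality $\Br(S[1/2])\neq 0$.

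The main obstacle I anticipate is precisely the compatibility in the previous paragraph: one must check that rigidification $r$ and Eilenberg--Mac~Lane realization $H$ fit into a commuting triangle with the truncation map $\Br(S[1/2])\lra\Br(H\pi_0 S[1/2])$. The subtlety is that $r(A)$ is built by obstruction theory (Theorem~\ref{thm:real}) rather than by a strictly functorial construction, so one has to know that its image under the truncation functor — which is itself a symmetric-monoidal functor on module categories — recovers $HA$ up to weak equivalence as an associative ring spectrum. This should follow because both $r(A)$ and $HA$ realize the same $\pi_0$-algebra $A_0$ and the truncation of an $A_\infty$ structure is the unique algebraic $A_\infty$ structure on $A_0$, which by Theorem~\ref{thm:real}'s uniqueness clause coincides with the one underlying $HA$; but making this precise requires some care with the obstruction-theoretic identifications. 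An alternative, if that compatibility proves delicate, is a direct homotopy-group computation: one computes $\pi_*r(A) \cong \pi_*(S[1/2])\otimes_{\Z[1/2]} A_0$ (a rank-$4$ free module) and shows by a $\pi_*$-level argument — exactly as in the analysis of cyclic algebras in Section~\ref{sec:cyclic}, using that $\pi_*(S[1/2])$ in low degrees detects the quaternion relation — that no faithful dualizable $M$ can have $F_{S[1/2]}(M,M)$ with matching homotopy ring, so $[r(A)]\neq 0$ directly.
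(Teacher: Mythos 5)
Your proposal is correct and follows the same overall strategy as the paper: realize the non-trivial quaternion class of $\Br(\Z[1/2])$ topologically via Corollary~\ref{cor:real}, then detect it after base change to an Eilenberg-Mac~Lane spectrum using the injectivity of Remark~\ref{rem:inj}. The one genuine difference is the detecting map: you base change along the truncation $S[1/2]\lra H\Z[1/2]$ and apply Remark~\ref{rem:inj} directly to $R=\Z[1/2]$, whereas the paper base changes along $S[1/2]\lra H\R$ and uses the extra algebraic fact that $\Br(\Z[1/2])\iso\Z/2\lra\Br(\R)\iso\Z/2$ carries generator to generator. Your route is slightly more economical for the detection; the paper's detour through $\R$ additionally identifies the generator concretely as the class of the localized Hurwitz quaternions $\Z[1/2](i,j,k)\subset\H$, which is what justifies calling the resulting extension ``quaternionic''. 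The compatibility you flag as the main obstacle --- that $r$ followed by Eilenberg-Mac~Lane base change agrees with the map $H$ of Proposition~\ref{prop:real0} --- is precisely the commutativity of the square that the paper also invokes (and likewise leaves mostly implicit); your resolution is sound, since $\pi_*(r(A))$ is free over $\pi_*(S[1/2])$, so $H\Z[1/2]\wedge_{S[1/2]}r(A)$ has homotopy concentrated in degree zero and isomorphic to $A_0$, and an associative $H\Z[1/2]$-algebra with homotopy in degree zero is determined by its $\pi_0$-ring via the monoidal equivalence~\eqref{eqn:HR->R}. The only part I would discard is your fallback argument by direct computation of $\pi_*r(A)$: Brauer triviality only requires an equivalence $r(A)\wedge F_R(M_1,M_1)\simeq F_R(M_2,M_2)$ after stabilizing by endomorphism algebras, so comparing homotopy rings of $r(A)$ with those of $F_{S[1/2]}(M,M)$ alone does not obviously rule out triviality of the class.
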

\begin{proof}
The Brauer group of $\mathbb{Z}[1/2]$ is isomorphic to $\Z/2$,
see~\cite[2.8]{RoWe}. Extension of scalars to $\R$ has to send
the generator of $\Br(\Z[1/2])$ to the generator of
$\Br(\R)\cong \Z/2$ and hence this extension is equivalent to
the $\R$-algebra of quaternions $\H$. Thus we know that a
representative of the generator of $\Br(\Z[1/2])$ is given by
the class of the subring of localized Hurwitz quaternions
$\Z[1/2](i,j,k)\subseteq\H$ with $i^2=j^2=k^2=-1$, $k=ij=-ji$,
which is the quaternionic extension of $\Z[1/2]$. We can realize
this extension topologically as an Azumaya algebra $H$ over
$S[1/2]$.

Using base-change to $H\R$ we get the following commutative
diagram.
\begin{equation*}
\xymatrix{
{\Br(S[1/2])} \ar[r] & {\Br(H\R)} \\
{\Br(\Z[1/2])} \ar[r] \ar[u]& {\Br(\R)}
\ar[u]
}
\end{equation*}
As the map from $\Br(\mathbb{R})$ to $\Br(H\R)$ is injective
(compare Remark~\ref{rem:inj}), the image of the class of
$\Z[1/2](i,j,k)$ in $\Br(S[1/2])$ cannot be trivial.
\end{proof}
\begin{rem} \label{rem:localspheres}
As the Brauer groups of $\Z[1/p]$ and $\Z_{(p)}$ are non-trivial
for odd primes as well (see \cite{OS} and \cite[p.145]{Weibel}),
the above result can be used to obtain that other Brauer groups
of connective commutative ring spectra are non-trivial. In particular,
the Brauer groups of the corresponding localized spheres are non-trivial.
\end{rem}

\section{Topological Hochschild cohomology of group rings}
\label{sec:grouprings}

We will consider Azumaya algebra extensions that arise as follows.
For a finite discrete group $G$ and a commutative $S$-algebra $A$,
we consider the group $A$-algebra spectrum  $A[G] = A \wedge G_+$.
Note, that if $G$ is not abelian, then $A[G]$ is not commutative.
We want to identify the extension $\THH_A(A[G]) \lra A[G]$ as an
Azumaya extension in good cases.

For ordinary commutative rings $R$ and  groups $G$, DeMeyer and
Janusz describe in~\cite{DMJ} conditions on $R$ and $G$ which
ensure that $R[G]$ is an Azumaya algebra over its centre. First,
we document a well-known identification of topological Hochschild
cohomology of group rings, see for instance~\cite[4.2.10]{Malm}.
This can we viewed as a topological version of Mac~Lane's
isomorphisms~\cite[7.4.2]{Loday}.

\begin{lem} \label{lem:thhag}
For $A$ and $G$ as above we have
\begin{equation*}
\THH_A(A[G],A[G]) \simeq (A[G]^c)^{hG} = F_G(EG_+,A[G]^c).
\end{equation*}
Here $A[G]^c$ denotes the naive $G$-spectrum $A[G]$, where~$G$
acts by conjugation on $G$.
\end{lem}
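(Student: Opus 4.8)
The plan is to compute $\THH_A(A[G],A[G])$ directly from the definition as the function spectrum $F_{A[G]\wedge_A A[G]^\op}(\widetilde{A[G]},\widetilde{A[G]})$, and to identify this with the $G$-homotopy fixed points of $A[G]$ with the conjugation action. The starting point is the classical observation that the enveloping algebra of a group ring is again a group ring: since $A[G]^\op \simeq A[G]$ via $g \mapsto g^{-1}$, one has $A[G]\wedge_A A[G]^\op \simeq A[G\times G]$ as $A$-algebras. First I would set up the standard change-of-rings. Under the isomorphism $G\times G \iso G\times G$, $(g,h)\mapsto (g, gh)$, the group $G\times G$ is carried to the semidirect product in which the second copy of $G$ is normal with $G$ acting by conjugation; concretely $A[G\times G]$ acquires the form $A[G]\langle G_c\rangle$, the twisted group algebra of the conjugation action (here I am reusing the $B\langle H_c\rangle$ notation from Section~\ref{sec:Gal->Azumaya}). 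Under this identification the $A[G\times G]$-module $A[G]$ encoding the bimodule structure corresponds to $A[G]^c$ viewed as a module over $A[G]\langle G_c\rangle$ via the augmentation $G_c \to *$ on the twisting factor together with left multiplication.

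Next I would use the shear-map identification to replace the function spectrum over the twisted group algebra by a homotopy-fixed-point construction. For a twisted group algebra $B\langle G_c\rangle$ and a $B\langle G_c\rangle$-module of the form $B\wedge_A N$ with the twisting acting trivially on $N$, the function spectrum $F_{B\langle G_c\rangle}(B\wedge_A(-), B\wedge_A N)$ computes, after a cofibrant replacement, the $G$-homotopy fixed points $F_G(EG_+, -)$; this is precisely the kind of statement underlying Lemma~\ref{lem:Azumaya-module} and the argument of Proposition~\ref{prop:Saltman-6.11}. Applying this with $B = A[G]$ (so that $A[G]\wedge_A A[G]^\op$ plays the role of $B\langle G_c\rangle$) and with the relevant module being $A[G]^c$, I get
\begin{equation*}
\THH_A(A[G],A[G]) = F_{A[G]\wedge_A A[G]^\op}(\widetilde{A[G]},\widetilde{A[G]}) \simeq F_G(EG_+, A[G]^c) = (A[G]^c)^{hG},
\end{equation*}
which is the assertion. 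Alternatively, and perhaps more transparently, one can argue via the simplicial (cyclic bar) model: $\THH_A(A[G])$ is the totalization of the cosimplicial spectrum $[n]\mapsto F_A(A[G]^{\wedge_A n}, A[G]) \simeq \Map(G^{\times n}, A[G])$, and because $G$ is discrete this cosimplicial object is isomorphic to the cobar construction computing $F_G(EG_+, A[G]^c)$ once one tracks the conjugation action through the identifications $F_A(A[G]^{\wedge_A n}, A[G]) \cong \prod_{G^{\times n}} A[G]$.

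The main obstacle will be the bookkeeping of the $G$-action: one must check that under the chain of isomorphisms $A[G]\wedge_A A[G]^\op \iso A[G\times G] \iso A[G]\langle G_c\rangle$ the residual $G$-action that survives on the function spectrum is genuinely the conjugation action on $G$ (hence the notation $A[G]^c$), and not, say, translation or the trivial action. I would handle this by writing out the shear map $\mathrm{sh}$ explicitly on $A$-linear summands — as is done for the map $\mathrm{sh}$ in~\cite[section~3.5]{JR:Opusmagnus} — and tracking where a generator $(g,h)\in G\times G$ goes, confirming that the normal subgroup acts by $h\mapsto g h g^{-1}$. The remaining points — that $A[G]$ is cofibrant enough (it is a wedge of copies of $A$, hence dualizable over $A$, so $F_A$ commutes with the relevant constructions) and that $EG_+$ provides the correct cofibrant replacement in $\mathscr{M}_{A[G]\wedge_A A[G]^\op}$ — are routine and follow the pattern already used in Lemma~\ref{lem:Azumaya-module}.
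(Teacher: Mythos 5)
Your proposal is correct in substance, but your primary argument takes a genuinely different route from the paper's. The paper proves the lemma entirely within the cosimplicial model: it writes $\THH_A(A[G])$ as $\Tot$ of $[q]\mapsto F_A(A[G]^{\wedge_A q},A[G])\cong F(G^q_+,A[G])$, applies the Mac~Lane-style rebracketing $f\mapsto f'$, $f'(g_1,\dots,g_q)=f(g_1,\dots,g_q)g_q^{-1}\cdots g_1^{-1}$, to convert this into the cosimplicial spectrum $F(G^\bullet,A[G]^c)$ with explicitly recorded coface maps, and then compares with $F_G(EG_+,A[G]^c)=\Tot\bigl([q]\mapsto F_G(G^{q+1}_+,A[G]^c)\bigr)$ for the simplicial model $EG_\bullet=G^{\bullet+1}$ via the explicit map $(\phi f)(g_0,\dots,g_q)=g_0f(g_0^{-1}g_1,\dots,g_{q-1}^{-1}g_q)g_0^{-1}$. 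This is exactly the "alternative" you sketch in your last display, so that part of your proposal coincides with the paper. Your main argument instead runs through the identification $A[G]\wedge_A A[G]^\op\simeq A[G\times G]$, the shear decomposition of $G\times G$ as the coefficient copy of $G$ extended by the diagonal acting by conjugation, and a Shapiro-type derived induction--restriction adjunction: the bimodule $A[G]$ is induced from the trivial module $A$ over the diagonal subalgebra, whence $F_{A[G\times G]}(\widetilde{A[G]},A[G])\simeq F_{A[\Delta G]}(\tilde A,A[G]^c)=(A[G]^c)^{hG}$. This is a clean, conceptually transparent argument and it buys a statement that does not depend on choosing a particular cosimplicial resolution; the paper's approach buys explicit formulas (useful for the action bookkeeping you rightly worry about) at the cost of more combinatorics. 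Two small corrections to your write-up: Lemma~\ref{lem:Azumaya-module} does not actually contain the Shapiro statement you need (it identifies $N\simeq(B\wedge_A N)^{hG}$, not function spectra over twisted group algebras), so the adjunction $F_{B\langle G_c\rangle}(B\wedge_A\tilde A,M)\simeq F_{A[G]}(\tilde A,\mathrm{Res}\,M)$ together with the agreement of strict and homotopy fixed points for the resulting cofree target must be stated and justified separately; and the phrase "via the augmentation $G_c\to\ast$" is a slip --- the twisting group acts by conjugation on $A[G]$ and trivially only on the $N=A$ factor of $B\wedge_A A$.
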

\begin{proof}
Topological Hochschild cohomology of $A[G]$ can be described
as the totalization of the cosimplicial spectrum that has
\begin{equation*}
F_A(A[G]^q,A[G]) \cong F(G^q_+,A[G])
\end{equation*}
as $q$-cosimplices~\cite{MS}. First, we mimic the identification
that is used in the Mac~Lane isomorphism for usual Hochschild
cohomology in order to identify this cosimplicial spectrum with
the one that has $F(G^q,A[G]^c)$ as $q$-cosimplices. In algebra
this identification is given by $f \mapsto f'$ where
\begin{equation*}
f'(g_1,\ldots,g_q) = f(g_1,\ldots,g_q)g_q^{-1}\ldots g_1^{-1}.
\end{equation*}
An analogous identification works on spectrum level. The coface
maps in the cosimplicial structure in $F(G^\bullet,A[G]^c)$ are
given by
\begin{align*}
d_0(f)(g_1,\ldots,g_q) & =  g_1f(g_2,\ldots,g_q)g_1^{-1}, \\
d_i(f)(g_1,\ldots,g_q) & =  f(g_1,\ldots, g_ig_{i+1},\ldots,g_q),
\quad (0 < i < q) \\
d_q(f)(g_1,\ldots,g_q  & =  f(g_1,\ldots,g_{q-1}).
\end{align*}

Consider the simplicial model of $EG$ with $q$-simplices $G^{q+1}$,
with diagonal $G$-action, and where the $i$-th face map in $EG$
is given by  omitting the $i$-th group element. We can write the
homotopy fixed point spectrum $F_G(EG_+,A[G]^c)$ as
\begin{equation*}
F_G(EG_+,A[G]^c) \cong \Tot([q] \mapsto F_G(G^{q+1},A[G]^c)).
\end{equation*}

Let $\phi\: F(G^\bullet,A[G]^c) \lra F_G(EG_+,A[G]^c)$ be the
map that we can describe symbolically as
\begin{equation*}
(\phi f)(g_0,\ldots,g_q) = g_0f(g_0^{-1}g_1,\ldots,g_{q-1}^{-1}gq)g_0^{-1}.
\end{equation*}
It is then straightforward to check that $\phi$ in fact respects
the cosimplicial structure.
\end{proof}

Now fix a prime $p$. Let $\k$ be an algebraically closed field
of characteristic~$p$ and let $H\k$ be the corresponding
Eilenberg-Mac~Lane spectrum realized as a commutative $S$-algebra.
We also adopt the notation of~\cite{AB&BR:E_n^nr}. Thus $E_n$
is the Lubin-Tate spectrum associated with the prime~$p$ and the
Honda formal group of height~$n$ and $\Enr_n$ is its maximal
unramified Galois extension. These commutative $S$-algebras have
`residue fields' in the sense of~\cite{AB&BR:Galois,AB&BR:Inv},
namely $K_n$ and $\Knr_n$ respectively, and these are algebras
over $E_n$ and $\Enr_n$ respectively, but only homotopy commutative
when $p\neq 2$ and not even that when $p=2$.

\begin{thm} \label{thm:grouprings}
Let $G$ be a non-trivial finite discrete group whose order is not
divisible by~$p$. Suppose that $A$ is either $H\k$ or $\Enr_n$.
\begin{enumerate}
\item
If\/ $G$ is abelian, then $(A[G]^c)^{hG} \lra A[G]$ and the trivial
extension $\id \: A[G] \lra A[G]$ are equivalent.
\item
If $G$ is non-abelian, then $A[G]$ is a non-trivial
$(A[G])^{hG}$-Azumaya algebra.
\end{enumerate}
\end{thm}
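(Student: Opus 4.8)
The plan is to treat the two cases separately, in each case reducing the topological statement to an algebraic computation of $\THH$ of a group ring. The key computational input is Lemma~\ref{lem:thhag}, which identifies $\THH_A(A[G],A[G])$ with the homotopy fixed points $(A[G]^c)^{hG}$ for the conjugation action. Since $|G|$ is prime to $p$ and $A$ is an $H\k$- or $\Enr_n$-algebra (so $\pi_*A$ has no $p$-torsion and is $|G|$-divisible), the relevant homotopy fixed point spectral sequences collapse: group cohomology $\mathrm{H}^s(G;-)$ with coefficients in a $\Z[1/|G|]$-module vanishes for $s>0$, so $(A[G]^c)^{hG}$ has homotopy $\pi_*\bigl((A[G]^c)^{hG}\bigr)\cong \mathrm{H}^0(G;\pi_*A[G]^c) = (\pi_*A)[\mathrm{Conj}(G)]$, the free $\pi_*A$-module on the set of conjugacy classes of $G$. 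In particular $(A[G])^{hG}$ realizes the centre of the group ring $(\pi_*A)[G]$.

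For part~(1), suppose $G$ is abelian. Then conjugation is trivial, $A[G]^c = A[G]$ with trivial $G$-action, and $(A[G])^{hG} \simeq F(BG_+, A[G])$. First I would show that the canonical map $A[G] \to (A[G])^{hG}$ (adjoint to the projection $EG_+ \to S^0$, i.e. $\eta$ from the $\THH$ picture) is a weak equivalence. By the collapse of the homotopy fixed point spectral sequence it suffices to check it is an isomorphism on $\pi_*$, and both sides have homotopy $(\pi_*A)[G]$; the map is the identity on $\mathrm{H}^0$ because $G$ acts trivially, so it is an equivalence. Since $\THH_A(A[G]) \to A[G]$ being an equivalence is exactly the statement that $A[G]$ is homotopically central, and combined with the fact that $A[G]$ is trivially dualizable and faithful over itself one concludes the extension $\id$ is the relevant one; more precisely, by Lemma~\ref{lem:thhag} the natural map $(A[G]^c)^{hG} \to A[G]$ \emph{is} this equivalence, so the two extensions literally agree.

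For part~(2), suppose $G$ is non-abelian. I would invoke Proposition~\ref{prop:EM} (or rather its method) combined with the classical result of DeMeyer and Janusz~\cite{DMJ}: since $|G|$ is invertible in $\pi_0 A = \k$ (or in $\pi_0\Enr_n$), the group ring $(\pi_0 A)[G]$ is a separable algebra over its centre, and in fact $(\pi_* A)[G]$ is an Azumaya algebra over its centre $Z_* = (\pi_*A)[\mathrm{Conj}(G)]$. The homotopy fixed point computation above shows $\pi_*\bigl((A[G])^{hG}\bigr) \cong Z_*$, so the extension $(A[G])^{hG} \to A[G]$ realizes $Z_* \to (\pi_*A)[G]$ on homotopy. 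Then the argument of Proposition~\ref{prop:derazk}/Proposition~\ref{prop:EM} applies: $A[G]$ is dualizable over $(A[G])^{hG}$ (its homotopy is finite free over $Z_*$), it is faithful (a retract of a finite wedge realizing a faithful projective $Z_*$-module — here one uses that $\pi_*A$ is flat so that $\pi_*(-\wedge_{(A[G])^{hG}} -)$ computes the algebraic tensor product), and the $\mu$-map $A[G]\wedge_{(A[G])^{hG}} A[G]^\op \to F_{(A[G])^{hG}}(A[G],A[G])$ is an equivalence because on homotopy it is the algebraic Azumaya isomorphism $(\pi_*A)[G]\otimes_{Z_*}(\pi_*A)[G]^\op \xrightarrow{\cong} \mathrm{Hom}_{Z_*}((\pi_*A)[G],(\pi_*A)[G])$. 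Finally, non-triviality: if $[A[G]]$ were trivial in $\Br((A[G])^{hG})$, then by Remark~\ref{rem:NJ-EW} $A[G]$ would be weakly equivalent to $F_{Z}(M,M)$ for a dualizable faithful $Z$-module $M$ (writing $Z = (A[G])^{hG}$), which on homotopy would force $(\pi_*A)[G]$ to be a matrix algebra over $Z_*$; but the classical theory says $(\pi_*A)[G]$ is a matrix algebra over its centre iff a suitable cohomological obstruction vanishes, and for $G$ non-abelian one can exhibit a concrete group (or cite the relevant part of~\cite{DMJ}) for which this fails over $\k$ — alternatively, one compares ranks/idempotents via Wedderburn decomposition of $\k[G]$, where the presence of a matrix block of size $>1$ shows $\k[G]$ is not Brauer-trivial over its centre.

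The main obstacle I anticipate is the non-triviality assertion in part~(2): unlike the algebraic identification of $\pi_*$, which is a formal spectral sequence collapse, ruling out a splitting $A[G]\simeq F_Z(M,M)$ requires genuinely understanding the Brauer class of the algebraic group ring over its centre, and the statement as phrased ("$A[G]$ is a \emph{non-trivial} $(A[G])^{hG}$-Azumaya algebra") presumably needs either a hypothesis on $G$ beyond non-abelianness or a careful appeal to the DeMeyer--Janusz analysis together with the fact that $\k$ is algebraically closed (so that the only obstruction to triviality is the appearance of a non-commutative Wedderburn block, which always happens for non-abelian $G$ with $|G|$ invertible). Pinning down exactly which classical statement gives non-triviality over the centre — and checking it survives to the homotopy category via Remark~\ref{rem:NJ-EW} — is where the real work lies.
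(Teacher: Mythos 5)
Your part (1) and the spectral-sequence collapse identifying $\pi_*\bigl((A[G]^c)^{hG}\bigr)$ with the center $Z(A_*[G])$ match the paper. Part (2), however, has two genuine problems. The first is that you have misread what ``non-trivial'' means, and the route you sketch for it rests on a false claim. In the theorem, ``non-trivial'' is in contrast with part (1): it means the Azumaya extension $(A[G]^c)^{hG}\lra A[G]$ is not equivalent to the identity extension, because for non-abelian $G$ the center $Z(A_*[G])$ is a \emph{proper} subring of $A_*[G]$. It does \emph{not} mean that the class of $A[G]$ is non-zero in the Brauer group of $(A[G]^c)^{hG}$ --- indeed that would be false: for $A=H\k$ with $\k$ algebraically closed, Artin--Wedderburn gives $\k[G]\iso\prod_i M_{m_i}(\k)$, which is the endomorphism algebra of a faithful finitely generated projective module over the center $\prod_i\k$, hence Brauer-trivial. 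Your assertion that ``the presence of a matrix block of size $>1$ shows $\k[G]$ is not Brauer-trivial over its centre'' is exactly backwards: a matrix algebra over a field is the prototypical trivial Azumaya algebra, and $\Br(\prod_i\k)=0$ here. So the ``main obstacle'' you anticipate dissolves once the statement is read correctly, and the argument you propose for it would not go through.

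The second problem is that you pass over where the real work in the paper actually lies. To speak of $A[G]$ as an Azumaya algebra over $(A[G]^c)^{hG}$ in this framework you need $(A[G]^c)^{hG}$ to be weakly equivalent to a \emph{commutative} $S$-algebra --- a priori $\THH$ carries only an $E_2$-structure --- and you need $A[G]$ to be realized as an associative algebra over that commutative model. The paper handles this with obstruction theory: for $H\k$ the center $\prod_i\k$ is \'etale over $\k$, so Robinson/Goerss--Hopkins gives a unique $E_\infty$ model, and Angeltveit's obstruction theory together with Theorem~\ref{thm:real} then rigidifies $A[G]$ to an associative Azumaya algebra over it; for $\Enr_n$ one reduces modulo the maximal ideal, lifts the central idempotents, identifies $(\Enr_n[G]^c)^{hG}$ with a finite product of copies of $\Enr_n$, and realizes the idempotent splittings as maps of $S$-algebras via Bousfield localization. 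Your plan of ``checking $\mu$ on homotopy'' presumes these multiplicative structures are already in place, which is precisely what has to be proved.
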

\begin{proof}
In all cases, we will consider the homotopy fixed point spectral
sequence
\begin{equation*}
\mathrm{E}^{s,t}_2 = H^{-s}(G;A_t[G]^c)
                         \Lra \pi_{s+t}((A[G]^c)^{hG}).
\end{equation*}
If $p$ does not divide the order of the group $G$, then this
spectral sequence collapses and the only surviving non-trivial
terms are the $G$-invariants
\begin{equation*}
\mathrm{E}_2^{0,t} = (A_t[G]^c)^G
\end{equation*}
which can be identified with the center of the group ring $Z(A_*[G])$.
In particular, $\pi_*((A[G]^c)^{hG})$ is a graded commutative
$A_*$-algebra.

If $G$ is abelian, then the conjugation action is trivial and
as~$p$ does not divide~$|G|$ we obtain
\begin{equation*}
(A[G]^c)^{hG} = F(BG_+,A[G]) \simeq A[G],
\end{equation*}
so we have the trivial Azumaya extension. If $G$ is not abelian,
then the center of the group ring $A_*[G]$ is a proper subring
of~$A_*[G]$.

For $A = H\k$ we can use Artin-Wedderburn theory to obtain a
splitting of the semisimple ring $\k[G]$ into a product of
matrix algebras over the algebraically closed field $\k$,
\begin{equation*}
\k[G] \cong \prod_{i=1}^r M_{m_i}(\k),
\end{equation*}
where $r$ agrees with the number of conjugacy classes in $G$.
Thus the center of $\k[G]$ is a product of copies of $\k$ and
is therefore an \'etale $\k$-algebra. By the obstruction
theory of Robinson or Goerss-Hopkins~\cite{AR:Gamma,GH}, there
is a unique $E_\infty$ $H\k$-algebra spectrum that is weakly
equivalent to $(A[G]^c)^{hG}$. By abuse of notation we denote
the corresponding commutative $H\k$-algebra by $(A[G]^c)^{hG}$.

We have to describe $A[G]$ as an associative $(A[G]^c)^{hG}$-algebra.
For this we use~\cite[theorem~3.5]{VA:THH} again. Starting with our
commutative model of $(A[G]^c)^{hG}$ we can build a homotopy associative
ring spectrum $B$ with $\pi_*(B)\cong A_*[G]$, and as~$G$ is finite
and discrete this extension is of the form
\begin{equation*}
\pi_*(B) \cong
          \pi_*(A[G]^c)^{hG} \otimes_{\pi_0(A[G]^c)^{hG}} B_0,
\end{equation*}
with $\pi_0(A[G]^c)^{hG} \lra B_0$ being algebraically Azumaya.
Thus we can apply Theorem~\ref{thm:real} to see that there is
an associative $(A[G]^c)^{hG}$-algebra $B$ which models $A[G]$
and such that $B$ is Azumaya over $(A[G]^c)^{hG}$.

For $\Enr_n$ we pass to the residue field $\Knr_n$. The homotopy
fixed point spectral sequence gives
\begin{align*}
\pi_*((\Enr_n[G]^c)^{hG}) &\cong Z((\Enr_n)_*[G]) \\
        &\cong Z(W\bar{\F}_p[[u_1,\ldots,u_{n-1}]][G])[u^{\pm1}].
\end{align*}
Reducing modulo the maximal ideal $\mathfrak{m} = (p,u_1,\ldots,u_{n-1})$
gives the homotopy groups of the $G$-homotopy fixed points of $\Knr_n[G]$
with respect to the conjugation action, $Z(\bar{\F}_p[G])[u^{\pm 1}]$ and
again we can identify this term as $\prod_{i=1}^r \bar{\F}_p$ where~$r$
denotes the number of conjugacy classes in $G$. The idempotents that give
rise to these splittings can be lifted to idempotents for
$Z(W\bar{\F}_p[[u_1,\ldots,u_{n-1}]][G])$ and
$W\bar{\F}_p[[u_1,\ldots,u_{n-1}]][G]$ and therefore these two algebras
also split into products with $r$ factors:
\begin{align*}
W\bar{\F}_p[[u_1,\ldots,u_{n-1}]][G] &\cong \prod_{i=1}^r B_i, \\
Z(W\bar{\F}_p[[u_1,\ldots,u_{n-1}]][G]) &\cong \prod_{i=1}^r C_i,
\end{align*}
where
\begin{equation*}
B_i/\mathfrak{m} \cong M_{m_i}(\bar{\F}_p),
\end{equation*}
while for $1 \leq i \leq r$, the $C_i$ are commutative and satisfy
\begin{equation*}
C_i/\mathfrak{m}C_i \cong \bar{\F}_p.
\end{equation*}

Additively we know that $Z(W\bar{\F}_p[[u_1,\ldots,u_{n-1}]][G])$ is
the free module on the conjugacy classes and so we can conclude that
$(\Enr_n[G]^c)^{hG}$ is weakly equivalent to $\prod_{i=1}^r\Enr_n$ and
the latter spectrum can be modelled by a commutative $\Enr_n$-algebra
spectrum and $\Enr_n[G]$ is dualizable over $\prod_{i=1}^r\Enr_n$.

Artin-Wedderburn theory gives a semisimple decomposition
\begin{equation*}
\bar{\F}_p[G] \iso \prod_{i=1}^r M_{d_i}(\bar{\F}_p),
\end{equation*}
and the centre $Z(\bar{\F}_p[G])$ can be identified with the product
of the centres of the matrix ring factors. There are associated
central idempotents of $\bar{\F}_p[G]$ accomplishing this splitting.
By the theory of idempotent lifting described in~\cite[section~21]{Lam:Noncomm}
for example, these idempotents lift to give an associated splitting
\begin{equation*}
W\bar{\F}_p[[u_1,\ldots,u_{n-1}]][G] \iso
  \prod_{i=1}^r M_{d_i}(W\bar{\F}_p[[u_1,\ldots,u_{n-1}]]),
\end{equation*}
and again the centre of $W\bar{\F}_p[[u_1,\ldots,u_{n-1}]][G]$ can
be identified with the product of the centres of the matrix factors.
Notice that $M_{d_i}(W\bar{\F}_p[[u_1,\ldots,u_{n-1}]])$
is Azumaya over $W\bar{\F}_p[[u_1,\ldots,u_{n-1}]]$. The rest of
the proof involves realising the central idempotents as morphisms
of $S$-algebras, but this is well known to be possible since the
projections are Bousfield localisations, see~\cite{SVW}.
\end{proof}

\section{Azumaya algebras over Lubin-Tate spectra}
\label{sec:Azumaya-LT}

From now on we will use $E$ to denote $E_n$, $\Enr_n$ or any commutative
Galois extension of $E_n$ obtained as a homotopy fixed point algebra
$E=(\Enr_n)^{h\Gamma}$ for some closed normal subgroup
$\Gamma\lhd\Gal(\Fc_p/\F_{p^n})$. Similarly, $K$ will denote the
corresponding residue field of $E$, so when $E=E_n$ or $\Enr_n$
we have $K=K_n$ or $\Knr_n$.

We will work with dualizable $K$-local $E$-modules.
By~\cite[section~7]{AB&BR:E_n^nr} we know that such modules are
retracts of finite cell $E$-modules. If $W\in\mathscr{M}_{E,K}$,
then since $\pi_*(K\wedge_E W)$ is a graded vector space over
the graded field $K_*=\pi_*(K)$, it follows that
\[
K\wedge_E W \simeq L_K\bigvee_i \Sigma^{d(i)}K,
\]
where the right hand wedge is non-trivial if and only if $W$ is
non-trivial in $\mathscr{D}_{E,K}$. In particular, if $W$ is
dualizable this wedge is finite and
\[
K\wedge_E W \simeq \bigvee_i \Sigma^{d(i)}K
\]
since $W$ is $K$-local. For any $X\in\mathscr{M}_{E,K}$,
\[
K\wedge_E(W\wedge_E X) \simeq L_K\bigvee_i \Sigma^{d(i)}K\wedge_E X,
\]
so $W\wedge_E X$ is trivial in $\mathscr{D}_{E,K}$ if and only if
both of $W$ and $X$ are trivial in $\mathscr{D}_{E,K}$. Thus every
$E$-module $W$ which is non-trivial as an element of $\mathscr{D}_{E,K}$
is faithful and cofibrant as a $K$-local $E$-module; furthermore,
every $X\in\mathscr{M}_{E,K}$ is $W$-local.

By~\cite{VA:THH}, there are many examples of $K$-local Azumaya
algebras over $E$ which have $K$ as their underlying ring
spectrum. These examples have no analogue in the algebraic context
since they are not projective $E$-modules, nor do they split over
suitable Galois extensions. Instead we focus on split examples.
A good source of these can be found in the situation
of~\cite[section~5.4.3]{JR:Opusmagnus}, based on work of Devinatz
and Hopkins~\cite{ED&MH:HtpyFixPts} and we will discuss these in
Section~\ref{sec:K-localAzumaya}.

For background ideas on Azumaya algebras graded on a finite abelian
group, we follow~\cite{CGO} which generalises work of Wall~\cite{Wall}
and others. We will only consider the case where the grading group
is $\Z/2$ with the non-trivial symmetric bilinear map
$\Z/2 \times \Z/2 \lra\{\pm1\}$ determining the relevant signs, however
in periodic topological contexts it may also prove useful to modify
the grading to other finite quotient groups lying between $\Z$ and
$\Z/2$, and the above reference should provide appropriate generality
for such algebra.

Over a field $\k$, an (ungraded) Azumaya algebra $A$ is a central
simple algebra, so by Wedderburn's theorem, there is an isomorphism
of $\k$-algebras
\[
A \iso M_r(D),
\]
where $D$ is a central division algebra over $\k$. If $d=\dim_\k D$,
then
\[
\dim_\k A = (rd)^2,
\]
so $\dim_\k A $ is a square. In the graded case, such restrictions
do not always apply, and this has consequences for the topological
situation.
\begin{thm}\label{thm:KsmashA}
Suppose that $p$ is an odd prime and let $A$ be a $K$-local Azumaya
algebra over $E$. Then $\pi_*(K\wedge_E A)$ is an Azumaya algebra
over $K_*$.
\end{thm}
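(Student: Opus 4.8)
The plan is to base change the $K$-local Azumaya conditions on $A$ along $E\to K$ into the derived category of $K$-modules and then read off homotopy groups. Write $A_*=\pi_*(K\wedge_E A)$; since $p$ is odd, $K$ is homotopy commutative, so $K\wedge_E A$ is a homotopy associative $K$-algebra with $K_*$ central, and $A_*$ is a graded $K_*$-algebra. Because $A$ is a dualizable $K$-local $E$-module it is a retract of a finite cell $E$-module, hence dualizable in $\mathscr{D}_E$, and $K\wedge_E A$ is dualizable over $K$; as recalled just before the statement, $K\wedge_E A\simeq\bigvee_i\Sigma^{d(i)}K$ is a finite wedge, so $A_*$ is a finitely generated free $K_*$-module. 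It is non-zero since the unit $K=K\wedge_E E\to K\wedge_E A$ is non-trivial in $\mathscr{D}_K$ (as $A$ carries a unit). Over the graded field $K_*$ this already shows that $A_*$ is a faithful, finitely generated projective $K_*$-module, i.e. a compact generator of $\mathscr{D}_{K_*}$.

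It remains to identify the algebraic $\mu$-map for $A_*$. Smashing the $K$-local equivalence $\mu_A\colon A\wedge_E A^\op\to F_E(A,A)$ of Definition~\ref{defn:AzumAlg-Klocal}(2) with $K$ over $E$ yields an equivalence of $K$-modules. On the source, base change along $E\to K$ is monoidal, so $K\wedge_E(A\wedge_E A^\op)\simeq(K\wedge_E A)\wedge_K(K\wedge_E A)^\op$; since $A_*$ is free over $K_*$ the Künneth spectral sequence collapses and $\pi_*$ of the source is $A_*\otimes_{K_*}A_*^\op$, the Koszul sign in $A_*^\op$ coming from the swap map defining $A^\op$. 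On the target, dualizability of $A$ over $E$ gives $K\wedge_E F_E(A,A)\simeq F_E(A,K\wedge_E A)\simeq F_K(K\wedge_E A,K\wedge_E A)$, whose homotopy is $\Hom_{K_*}(A_*,A_*)$ because $K\wedge_E A$ is $K$-dualizable with free homotopy. Tracing the identifications, $\pi_*(K\wedge_E\mu_A)$ becomes the map $a\otimes b\mapsto(x\mapsto axb)$, which is exactly the algebraic $\mu$-map of the $K_*$-algebra $A_*$, and it is an isomorphism. Together with the first paragraph, $A_*$ satisfies the three conditions for an Azumaya algebra over the graded field $K_*$.

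The main obstacle is the pair of base-change identifications of the source and target of $K\wedge_E\mu_A$: one must check that the (only homotopy-coherent, since $K$ is merely homotopy commutative) strong monoidality of $-\wedge_E K$ suffices to produce the isomorphism $\pi_*(K\wedge_E A\wedge_E A^\op)\cong A_*\otimes_{K_*}A_*^\op$ of graded $K_*$-algebras, with the opposite-algebra structure and all Koszul signs matching the algebraic convention for $\Z/2$-graded Azumaya algebras over $K_*$; it is precisely the oddness of $p$ that makes $K$ homotopy commutative and so makes this whole set-up meaningful. Once those identifications are in place, the verification of the algebraic Azumaya conditions for $A_*$ over $K_*$ is immediate.
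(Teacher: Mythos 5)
Your proposal is correct and follows essentially the same route as the paper: base-change the Azumaya equivalence $\mu_A$ along $E\to K$, identify the source with $\pi_*(K\wedge_E A)\otimes_{K_*}\pi_*(K\wedge_E A)^{\mathrm{o}}$ via a collapsing K\"unneth argument over the graded field $K_*$, and identify the target with $\mathrm{End}_{K_*}(\pi_*(K\wedge_E A))$ using strong dualizability and the universal coefficient spectral sequence, all in the $\Z/2$-graded setting forced by $2$-periodicity. Your extra care about the opposite-algebra Koszul signs and about checking that the induced map really is the algebraic $\mu$-map is detail the paper leaves implicit, but it is the same argument.
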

\begin{proof}
The ring $K_*$ is a $2$-periodic graded field which we will view
as $\Z/2$-graded, and $\pi_*(K\wedge_E A)$ will also be viewed as
a $\Z/2$-graded $K_*$-algebra.

We have isomorphisms of $K_*$-algebras
\begin{align*}
\pi_*(K\wedge_E A)\otimes_{K_*}\pi_*(K\wedge_E A)^\op
 & \iso \pi_*(K\wedge_E A)\otimes_{K_*}\pi_*(K^{\op}\wedge_E A^\op) \\
 & \iso \pi_*(K\wedge_E (A\wedge_E A^\op)) \\
 & \iso \pi_*(K\wedge_E F_E(A,A)).
\end{align*}
Since $A$ and $K$ are strongly dualizable, using results of~\cite{EKMM}
we have
\[
K\wedge_E F_E(A,A) \simeq F_K(K\wedge_EA,K\wedge_E A),
\]
so the universal coefficient spectral sequence over~$K$ yields
\[
\pi_*(K\wedge_E F_E(A,A)) \iso \End_{K_*}(\pi_*(K\wedge_E A)).
\]
Therefore $\pi_*(K\wedge_E A)$ is a $K_*$-Azumaya algebra.
\end{proof}
\begin{cor}\label{cor:KsmashA}
If $\pi_*(K\wedge_E A)$ is concentrated in even degrees then
its dimension is a square, \ie, for some natural number~$m$,
\[
\dim_{K_*}\pi_*(K\wedge_E A) = m^2.
\]
\end{cor}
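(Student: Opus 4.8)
The plan is to reduce the claim to the corresponding classical statement about ungraded Azumaya algebras over a field. By Theorem~\ref{thm:KsmashA}, $\pi_*(K\wedge_E A)$ is a $\Z/2$-graded Azumaya algebra over the $2$-periodic graded field $K_*$. Under the hypothesis that $\pi_*(K\wedge_E A)$ is concentrated in even degrees, the $\Z/2$-grading is trivial, so $\pi_*(K\wedge_E A)$ is honestly an ungraded Azumaya algebra over the (ungraded) field $K_0$, after we pick out the degree-zero part; more precisely, $2$-periodicity of $K_*$ means that $\pi_*(K\wedge_E A)$ is determined by $\pi_0(K\wedge_E A)$, which is an ordinary Azumaya algebra over the field $K_0$.

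First I would make the periodicity reduction explicit: choose an invertible element of $K_2$, which identifies $\pi_{t+2}(K\wedge_E A)\cong\pi_t(K\wedge_E A)$ compatibly with the algebra structure, so that the hypothesis ``concentrated in even degrees'' forces $\pi_*(K\wedge_E A)$ to be a free $K_*$-module on a basis in degree~$0$, with $\pi_0(K\wedge_E A)$ an Azumaya $K_0$-algebra. Then $\dim_{K_*}\pi_*(K\wedge_E A)=\dim_{K_0}\pi_0(K\wedge_E A)$ as $K_*$ is a graded field on one generator in degree~$2$ and everything sits in even degrees. Next I would invoke Wedderburn's theorem for the ordinary central simple algebra $\pi_0(K\wedge_E A)$ over the field $K_0$, exactly as recalled in the paragraph preceding Theorem~\ref{thm:KsmashA}: one has $\pi_0(K\wedge_E A)\iso M_r(D)$ for a central division algebra $D$ over $K_0$, whence $\dim_{K_0}\pi_0(K\wedge_E A)=(r\cdot\dim_{K_0}D)^2$, a perfect square. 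Setting $m=r\cdot\dim_{K_0}D$ gives $\dim_{K_*}\pi_*(K\wedge_E A)=m^2$.

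The main thing to be careful about — and really the only non-formal point — is that the ungraded Wedderburn structure theorem genuinely applies, which requires $K_0$ to be a field (not merely a graded field) and $\pi_0(K\wedge_E A)$ to be finite-dimensional over it. Finite-dimensionality is immediate since $A$ is dualizable as a $K$-local $E$-module, so $\pi_*(K\wedge_E A)$ is finite over $K_*$; and $K_0$ is a genuine field because $K_*$ is a graded field concentrated in even degrees, hence $K_0$ is a (commutative, in the odd-prime case) field. The oddness of $p$ is what guarantees $K_*$ is graded-commutative with the even part commutative, so the Brauer-theoretic input is available. I expect no other obstacle: once the grading is seen to be trivial, this is a one-line consequence of classical central simple algebra theory, which is why the corollary is stated without proof or with only a brief indication.
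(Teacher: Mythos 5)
Your argument is correct and is exactly the one the paper intends: the corollary is stated without proof, but the paragraph preceding Theorem~\ref{thm:KsmashA} sets up precisely this reduction to the ungraded Wedderburn theorem, and your use of the periodicity element to identify $\dim_{K_*}\pi_*(K\wedge_E A)$ with $\dim_{K_0}\pi_0(K\wedge_E A)$ for the honest field $K_0$ supplies the only detail left implicit. (One harmless slip, inherited from the paper's own display: $\dim_{K_0}M_r(D)=r^2\dim_{K_0}D=(re)^2$ where $\dim_{K_0}D=e^2$, not $\left(r\cdot\dim_{K_0}D\right)^2$; the conclusion that the dimension is a perfect square is unaffected.)
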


In fact we have
\begin{prop}\label{prop:K*A-even}
If $\pi_*(K\wedge_E A)$ is concentrated in even degrees then
$\pi_*(A)$ is a $\Z/2$-graded algebra Azumaya algebra over
$E_0$. In particular, as an $E$-module $A$ is equivalent to
a wedge of $m^2$ copies of $E$, where
\[
m^2 = \dim_{K_*}\pi_*(K\wedge_E A) = \rank_{E_*} \pi_*(A).
\]
\end{prop}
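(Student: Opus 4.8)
The plan is to reduce the proposition to a single module-theoretic fact: \emph{under the hypothesis, $\pi_*(A)$ is a free $E_*$-module concentrated in even degrees, of rank $m^2:=\dim_{K_*}\pi_*(K\wedge_E A)$}, this number being a perfect square by Corollary~\ref{cor:KsmashA} applied to the $K_*$-Azumaya algebra $\pi_*(K\wedge_E A)$ of Theorem~\ref{thm:KsmashA}. Granting this, $\pi_*(A)\cong(E_*)^{m^2}$, and $2$-periodicity of $E_*$ lets us choose a homogeneous basis in degree $0$, giving an equivalence of $E$-modules $\bigvee^{m^2}E\xrightarrow{\ \simeq\ }A$. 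Then $A\wedge_E A^\op$ and $F_E(A,A)$ are themselves finite wedges of copies of $E$, so the Künneth and universal coefficient spectral sequences over $E$ degenerate and give $\pi_0(A\wedge_E A^\op)\cong\pi_0(A)\otimes_{E_0}\pi_0(A)^\op$, $\pi_0(F_E(A,A))\cong\End_{E_0}(\pi_0(A))$, with $\pi_0(\mu)$ the canonical algebra map $\pi_0(A)\otimes_{E_0}\pi_0(A)^\op\to\End_{E_0}(\pi_0(A))$. Since $\mu$ is a weak equivalence this is an isomorphism; and $\pi_0(A)$ is a free $E_0$-module of positive rank $m^2$ (positive because $A$ is faithful over $E$), hence faithfully projective. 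Therefore $\pi_0(A)$ is an Azumaya $E_0$-algebra, and since $\pi_*(A)=\pi_0(A)\otimes_{E_0}E_*$ is concentrated in even degrees its $\Z/2$-grading is trivial, proving the first assertion; the equalities $m^2=\dim_{K_*}\pi_*(K\wedge_E A)=\rank_{E_*}\pi_*(A)$ are then read off directly.

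Everything therefore hinges on the freeness of $\pi_*(A)$, which I would prove by descending induction along the residue-field tower. Write $\mathfrak{m}=(v_1,\dots,v_n)$ for the maximal ideal of $E_*$, generated by a regular sequence of degree-$0$ elements, realise the residue field as the iterated cofibre $K\simeq(E/v_1/\cdots/v_n)$, and set $M^{(i)}=(E/v_1/\cdots/v_i)\wedge_E A$, so $M^{(0)}=A$, $M^{(n)}\simeq K\wedge_E A$, and $M^{(i)}\simeq\mathrm{cofib}\bigl(M^{(i-1)}\xrightarrow{\,v_i\,}M^{(i-1)}\bigr)$ with $v_i$ of degree $0$. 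As $A$ is dualizable over $E$ it is a retract of a finite cell $E$-module by~\cite[section~7]{AB&BR:E_n^nr}, so each $N^{(i)}:=\pi_*(M^{(i)})$ is a finitely generated graded module over the $2$-periodic Noetherian graded-local ring $E^{(i)}_*:=E_*/(v_1,\dots,v_i)$, whose residue field is $K_*$. I claim each $N^{(i)}$ is concentrated in even degrees and free over $E^{(i)}_*$. For $i=n$ this is the hypothesis together with the fact that $K_*$ is a graded field.

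For the inductive step, the cofibre sequence gives short exact sequences $0\to N^{(i-1)}_k/v_i\to N^{(i)}_k\to(N^{(i-1)}_{k-1})[v_i]\to0$ in each degree $k$, where $[v_i]$ is the $v_i$-torsion submodule. Assume $N^{(i)}$ is even. For $k$ odd the middle term vanishes, so $v_i$ acts surjectively on the odd part $N^{(i-1)}_{\mathrm{odd}}$, a finitely generated $E^{(i-1)}_*$-submodule; as $v_i$ lies in the maximal ideal, Nakayama's lemma (over the local ring $E^{(i-1)}_0$) forces $N^{(i-1)}_{\mathrm{odd}}=0$, so $N^{(i-1)}$ is even. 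For $k$ odd the same sequences then read $0\to0\to0\to(N^{(i-1)}_{k-1})[v_i]\to0$, so $v_i$ is a non-zero-divisor on $N^{(i-1)}$ and the sequences collapse to $N^{(i-1)}/v_i\cong N^{(i)}$, a free $E^{(i-1)}_*/v_i$-module. Lifting a homogeneous basis to $N^{(i-1)}$ gives a map from a free $E^{(i-1)}_*$-module that is surjective by Nakayama and injective because $\Tor_1^{E^{(i-1)}_*}(E^{(i-1)}_*/v_i,N^{(i-1)})=0$ (as $v_i$ is a non-zero-divisor on $N^{(i-1)}$); hence $N^{(i-1)}$ is free. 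Taking $i=1$ finishes the induction, and telescoping the isomorphisms $N^{(i-1)}/v_i\cong N^{(i)}$ yields $\pi_*(A)/\mathfrak{m}\cong\pi_*(K\wedge_E A)$, so the rank equals $m^2$.

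The main obstacle is this freeness argument. The delicate points are the parity bookkeeping in the long exact sequences---first using $N^{(i)}$ even to kill the odd part of $N^{(i-1)}$ by Nakayama, then using evenness of $N^{(i-1)}$ to rule out $v_i$-torsion---and the standard but essential identification of the residue field as the iterated cofibre of a regular sequence, so that $M^{(i)}\simeq M^{(i-1)}/v_i$; finite generation of the $N^{(i)}$, which makes Nakayama available, is exactly where dualizability enters via~\cite[section~7]{AB&BR:E_n^nr}. Once freeness and evenness of $\pi_*(A)$ are in hand, the identification $A\simeq\bigvee^{m^2}E$, the recognition of $\pi_0(\mu)$ as the enveloping-algebra map, and the Azumaya and rank conclusions are all formal, relying only on the vanishing of higher $\Tor$ and $\Ext$ over $E_*$ for free modules.
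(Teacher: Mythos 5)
Your proof is correct and follows the same overall strategy as the paper's: once one knows that $\pi_*(A)$ is finitely generated, free over $E_*$ and concentrated in even degrees, the Azumaya property is read off from the collapsing K\"unneth and universal coefficient spectral sequences exactly as you do. The only difference is that the paper simply cites \cite{AB&BR:E_n^nr} (section~7 and the proof of theorem~5.1) for that freeness/evenness input, whereas you reprove it from scratch by the standard Nakayama induction along the residue tower $E/v_1/\cdots/v_i$ --- which is essentially the argument of the cited reference, so your write-up is a self-contained version of the same proof rather than a genuinely different route.
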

\begin{proof}
By \cite{AB&BR:E_n^nr} (see section~7 and the proof of theorem~5.1),
the $E_*$-module $\pi_*(A)$ is finitely generated, free and
concentrated in even degrees, hence
\[
\pi_*(A)\otimes_{E_*}\pi_*(A)^\op \iso \pi_*(A\wedge_E A^\op)
   \iso \pi_*(F_E(A,A)) \iso \Hom_{E_*}(\pi_*(A),\pi_*(A)),
\]
where the last isomorphism follows from the collapsing of the
universal coefficient spectral sequence.
\end{proof}

Recall that $\Az_K(E)$ is the collection of all cofibrant
$K$-local topological Azumaya algebras over $E$. The Brauer
equivalence relation $\approx$ on $\Az_K(E)$ is then given
as follows:
\begin{itemize}
\item
If $A,B\in\Az_K(E)$, then $A\approx B$ if and only if there
are faithful, dualizable, cofibrant $E$-modules $U,V$ for
which there is an equivalence in the derived category of
$K$-local $E$-algebras
\[
A\wedge_E F_E(U,U)\simeq B\wedge_E F_E(V,V).
\]
\end{itemize}
The set of equivalence classes of $\approx$ is $\Br_K(E)$;
this is indeed a set since every dualizable $K$-local $E$-module
is a retract of a finite cell $E$-module.

\section{Some examples of $K_n$-local Azumaya algebras}
\label{sec:K-localAzumaya}

We now recall Proposition~\ref{prop:Saltman-6.11}. By work of
Devinatz and Hopkins~\cite{ED&MH:HtpyFixPts}, and subsequently
Davis~\cite{DD:iterated}, as explained
in~\cite[theorem~5.4.4]{JR:Opusmagnus}, for each pair of closed
subgroups
\begin{equation*}
H\leq G\leq\mathbb{G}_n =
                \Gal(\F_{p^n}/\F_p)\ltimes\mathbb{S}_n
\end{equation*}
of the Morava stabilizer group, there is an associated pair
of homotopy fixed point spectra $E^{hG}\lra E^{hH}$, and if
$H\lhd G$ then this is a $K$-local $G/H$-Galois extension.
In particular, when $H\leq \mathbb{G}_n$ is finite,
$E^{hH}\lra E$ is a $K$-local $H$-Galois extension.

A particularly interesting source of examples is provided by
taking $G$ to be a maximal finite subgroup of $\mathbb{G}_n$.
If $p$ is odd and $n=(p-1)k$ with $p\nmid k$, or $p=2$ and
$n=2k$ with $k$ odd, then such maximal subgroups are unique
up to conjugation and then the homotopy fixed point spectrum
$E^{hG}$ is denoted $EO_n$.
For $p=3$ Behrens~\cite[remark~1.7.3]{Be} gives an argument
for the identification of $EO_2$ with the $K(2)$-localization
of the spectrum of topological modular forms, $\mathit{TMF}$.
This can be adapted to $p=2$. We proceed with an example,
studied in~\cite[section~5.4.3]{JR:Opusmagnus}.

\begin{ex}\label{ex:EO2}
At the prime $p=2$, the group $\mathbb{G}_2$ has a maximal
finite subgroup $G_{48}$ of order~$48$ which is isomorphic
to a semi-direct product of the group $HQ_{24}$ of order $24$,
which consists of the units in the ring of Hurwitz quaternions,
with the Galois group $\Gal(\F_4/\F_2)$ of order~$2$. Therefore
$E_2/EO_2$ is a $G_{48}$-Galois extension. Applying
Proposition~\ref{prop:Saltman-6.11}, we see that the splitting
of the group $G_{48}$ yields an Azumaya algebra
\begin{equation*}
(E_2\< HQ_{24}\>)^{hC_2}
\end{equation*}
over $EO_2$. There are two more examples like this: Of the $15$
conjugacy classes of subgroups of~$G_{48}$, there are $7$ which
are normal: $1$, $C_2$, $C_4$, $Q_8$, $\Syl_2(G_{48})$, $HQ_{24}$,
and $G_{48}$. The first and last of them are uninteresting here,
as $(E_2)^{hG_{48}}\simeq EO_2$ and $E_2\<G_{48}\>\simeq F_{EO_2}(E_2,E_2)$
are trivial in the Brauer group of $EO_2$. The second and third
do not split the group, but the other three do. The last of these
yields the Azumaya algebra displayed above, but the other two give
rise to further examples
\begin{equation*}
(E_2\< Q_8\>)^{hC_6}
\quad
(E_2\< \Syl_2(G_{48})\>)^{hC_3}
\end{equation*}
of Azumaya algebras over $EO_2$.
\end{ex}

\section{The Brauer group of the $K(n)$-local
  sphere}\label{sec:Br(SK)}

In this section we discuss the $K(n)$-local Brauer group of the
$K(n)$-local sphere $L_{K(n)}S$.
\begin{thm}\label{thm:Br(SK)}
Suppose that\/ $p>2$ and\/ $n>1$. Then the $K(n)$-local Brauer
group of $L_{K(n)}S$ is non-trivial.
\end{thm}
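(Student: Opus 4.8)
The plan is to construct an explicit non-trivial element of $\Br_{K(n)}(L_{K(n)}S)$ by exploiting the Galois-theoretic machinery of Sections~\ref{sec:Gal->Azumaya}--\ref{sec:K-localAzumaya} together with the detecting homomorphism supplied by Theorem~\ref{thm:KsmashA}. The key observation is that $L_{K(n)}S$ sits underneath the Lubin--Tate tower: one has a $K(n)$-local $\mathbb{G}_n$-Galois extension $L_{K(n)}S \to E_n$ (in the pro-finite sense), and more usefully a $K(n)$-local $G$-Galois extension $E_n^{hG}\to E_n$ for finite $G\leq\mathbb{G}_n$, with $L_{K(n)}S = E_n^{h\mathbb{G}_n}$ mapping into $E_n^{hG}$. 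So the strategy is: (1) pick a finite subgroup $G\leq\mathbb{G}_n$ containing a normal subgroup $H$ that is \emph{not} a semidirect-product complement's kernel in a way making the twisted group algebra split trivially --- concretely, apply Proposition~\ref{prop:Saltman-6.11} to the tower $L_{K(n)}S = E_n^{h\mathbb{G}_n}\to E_n^{hG}$ with an appropriate $H\lhd K\leq\mathbb{G}_n$ to produce a $K(n)$-local Azumaya algebra over $L_{K(n)}S$; (2) show this algebra is non-trivial in $\Br_{K(n)}(L_{K(n)}S)$ by smashing with $K_n$ (or with $E_n$) and invoking Theorem~\ref{thm:KsmashA} to land in the classical graded Brauer group $\Br(K_{n*})$, where non-triviality can be checked by a homotopy-group computation as sketched after Theorem~\ref{thm:CyclicAlg-splits}.

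\textbf{Step 1 (construction).} Since $p>2$ and $n>1$, the Morava stabilizer group $\mathbb{G}_n$ contains interesting finite subgroups; the cleanest input is a finite subgroup $G$ with a normal subgroup $H$ for which the conjugation action of $Q=G/H$ on $H$ (when $G=Q\ltimes H$, or when $H$ is abelian) makes $B\<H_c\>^{hQ}$ Azumaya over $B^{hG}$ via Proposition~\ref{prop:Saltman-6.11}, where $B=E_n^{hH'}$ for a suitable $H'$ so that $B^{hG}=L_{K(n)}S$. In fact the simplest route is to take $H\lhd\mathbb{G}_n$ finite with an element of order $p$ or with a non-trivial $p$-part in a relevant quotient; but since $p>2$ and $n>1$ we have $p-1\mid n$ possible, so $\mathbb{G}_n$ has a subgroup of order $p$, and one can build a cyclic algebra $A(L,\sigma,u)$ over $L_{K(n)}S$ via Section~\ref{sec:cyclic} using a $K(n)$-local $\Z/p$-Galois extension $L_{K(n)}S\to L$ coming from the Lubin--Tate tower (the unit $u$ being furnished by a power of the Bott-type periodicity class, suitably $K(n)$-localized). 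Theorem~\ref{thm:CyclicAlg-splits} then gives that $A(L,\sigma,u)$ is $K(n)$-locally Azumaya over $L_{K(n)}S$ and split over $L$.

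\textbf{Step 2 (non-triviality).} To see $[A]\neq 0$, base-change along $L_{K(n)}S\to E_n$ and then smash with the residue field $K_n$; by Theorem~\ref{thm:KsmashA}, $\pi_*(K_n\wedge_{E_n}(E_n\wedge_{L_{K(n)}S}A))$ is a $\Z/2$-graded Azumaya algebra over $K_{n*}$. Now compute these homotopy groups using the homotopy fixed point spectral sequence \eqref{eqn:KU-Quat-MS-LixPtSS} (or its evident analogue for the chosen $G$): since the relevant $\pi_t$-modules are induced over the acting group, the $E_2$-page is concentrated on the $0$-line and one reads off $\pi_*$ additively, and then --- crucially --- the \emph{multiplicative} structure, governed by the unit $u$ together with the sign coming from the $\Z/2$-grading, gives a non-trivial graded central simple algebra over $K_{n*}$ (e.g.\ a graded quaternion-type algebra whose class in $\Br(K_{n*})$ is detected because $K_{n*}$ is only a graded field, not a field, so the graded Brauer group $\Br^{w}(K_{n*})$ or its $\Z/2$-graded variant in the sense of~\cite{CGO,Wall} is non-trivial). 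One must argue that this graded Brauer class is non-zero: here the condition $n>1$ and $p>2$ enters to guarantee enough room in $K_{n*}=\Fpi[u^{\pm1}]$-type coefficient rings, and one can appeal to the graded Wedderburn theory of~\cite{CGO,Wall} or simply exhibit that no graded module $V$ with $\pi_*(K_n\wedge A)\iso\End_{K_{n*}}(V)$ exists.

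\textbf{Main obstacle.} The genuinely hard part is \textbf{not} the construction but verifying non-triviality in the \emph{$K(n)$-local} Brauer group rather than merely over $K_{n*}$: a priori the class could become trivial already over $L_{K(n)}S$ even if it is non-trivial over $K_{n*}$ only if the detecting map $\Br_{K(n)}(L_{K(n)}S)\to\Br(K_{n*})$ were not well-defined --- but it is, by Theorem~\ref{thm:KsmashA} applied with $E=L_{K(n)}S$ (or factored through $E_n$), so the real subtlety is showing the target class is genuinely non-zero in the \emph{graded} Brauer group $\Br^{w}(K_{n*})$, which requires care with the $\Z/2$-graded sign conventions of~\cite{CGO,Wall} and an honest (small) computation of the algebra structure on $\pi_*(K_n\wedge_{E_n} E_n\wedge_{L_{K(n)}S}A)$. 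A secondary point requiring attention is confirming that the chosen extension $L_{K(n)}S\to L$ (or $E_n^{hG}\to E_n^{hH}$) is genuinely \emph{faithful} as a $K(n)$-local module extension so that Propositions~\ref{prop:Azumaya-fixedpts} and~\ref{prop:Saltman-6.11} apply; this follows from the discussion at the start of Section~\ref{sec:Azumaya-LT} (every $K(n)$-locally non-trivial $E$-module is faithful), but it is the step one should not skip.
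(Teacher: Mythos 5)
Your overall strategy --- produce a descent-type Azumaya algebra over $L_{K(n)}S$ via Proposition~\ref{prop:Saltman-6.11} and then detect it --- matches the paper's in outline, but both halves have genuine gaps. First, the construction: an order-$p$ subgroup of $\mathbb{G}_n$ exists only when $(p-1)\mid n$, so your cyclic-algebra route through a $K(n)$-local $\Z/p$-Galois extension does not cover all $p>2$, $n>1$ (for instance $p=5$, $n=2$); you also leave the required \emph{strict} unit $u$ unspecified, and ``a power of the Bott-type periodicity class'' does not live in $\pi_0$. The paper instead uses prime-to-$p$ structure that is always present: writing $q=p^n$, the reduced determinant and the Teichm\"uller character give a split surjection $\mathbb{G}_n\to F=G\ltimes C$ with $C=\F_q^\times$ cyclic of order $q-1$ and $G=\Gal(\F_q/\F_p)$ cyclic of order $n>1$, and Proposition~\ref{prop:Saltman-6.11} applied to this datum yields the algebra $A=(E_n^{hN}\<C\>)^{hG}$ over $L_{K(n)}S$, where $N=\ker(\mathbb{G}_n\to F)$.

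Second, and more seriously, your detection step would fail. Any algebra produced by Proposition~\ref{prop:Saltman-6.11} (or by Theorem~\ref{thm:CyclicAlg-splits}) is \emph{split} by the Galois extension used to build it, so its class dies after base change up the tower: the paper notes explicitly that $[A]$ vanishes in $\Br_{K(n)}(E_n)$. Consequently $\pi_*(K_n\wedge_{E_n}E_n\wedge_{L_{K(n)}S}A)$ is a matrix algebra over $K_{n*}$ and its class in the graded Brauer group is zero; Theorem~\ref{thm:KsmashA} does give a well-defined invariant, but one that is identically zero on every class you could construct this way. (Invoking Theorem~\ref{thm:KsmashA} ``with $E=L_{K(n)}S$'' is also out of scope, since that theorem is stated for Lubin--Tate spectra and their residue fields.) The paper's detection is of a different nature: it pushes $[A]$ only to the intermediate stage $\Br_{K(n)}(E_n^{hF})$, where the image is $(E_n\<C\>)^{hG}$, assumes an equivalence with some $F_{E_n^{hF}}(W,W)$, and derives a contradiction by comparing centers of $\pi_0(-)\otimes\Q$: the endomorphism algebra has center $\pi_0(E_n^{hF})\otimes\Q\cong\Q_p[\![u_1,\dots,u_{n-1}]\!]$, whereas --- because $p$ is odd, so the summands of $E_n\<C\>$ indexed by $1$ and $-1$ in $C$ are both $G$-invariant --- $\pi_0((E_n\<C\>)^{hG})\otimes\Q$ is strictly larger. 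You would need to replace your Step~2 with an argument of this kind carried out \emph{below} the splitting extension.
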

\begin{proof}
As usual, let us write $q=p^n$. The cyclic group $C=\F_q^\times$
of order $q-1$ consists of roots of unity, and the Galois
group $G=\Gal(\F_q/F_p)$ is cyclic of order~$n$, generated by the
Frobenius. As $n\neq1$, the Galois group is non-trivial, and its
Galois action gives rise to an extension $F=G\ltimes C$.

Let $\mathbb{G}_n$ denote again the $n$-th extended Morava stabilizer
group. We refer to section~2.3 and the appendix of~\cite{Henn} for
the following facts. The reduction of the determinant gives rise
to a surjection $\mathbb{G}_n\to F$, a splitting of which is induced
by the Teichm\"uller character $\F_q^\times\lra\mathbb{W}\F_q^\times$.
Consequently, if we write $N$ for the kernel of that surjection, then
there is an isomorphism $\mathbb{G}_n\cong F\ltimes N$.

We will now invoke Proposition~\ref{prop:Saltman-6.11} in order
to get an Azumaya algebra
\begin{equation*}
A = (E_n^{hN}\< C\>)^{hG}
\end{equation*}
over $L_{K(n)}S$. That result also implies that the image of $[A]$
automatically maps to zero in the local Brauer group of $E_n^{hN}$.
In particular,
it vanishes in $\Br_{K(n)}(E_n)$ itself. It remains to show that
$[A]\neq0$ in the local Brauer group of $L_{K(n)}S$. In particular,
it suffices to prove that its image in  $\Br_{K(n)}(E_n^{hF})$ is
non-zero. That image is equivalent to $(E_n\< C\>)^{hG}$.

We assume on the contrary that there were an equivalence between
our example $(E_n\<C\>)^{hG}$ and $F_{E_n^{hF}}(W,W)$ for some
faithful, dualizable, cofibrant $E_n^{hF}$-module~$W$. We get a
contradiction by looking at the centers of $\pi_0\otimes\Q$ for
both algebras.

The center of $\pi_0(F_{E_n^{hF}}(W,W))\otimes\Q$ is just
\begin{equation*}
\pi_0(E_n^{hF})\otimes\Q\cong\pi_0(E_n\otimes\Q)^{F}.
\end{equation*}
As the group of roots of unity $C$ acts only on the grading, and
the Galois group $G$ acts only on the coefficients, this is isomorphic
to $\Q_p[\![u_1,\dots,u_{n-1}]\!]$. However, as~$p$ is odd, the
$G$-action on $E_n\<C\>$ not only leaves the summand corresponding
to the root~$1$ in $C$ invariant, but also the one corresponding
to the unique element $-1$ of order $2$ in $C$. As a consequence,
$\pi_0((E_n\< C\>)^{hG})\otimes\Q$ contains more than
$\Q_p[\![u_1,\dots,u_{n-1}]\!]$.
\end{proof}
Summarizing, we know that localizations of the sphere (at an ordinary
prime or at Morava-$K$-theory) possess non-trivial Brauer groups,
compare Remark~\ref{rem:localspheres}. However, we conjectured that
the Brauer group of the (global) sphere spectrum is trivial.
Antieau-Gepner's \cite{AG}, Gepner-Lawson's~\cite{GL} and To\"en's~\cite{To}
results that relate Brauer groups of commutative $S$-algebras to
\'etale cohomology groups, allow to prove this conjecture, so
\begin{equation*}
\Br(S) = 0.
\end{equation*}

\end{document}